\newtheorem{theorem}{Theorem}[section]
\newtheorem{proposition}[theorem]{Proposition}
\newtheorem{lemma}[theorem]{Lemma}
\newtheorem{remark}[theorem]{Remark}
\newtheorem{corollary}[theorem]{Corollary}
\newcommand{\cal}{\mathcal}
\newcommand{\gbar}{\overline{g}}
\newcommand{\gtil}{\widetilde{g}}
\newcommand{\bS}{\mathbb{S}}
\newcommand{\bN}{\mathbb{N}}
\newcommand{\sE}{\mathcal{E}}
\DeclareMathOperator{\inj}{inj}
\DeclareMathOperator{\conj}{conj}
\begin{document}

\title[Normalized Ricci flow and rotational symmetry]{Asymptotically hyperbolic normalized Ricci flow and rotational symmetry}
\keywords{Ricci flow, conformally compact metrics, asymptotically
hyperbolic metrics, rotationally symmetric metrics.}

\author{Eric Bahuaud}
\address{Department of Mathematics,
Seattle University,
901 12th Ave,
Seattle, WA 98122, United States}

\email{bahuaude(at)seattleu.edu}

\author{Eric Woolgar}
\address{Dept. of Mathematical and Statistical Sciences, and Theoretical Physics Institute, University of Alberta, Edmonton, AB, Canada T6G 2G1.}

\email{ewoolgar(at)ualberta.ca}

\date{\today}

\begin{abstract}
\noindent We consider the normalized Ricci flow evolving from an initial metric which is conformally compactifiable and asymptotically hyperbolic. We show that there is a unique evolving metric which remains in this class, and that the flow exists up to the time where the norm of the Riemann tensor diverges. Restricting to initial metrics which belong to this class and are rotationally symmetric, we prove that if the sectional curvature in planes tangent to the orbits of symmetry is initially nonpositive, the flow starting from such an initial metric exists for all time. Moreover, if the sectional curvature in planes tangent to these orbits is initially negative, the flow converges at an exponential rate to standard hyperbolic space. This restriction on sectional curvature automatically rules out initial data admitting a minimal hypersphere.
\end{abstract}

\maketitle

\section{Introduction}
\setcounter{equation}{0}

\noindent Conformal compactification techniques have proved to be very helpful in general relativity, where they were pioneered by Roger Penrose \cite{Penrose}. Subsequently they were also found to facilitate the study of Riemannian manifolds with asymptotic ends, especially asymptotically hyperbolic manifolds, on which the sectional curvatures approach $-1$ at infinity. The most important case is that of complete \emph{Poincar\'e-Einstein} manifolds. A Poincar\'e-Einstein manifold is one whose metric is conformally compactifiable and Einstein. These manifolds were studied in a seminal paper of Fefferman and Graham \cite{FG} and play an important role in the AdS/CFT correspondence of string-theoretic physics. This line of enquiry has led to important advances in conformal geometry, yielding hitherto unknown invariants for the conformal metric carried on the boundary-at-infinity associated to the conformal compactification.

There are many unanswered and tantalizing questions about Poincar\'e-Einstein (PE) manifolds and the related but more general class of \emph{asymptotically Poincar\'e-Einstein} (APE) manifolds \cite{BMW}. For example, can one construct a non-constant curvature PE manifold whose conformal infinity is a connected, compact hyperbolic space? Of particular interest for physics, can one construct an APE metric with this conformal infinity which obeys the so-called \emph{static Einstein equations} with non-constant lapse function?  The existence of such metrics is not even confirmed, and an analytic construction would likely be difficult.

It is thus conceivable that the detailed study of these metrics will proceed mostly by numerical techniques.\footnote
{A similar situation arises for Calabi-Yau 3-folds; see \cite{Douglas} for a discussion.}
Figueras and Wiseman \cite{FW} have pursued the numerical construction of Poincar\'e-Einstein metrics using a relaxation method based on the Ricci flow (see \cite{Wiseman} for an overview), suitably normalized to account for sectional curvatures which asymptote to $-1$. These numerical results have already had significant impact in the physics of the Randall-Sundrum model. Yet the rigorous analytical analysis of the underlying flow technique is not well-developed. While the relevant physics may be too complicated to study effectively without numerics, the successful use of the flow technique in this numerical approach and the important physics conclusions which follow from it provide compelling reasons for mathematicians to study the long-time existence and convergence behaviour of the flow in those settings which are simple enough to address analytically.

The flow in question is the \emph{normalized Ricci flow} (NRF) on an $n$-manifold $({\cal M},g)$, given by the evolution equation
\begin{equation}
\label{eq1.1}
\frac{\partial g}{\partial t} = -2 \left ( {\rm Ric}(g) + (n-1)g \right ) =: -2E(g)\ .
\end{equation}
The analysis of this flow was initiated in \cite{Bahuaud}, wherein the short-time existence in the asymptotically hyperbolic category was proved. Qing, Shi, and Wu \cite{QSW} independently studied this flow, and in addition to the short-time existence also obtained long-time existence if the initial curvature was sufficiently pinched.

This raises the question of long-time existence for ``large data''; i.e., when the curvature pinching is removed. Obviously this is a very difficult question in general, but ought to be tractable under conditions of high symmetry at least. Apart from its mathematical interest, there is already much to be explained in the numerical work discussed above, which rigorous convergence results could address. In the present paper we study long-time existence (and convergence) of this flow for the simplest nontrivial case of a complete metric evolving according to equation (\ref{eq1.1}), namely that which starts from initial data that are rotationally symmetric and asymptotically hyperbolic.

Our approach is to study the long-time existence of the flow (\ref{eq1.1}) for rotationally symmetric metrics that asymptotically hyperbolic in a weak sense.  These metrics have curvatures approaching $-1$ near infinity but require no additional smoothness of a conformal compactification.

We begin by considering the general short-time existence of solutions of the normalized Ricci flow within the asymptotically hyperbolic category, without any symmetry assumptions. It has been known since the work of WX Shi \cite{Shi} that for given smooth initial data on a complete manifold there is a unique Ricci flow evolution of this data for an interval of time $0\le t <T$ (for some $T\in (0,\infty ]$) such that the curvature is bounded (by a $T$-dependent bound), a result that also applies to normalized Ricci flow. In Section 2 we collect a number of facts that imply, if the initial data are conformally compact and asymptotically hyperbolic, that there is a unique asymptotically hyperbolic evolution $g(t)$ under normalized Ricci flow on this time interval. We show that the asymptotically hyperbolic evolution can be continued to $t=T$ and beyond unless the norm of the Riemann curvature diverges as $t\nearrow T$. Let $K$ be the type-$(0,4)$ algebraic curvature tensor
\begin{equation}
\label{eq1.2}
K_{ijkl} = g_{il} g_{jk} - g_{ik}g_{jl}.
\end{equation}
We prove

\begin{theorem}[AH existence and unique continuation]
\label{theorem1.1}
Suppose that $g_0$ is smoothly conformally compact and asymptotically hyperbolic. Then the normalized Ricci flow has a unique solution which remains conformally compact and asymptotically hyperbolic in $[0,T_M)$. If $T_M <  \infty$, then
\begin{equation}
\label{eq1.3}
\limsup_{t \nearrow T_M} \sup_{p \in {\cal M}} | {\rm Rm} +K |_g = \infty.
\end{equation}
\end{theorem}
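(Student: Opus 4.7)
The plan is to split the theorem into short-time existence and uniqueness within the AH class (on one hand), and the continuation criterion \eqref{eq1.3} (on the other). For the short-time part I would simply assemble the results of Bahuaud~\cite{Bahuaud} and Qing--Shi--Wu~\cite{QSW}, which produce an AH solution on some maximal interval $[0,T_M)$, together with Shi's uniqueness theorem~\cite{Shi} for complete bounded-curvature Ricci flows, which yields uniqueness within the AH class once one observes that an AH metric has bounded curvature on each slice.

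The substantive step is the blow-up criterion. Assume $T_M<\infty$ and, for contradiction, that $|\mathrm{Rm}+K|_g$ is bounded by some $C_0$ throughout $[0,T_M)\times{\cal M}$. A direct computation in any orthonormal frame gives $|K|_g^{2}=2n(n-1)$, a dimensional constant, so $|\mathrm{Rm}|_g$ is uniformly bounded. Moreover, $E(g)=\mathrm{Ric}(g)+(n-1)g$ arises as the trace $g^{il}(\mathrm{Rm}+K)_{ijkl}$, so the assumed bound translates directly to a uniform bound on $|\partial_t g|_g$. Integrating the flow equation, the metrics $g(t)$ are pairwise uniformly equivalent to $g_0$ and converge in $C^{0}$ as $t\nearrow T_M$. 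Applying Shi's higher-order interior derivative estimates, uniformly on $[\tau,T_M)$ for each $\tau>0$, bounds all covariant derivatives of $\mathrm{Rm}$ and upgrades the convergence to $C^\infty_{\mathrm{loc}}$, producing a smooth complete limiting metric $g(T_M)$.

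The main difficulty, and where I would expect the bulk of the technical work, is showing that this limit is still smoothly conformally compact and AH. My approach is to fix a boundary defining function $r$ for $g_0$ and work in a collar neighborhood of the boundary, studying the compactified family $\bar g(t):=r^{2}g(t)$. The quantity $\mathrm{Rm}+K$ is the right object to control because it is exactly the obstruction to $g(t)$ being hyperbolic, and in particular it must decay at infinity for an AH metric; together with Shi's higher-derivative estimates, this suggests that $\bar g(t)$ remains comparable to $\bar g(0)$ on the collar and has uniformly bounded derivatives of all orders up to the boundary, yielding a smooth limit $\bar g(T_M)=r^{2}g(T_M)$ with $g(T_M)$ still AH. Verifying this transfer from $|\mathrm{Rm}+K|_g$-type bounds to compactified regularity of $\bar g(t)$ is the genuine content of the proof, and I would probably adapt the gauge/collar techniques used in the short-time existence argument.

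Once $g(T_M)$ is shown to be AH, the short-time existence result applied at $g(T_M)$ extends the solution past $T_M$ within the AH class; uniqueness glues this extension to the original flow and contradicts the maximality of $T_M$. The only real obstacle I foresee is this boundary-regularity step of propagating the AH structure all the way to $T_M$; the remaining steps are parabolic regularity together with the invocation of previously established short-time and uniqueness results.
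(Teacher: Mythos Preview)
Your overall strategy is reasonable but differs from the paper's in a way that matters for the hard step you correctly flagged. Two points:

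\emph{Logical structure.} In the paper, $T_M$ is by definition the maximal time for Shi's complete bounded-curvature flow, and the blow-up criterion~\eqref{eq1.3} is then immediate from the standard continuation principle (Theorem~\ref{theorem2.5}). What actually has to be proved is that the maximal AH time $T_1$ equals $T_M$; the contradiction is obtained by assuming $T_1<T_M$ and extending the AH structure to $[0,T_1]$. You reverse this: you take $T_M$ to be the maximal AH time and try to build an AH limit at $T_M$ and restart. These are equivalent in the end, but the paper's framing cleanly separates ``stay AH as long as curvature is bounded'' (the new content) from ``bounded curvature $\Rightarrow$ extendible'' (standard).

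\emph{The boundary-regularity step.} Here the approaches genuinely diverge. You propose to show directly that $\bar g(t)=r^2 g(t)$ has uniform regularity up to the boundary, using the $|{\rm Rm}+K|$ bound and Shi's derivative estimates. The difficulty is that Shi's estimates are interior (with respect to the complete metric $g$) and do not by themselves give smoothness of $\bar g$ at $\partial M$; and the NRF itself is not strictly parabolic, so there is no direct boundary parabolic regularity to appeal to. The paper sidesteps this entirely: on $[0,T_1]$ the Shi flow has bounded curvature, so by Chen--Zhu one solves the coupled harmonic map heat flow and obtains the Ricci--DeTurck flow $\tilde g(t)$ on $[0,T_1]$ with uniform control of curvature and all derivatives. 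The regularity theory of \cite[Section~5]{Bahuaud} then says $\tilde g(t)$ is smoothly conformally compact AH on the closed interval, and pulling back by the DeTurck diffeomorphism gives the AH extension of the NRF. In other words, the paper reuses the gauge-fixed boundary regularity already established for short-time existence, rather than proving new boundary regularity for the ungauged flow. Your plan to ``adapt the gauge/collar techniques'' would, if carried out, amount to exactly this DeTurck reduction; without it the step is not clear.

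Minor correction: uniqueness of complete bounded-curvature Ricci flow is Chen--Zhu~\cite{CZ}, not Shi.
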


The important point here is that conformally compact asymptotically hyperbolic metrics have a certain asymptotic expansion near infinity. One must show first that initial data which have such an expansion will always yield a unique flow with bounded curvature, at least for a finite interval of time, and next that the form of the expansion is preserved for as long as the flow exists. The existence and uniqueness of a bounded curvature flow follows from earlier work \cite{Shi, CZ, Bahuaud}, and indeed the ingredients for the unique continuation are already in the literature.  We thus take this opportunity to explicitly state the unique continuation result.

Beginning in Section 3, we specialize to rotational symmetry. We consider a related, gauge-fixed flow for rotationally symmetric metrics in which the metric is expressed at all times in area-radius coordinates. This flow reduces to a single parabolic equation for a function $f$. This function describes the norm of the radial coordinate vector field for an evolving, rotationally symmetric metric. This metric can be pulled back to a normalized Ricci flow, which for the given initial data is the unique Ricci flow of Shi. Certain combinations of $f$ and its first radial derivative can be shown to be uniformly bounded along the gauge-fixed flow and these combinations pull back to the sectional curvatures of the evolving Ricci flow metric. Let $\lambda$ denote the sectional curvature in $2$-planes tangent to the orbits of rotational symmetry. We prove the following result.

\begin{theorem}[Long-time existence] \label{theorem1.2}
Let $g(t)$ be the unique normalized Ricci flow of bounded curvature on $[0,T_{\max})\times {\mathbb R}^n$ developing from an initial metric $g(0)=g_0$ such that $g_0$ is complete asymptotically hyperbolic, and rotationally symmetric. Let $T_{\max}$ be the maximal time of existence. Then
\begin{enumerate}
\item[(a)] $g(t)$ is complete, rotationally symmetric and asymptotically hyperbolic for all $t\in [0,T_{\max})$ and
\item[(b)] if the initial sectional curvature in 2-planes tangent to the orbits of rotational symmetry is $\lambda(g_0)\le 0$, then
\begin{enumerate}
\item[(i)] $T_{\max}=\infty$,
\item[(ii)] $\lambda(g(t))\le 0$ for all $t>0$, and
\item[(iii)] if $\lambda(g_0)< 0$, there are constants $C,c>0$ depending only on $n$ and $g_0$ such that $\vert {\rm Rm} + K \vert < Ce^{-ct}$.
\end{enumerate}
\end{enumerate}
\end{theorem}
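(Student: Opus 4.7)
The plan is to use the scalar parabolic reduction of Section 3 combined with maximum-principle arguments on curvature quantities, and invoke Theorem~\ref{theorem1.1} as the long-time continuation criterion.

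\emph{Part (a).} For every $R\in O(n)$ the pulled-back flow $R^*g(t)$ is another bounded-curvature normalized Ricci flow starting from $g_0$, so by the uniqueness portion of Theorem~\ref{theorem1.1} we have $R^*g(t)=g(t)$, i.e.\ $g(t)$ stays rotationally symmetric. Preservation of conformal compactness and the AH condition is already provided by Theorem~\ref{theorem1.1}.

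\emph{Part (b).} Write the evolving metric in area-radius coordinates $s$ as $g=f(s,t)^{-2}ds^2+s^2 g_{S^{n-1}}$, where $f$ evolves under the scalar quasilinear parabolic equation produced by the gauge-fixing of Section 3. The two distinct sectional curvatures of a rotationally symmetric metric---namely $\lambda$ in $2$-planes orthogonal to the orbits and the remaining sectional curvature $\mu$---are explicit algebraic expressions in $f$ and $f'$, so their evolution equations follow by differentiating the PDE for $f$. For (b)(ii), preservation of $\lambda\le 0$ follows from a scalar parabolic maximum principle applied to the evolution of $\lambda$: at a first interior point where $\lambda$ would touch $0$ from below, the spatial gradient vanishes and the Laplacian is nonpositive, while the reaction term should be nonpositive at $\lambda=0$ since hyperbolic space ($\lambda\equiv-1$) is a stationary solution of the flow. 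The AH asymptotic $\lambda\to-1$ rules out loss of control at spatial infinity. For (b)(i), Theorem~\ref{theorem1.1} reduces long-time existence to a uniform bound on $|{\rm Rm}+K|$, which in the rotationally symmetric case reduces in turn to two-sided uniform bounds on $\lambda$ and $\mu$. The upper bound on $\lambda$ is (ii); the lower bound on $\lambda$ and the two-sided bound on $\mu$ come from further maximum-principle estimates on suitable auxiliary combinations of $f$ and $f'$, once again using the AH asymptotics to handle infinity. For (b)(iii), strict initial negativity $\lambda(g_0)<0$ upgrades these bounds to a parabolic differential inequality with a definite damping term, of the form $(\partial_t-\Delta)|{\rm Rm}+K|\le -c|{\rm Rm}+K|$, whence $|{\rm Rm}+K|\le Ce^{-ct}$ by a barrier/comparison argument.

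\emph{Main obstacle.} The most delicate step will be obtaining the two-sided bound on the ``other'' sectional curvature $\mu$, for which no a priori sign is assumed. This requires identifying an auxiliary scalar function of $f$ and $f'$ whose extrema can be controlled by the maximum principle under the coupled evolution, while ensuring via the AH asymptotics that the sup and inf are not lost at spatial infinity. The exponential decay in (b)(iii) will similarly hinge on isolating the correct damped curvature quantity; in both cases, the strategy must be specifically adapted to the fact that we work on noncompact $\mathbb{R}^n$ and cannot simply appeal to the compact form of the maximum principle.
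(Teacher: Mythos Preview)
Your overall architecture matches the paper's: reduce to the scalar equation for $f$ in area-radius gauge, derive evolution equations for the two sectional curvatures, apply maximum principles, and feed the resulting curvature bounds into the continuation criterion of Theorem~\ref{theorem1.1}. Part~(a) is handled exactly as you say. However, you have misidentified the main obstacle, and this leaves a genuine gap.

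The serious difficulty is not spatial infinity but the \emph{origin} $r=0$. The drift terms in the evolution equations for $\lambda$ and for the other sectional curvature (the paper calls it $\kappa$, you call it $\mu$) carry $1/r$ singularities---see the vector fields $Y$ and $Z$ in (\ref{eq3.10}) and (\ref{eq5.8})---so the PDEs are singular there, and you have \emph{no a priori control} on $\lambda$ or $\kappa$ at $r=0$. Your sketch ``at a first interior point where $\lambda$ would touch $0$\dots'' tacitly assumes an interior maximum exists away from the origin; nothing prevents the supremum from being approached as $r\to 0$, and then the standard first-touching argument does not apply. The paper's remedy is to study, instead of $\Lambda:=e^{2(n-1)\delta^2 t}(\lambda+1)$ and $\mu:=e^{4a(n-1)t}(\kappa-\lambda)^2$, the weighted quantities
\[
\Lambda_\alpha:=\frac{r^\alpha}{r^\alpha+\alpha}\,\Lambda,\qquad \mu_\alpha:=\frac{r^\alpha}{r^\alpha+\alpha}\,\mu,\qquad \alpha\in(0,1),
\]
which vanish at $r=0$ by construction, so that Proposition~\ref{proposition4.1} (a Hopf-type maximum principle on $(0,T]\times(0,\infty)$) applies with known boundary values at both ends. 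One then checks that the zero-order coefficient $c(t,r)$ in the resulting linear inequality is strictly negative at any would-be interior extremum, obtains an $\alpha$-independent bound, and sends $\alpha\searrow 0$. This weighting trick is the heart of Sections~5 and~6 and is absent from your plan.

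Two smaller points. First, for (b)(ii) the paper does \emph{not} run the maximum principle on $\lambda$ directly; it is cleaner to observe that $\lambda\le 0\Longleftrightarrow f\le 1$ and to bound $w=f^2$ via (\ref{eq4.4}), where the boundary value $w(t,0)=1$ \emph{is} known (Corollary~\ref{corollary4.4}). Your heuristic ``the reaction term should be nonpositive at $\lambda=0$ since hyperbolic space is stationary'' is not a valid inference; the actual reaction $2(n-1)\lambda(\lambda+1)$ happens to vanish at $\lambda=0$, but that is a computation, not a consequence of stationarity at $\lambda=-1$. Second, for (b)(iii) the paper does not produce a single inequality $(\partial_t-\Delta)|{\rm Rm}+K|\le -c|{\rm Rm}+K|$; rather it obtains separate exponential decay for $\lambda+1$ (Proposition~\ref{proposition5.3}) and for $(\kappa-\lambda)^2$ (Proposition~\ref{proposition6.5}), again via the $\alpha$-weighted functions, and then combines them through (\ref{eq7.1}).
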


\begin{remark}\label{remark1.3}
Assumption (b) above implies that the initial data contain no closed minimal hypersurfaces.
\end{remark}

\begin{proof}[Proof of Remark] Consider first a minimal hypersphere that is an orbit of the rotational symmetry group. But in rotational symmetry the Gauss-Codazzi equations for the orbits give that
\begin{equation}
\label{eq1.4}
\frac{1}{r^2}=\lambda+\frac{H^2}{(n-1)^2}\ ,
\end{equation}
where $r$ is the intrinsic radius of curvature of an orbit and $H$ is its mean curvature. When $\lambda\le 0$, then
\begin{equation}
\label{eq1.5}
\frac{1}{r^2}\le \frac{H^2}{(n-1)^2}\ ,
\end{equation}
so $H>0$, and then no orbit can be a minimal hypersphere.

The general result now follows from the standard barrier argument. A closed minimal hypersurface $\Sigma$ has a point $p$ which maximizes the distance from the origin of rotational symmetry. At $p$, $\Sigma$ is tangent to a distance sphere from the origin (and thus an orbit of rotational symmetry) bounding a closed ball $B$. By the above paragraph, this distance sphere is mean convex (indeed, by symmetry, convex). But by the maximum principle for minimal surfaces, any minimal hypersurface tangent to a mean convex surface at $p$ cannot lie entirely within $B$, contradicting that $p$ maximizes distance from the origin.
\end{proof}

This leaves open the possibility that for initial data containing no minimal hypersphere but having $\lambda$ somewhere positive, a neckpinch could form during the flow. This intriguing possibility cannot happen for rotationally symmetric, asymptotically flat Ricci flow \cite{OW}, and would be worthy of further study (possibly numerically).

By Theorem \ref{theorem1.2}, provided that the sectional curvatures in planes tangent to the orbits of symmetry are initially $\lambda< 0$, the norm of the Riemann tensor is not only uniformly bounded but all sectional curvatures decay to $-1$ exponentially along the flow. We are then able to show that the flow converges in the infinite-time limit to hyperbolic space.

\begin{theorem}[Convergence]\label{theorem1.4}
Let $g(t)$, $g_0$ be as in Theorem \ref{theorem1.2}. Choose any increasing, unbounded sequence $t_n$. Let $O\in {\cal M}$ denote the origin of rotational symmetry. Then $({\cal M},g(t_n),O)$ converges in the pointed Cheeger-Gromov sense to standard hyperbolic space.
\end{theorem}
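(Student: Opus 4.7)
The plan is to apply Cheeger-Gromov compactness to the sequence $(\mathcal{M}, g(t_n), O)$ and then identify every subsequential limit with standard hyperbolic space. By Theorem~\ref{theorem1.2}(b)(iii) we have $|{\rm Rm}(g(t))+K|_{g(t)} \leq Ce^{-ct}$, and since $|K|_{g(t)}$ is bounded by a dimensional constant, this yields a uniform spacetime bound on $|{\rm Rm}(g(t))|_{g(t)}$. Shi-type local derivative estimates for the normalized Ricci flow (which, after absorbing the linear term by a time-dependent rescaling, reduce to Shi's estimates for the Ricci flow) then give uniform bounds $|\nabla^k {\rm Rm}(g(t))|_{g(t)} \leq C_k$ on $\mathcal{M}\times[1,\infty)$ for every $k\geq 0$.

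Next I would show that $\inj(O, g(t))$ is uniformly bounded below, in fact tends to infinity. The exponential decay provides a time $T_0$ beyond which every sectional curvature of $g(t)$ is strictly negative. Since $\mathcal{M} \cong \bR^n$ is simply connected and $g(t)$ is complete by Theorem~\ref{theorem1.2}(a), the Cartan-Hadamard theorem forces $\exp_O^{g(t)}\colon T_O \mathcal{M} \to \mathcal{M}$ to be a diffeomorphism for every $t\geq T_0$, so $\inj(O, g(t_n)) = \infty$ for all sufficiently large $n$.

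With uniform $C^\infty$ bounds on the curvature and a positive (in fact infinite) lower bound on the injectivity radius at $O$, the Cheeger-Gromov-Hamilton compactness theorem produces a subsequence $\{t_{n_k}\}$ such that $(\mathcal{M}, g(t_{n_k}), O)$ converges in the pointed $C^\infty$ Cheeger-Gromov sense to a complete smooth limit $(\mathcal{M}_\infty, g_\infty, O_\infty)$. Combining $C^\infty_{\rm loc}$ convergence with the uniform decay $|{\rm Rm}(g(t))+K|_{g(t)} \to 0$ yields ${\rm Rm}(g_\infty) + K(g_\infty) = 0$ throughout $\mathcal{M}_\infty$, so $g_\infty$ has constant sectional curvature $-1$.

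It remains to identify $(\mathcal{M}_\infty, g_\infty)$ with standard hyperbolic space. I would exploit the rotational symmetry preserved along the flow: in geodesic polar coordinates at $O$, each $g(t)$ has the warped form $dr^2 + \varphi(r,t)^2 g_{\bS^{n-1}}$ on $(0,\infty)\times \bS^{n-1}$, with $\varphi$ extending smoothly across $r=0$ via the usual parity conditions. The uniform curvature and derivative estimates translate into $C^\infty_{\rm loc}$ bounds on $\varphi(\cdot,t)$, and any subsequential limit $\varphi_\infty(r)$ must realise a rotationally symmetric warping profile of constant curvature $-1$ with the correct smoothness at the origin, forcing $\varphi_\infty(r)=\sinh r$ and hence $g_\infty = g_{\mathbb{H}^n}$. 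The main obstacle is precisely the passage of rotational symmetry through the Cheeger-Gromov limit, since a priori the convergence diffeomorphisms need not be equivariant; working in a rotationally symmetric coordinate gauge (geodesic polar or area-radius coordinates) sidesteps this by building the $O(n)$-action into the coordinate system. Since every subsequence of $\{t_n\}$ then admits a further subsequence converging to the same limit $\mathbb{H}^n$, the entire sequence $(\mathcal{M}, g(t_n), O)$ converges to hyperbolic space in the pointed Cheeger-Gromov sense.
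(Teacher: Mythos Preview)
Your proposal is correct and follows essentially the same strategy as the paper: uniform curvature bounds (from the exponential decay of $|{\rm Rm}+K|$), Shi-type derivative bounds (the paper packages these as Corollary~\ref{corollary2.4}), an injectivity radius lower bound at $O$, then Hamilton's compactness theorem, and finally identification of the limit via the curvature decay.

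The two places where your argument diverges from the paper's are worth noting. For the injectivity radius, the paper argues directly from rotational symmetry: radial geodesics through $O$ cannot close, so there are no geodesic loops, and a Hopf--Rinow argument rules out conjugate points; hence $\inj_O=\infty$ for \emph{every} $t$. Your Cartan--Hadamard argument is cleaner but only kicks in once all sectional curvatures are negative (which the decay estimate guarantees after some $T_0$); this is harmless since you only need the bound along the tail of the sequence. For the identification of the limit, you work harder than necessary: once $\inj_{O}(g(t_n))=\infty$ along the sequence, the pointed limit also has $\inj_{O_\infty}=\infty$, so $\exp_{O_\infty}$ is a diffeomorphism onto $\mathcal{M}_\infty$, which is therefore $\mathbb{R}^n$; a complete simply connected space form of curvature $-1$ is $\mathbb{H}^n$. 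You therefore do not need to pass rotational symmetry or the warping function through the limit at all. The paper's proof simply asserts constant curvature $-1$ and stops, leaving this last step implicit.
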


This paper is organized as follows. Section 2 contains background on conformally compactifiable and asymptotically hyperbolic manifolds. We also present certain standard Ricci flow results, which we adapt to the normalized Ricci flow \ref{eq1.1} for use in the proof of Theorem \ref{theorem1.1} given at the end of that section. In Section 3, we specialize to rotational symmetry. We outline our strategy, which involves introducing a gauge-fixed, rotationally symmetric flow which pulls back to normalized Ricci flow but which has the virtue of reducing to a single parabolic flow equation, a strategy also used in earlier work on Ricci flow in asymptotically flat manifolds \cite{OW, GOW}. We discuss related work of Ma and Xu \cite{MX} in this section. We also list certain basic results that we will need later. Section 4 contains a parabolic maximum principle adapted to the singular PDEs which arise, based on the strong maximum principle of Hopf. We apply this principle to prove that no neckpinches can form under our rotationally symmetric flow if no minimal hypersphere is present in the initial data. In Section 5, we find bounds on the sectional curvature $\lambda$ in $2$-planes tangent to the orbits of the rotational symmetry. In Section 6, we find that the difference between $\lambda$ and the sectional curvature $\kappa$ in planes containing the radial direction decays exponentially to zero, provided $\lambda<0$. Combining these results with the continuation principle of Theorem \ref{theorem1.1}, the proofs of Theorems \ref{theorem1.2} and \ref{theorem1.4} follow easily. These are given in Section 7.

\subsection{Conventions} Our conventions generally follow those of the book \cite{Chow}. However, we lower the index of the Riemann tensor ${\rm Riem}=R_{kli}{}^j$ into the fourth position, so $g(\cdot,{\rm Riem})=g_{jp}R_{kli}{}^p=R_{klij}$. We sometimes denote this object (i.e., the Riemann tensor with four lower indices) by $\rm{Rm}$.

\subsection{Acknowledgements} EB is grateful to Fr\'ed\'eric Rochon for a discussion that clarified the proof of Theorem \ref{theorem1.1}. The work of EW was partially supported by NSERC Discovery Grant RGPIN 203614.  Both authors are grateful to the Park City Math Institute 2013 Summer Program, where this work was commenced, and the Banff International Research Station workshop 15w5148, \emph{Geometric Flows: Recent Developments and Applications}, where this work was completed.

\section{Flow of conformally compact asymptotically hyperbolic metrics}
\setcounter{equation}{0}

\subsection{Definitions.} A manifold $({\cal M},g)$ with an asymptotic end is said to be \emph{(smoothly) conformally compactifiable} if there is a (smooth) manifold-with-boundary $(M,{\tilde g})$, an embedding $i:{\cal M}\hookrightarrow M$ that maps ${\cal M}$ onto the interior of $M$, and a function $\Omega: M\to [0,\infty)$ such that $g=i^*{\left(\Omega^{-2} \tilde g\right)}$ where $\Omega$ vanishes to first order precisely on $\partial M$. Then $({\cal M},g)$ is \emph{conformally compact asymptotically hyperbolic} if there is a smooth boundary defining function $\Omega$, such that $|d \Omega|^2_{\Omega^2 g} = 1$ on $\partial M$ and $\Omega^2 g$ extends to a smooth metric on $\partial M$. For any representative metric $g_{(0)}$ of the induced conformal class on the boundary, we may solve for a new \emph{special defining function} $x$ so that $|dx|^2_{x^2 g} \equiv 1$ on a collar of the boundary. In this neighborhood, the metric may be decomposed as
\begin{equation}
\label{eq2.1}
g = \frac{dx^2 + g_x}{x^2},
\end{equation}
with $g_x$ a smooth family of metrics on $\partial M$ and $g_x|_{x=0} = g_{(0)}$.

Let $K$ denote the $+1$ constant curvature $4$-tensor
\begin{equation}
\label{eq2.2}
K_{ijkl} = g_{il} g_{jk} - g_{ik}g_{jl}.
\end{equation}
A computation shows that asymptotically hyperbolic metrics are complete and of bounded curvature; moreover
\begin{equation}
\label{eq2.3}
|{\rm Rm} + K|_g = O(x).
\end{equation}

\subsection{Normalized Ricci flow: Unique continuation}

\noindent We consider the NRF given by \eqref{eq1.1} in two related settings, that of complete metrics of bounded curvature, and that of conformally compact asymptotically hyperbolic metrics.

The following result allows us to apply certain standard Ricci flow results to the normalized Ricci flow.

\begin{lemma}\label{lemma2.1}
Define
\begin{equation}
\label{eq2.4}
\begin{split}
u(t):=&\, \frac{e^{2(n-1)t}-1}{2(n-1)}\ ,\\
{\hat g}\circ u :=&\, e^{2(n-1)t}g=\left ( 1+2(n-1)u\right )g\ .
\end{split}
\end{equation}
Then $g(t)$ is a normalized Ricci flow for $t\in [0,T]$ with initial metric $g(0)=g_0$ if and only if ${\hat g}(u)$ is a Ricci flow for $u\in [0,U]$ with initial metric ${\hat g}=g_0$, where $U=\frac{e^{2(n-1)T}-1}{2(n-1)}$.
\end{lemma}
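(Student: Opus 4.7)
The plan is to verify both directions of the equivalence by direct computation, using the chain rule together with the fact that the Ricci tensor is invariant under multiplication of the metric by a positive constant. That is, $\mathrm{Ric}(cg) = \mathrm{Ric}(g)$ for any constant $c>0$, since the Christoffel symbols and the Riemann $(1,3)$-tensor are unchanged by constant rescaling.

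Assuming $g(t)$ solves the NRF on $[0,T]$, I would differentiate the defining relation $\hat g(u(t)) = e^{2(n-1)t} g(t)$ in $t$, obtaining
\begin{equation*}
\frac{d\hat g}{du}\cdot\frac{du}{dt} \;=\; 2(n-1)e^{2(n-1)t}\,g(t) \;+\; e^{2(n-1)t}\,\frac{\partial g}{\partial t}.
\end{equation*}
Substituting $\partial g/\partial t = -2\mathrm{Ric}(g)-2(n-1)g$, the two $(n-1)g$ terms cancel, leaving $(d\hat g/du)(du/dt) = -2 e^{2(n-1)t}\mathrm{Ric}(g)$. Since $du/dt = e^{2(n-1)t}$ and $\mathrm{Ric}(\hat g) = \mathrm{Ric}(e^{2(n-1)t}g) = \mathrm{Ric}(g)$ by scale invariance, this reduces to $d\hat g/du = -2\mathrm{Ric}(\hat g)$, which is the unnormalized Ricci flow on the interval $u\in[0,U]$ with $U = (e^{2(n-1)T}-1)/(2(n-1))$. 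The initial conditions match since $u(0)=0$ and $e^0=1$.

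For the converse, starting from a Ricci flow $\hat g(u)$ on $[0,U]$, I would define $g(t) := e^{-2(n-1)t}\hat g(u(t))$ and run the same computation in reverse: differentiating in $t$ now produces a $-2(n-1)g$ contribution from the exponential prefactor, and combining this with the Ricci flow equation for $\hat g$ (together with scale invariance of Ricci) reproduces the NRF equation $\partial g/\partial t = -2\mathrm{Ric}(g) - 2(n-1)g$. The time interval correspondence is the same monotone bijection as before.

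I do not expect any genuine obstacle in this lemma; the only points requiring care are the application of the chain rule through the time reparametrization $u(t)$, and the explicit verification that the $(n-1)g$ term in the NRF is exactly the correction produced by the exponential conformal factor. The choice of the specific factor $e^{2(n-1)t}$ (and hence of $u(t)$) is dictated precisely by this cancellation.
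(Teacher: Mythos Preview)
Your proposal is correct and follows exactly the approach indicated in the paper, which simply instructs one to compute $\frac{\partial}{\partial t}\left(e^{2(n-1)t}g\right)$ and use the NRF equation (\ref{eq1.1}); you have carried out that computation in full detail, including the converse direction and the scale invariance of the Ricci tensor that the paper leaves implicit.
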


\begin{proof}
Compute $\frac{\partial}{\partial t} {\hat g}\circ u= \frac{\partial}{\partial t}\left ( e^{2(n-1)t}g\right )$ and use (\ref{eq1.1}).
\end{proof}

We recall the fundamental existence result for the Ricci flow on non-compact manifolds:
\begin{theorem}[WX Shi \cite{Shi}]
\label{theorem2.2}
Let $(M,g_{ij}(x))$ be an $n$-dimensional complete noncompact Riemannian manifold with Riemann curvature ${\rm Rm}$ satisfying $|R_{ijkl}|^2 \leq k_0$ on ${M}$ for some $k_0 \in {\mathbb R}$. Then there exists a constant $T(n,k_0) > 0$ such that the evolution equation
\begin{equation}
\label{eq2.5}
\begin{split}
\partial_t g =&\, -2 {\rm Rc}\ ,\\
g(0)=&\, g_0\ ,
\end{split}
\end{equation}
has a smooth solution $g_{ij}(t,x) > 0$ for a short time $0 \leq t \leq T(n,k_0)$. Furthermore, for any integer $m \geq 0$, there exist constants $c_m = c_m(n, m, k_0)$ such that
\begin{equation}
\label{eq2.6}
\sup_{x \in {\cal M}} | \nabla^m R_{ijkl}(t,x)|^2 \leq \frac{C_m}{t^m}, \; 0 \leq t \leq T(n,k_0).
\end{equation}
\end{theorem}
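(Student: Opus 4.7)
The plan is to combine DeTurck's trick with a compact exhaustion of $M$, and then to pass to the limit using uniform a priori estimates. Since $-2\,\mathrm{Rc}(g)$ has a principal symbol with kernel reflecting the diffeomorphism invariance of Ricci flow, I would first replace \eqref{eq2.5} by the DeTurck-modified system
\begin{equation*}
\partial_t g = -2\,\mathrm{Rc}(g) + \mathcal{L}_{W}g, \qquad W^k = g^{ij}\bigl(\Gamma^k_{ij}(g) - \Gamma^k_{ij}(\tilde g)\bigr),
\end{equation*}
with $\tilde g = g_0$ as a fixed background. This system is quasilinear strictly parabolic, and a solution pulls back, under the flow of $-W$, to a Ricci flow. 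I would then exhaust $M$ by relatively compact open sets $U_1 \Subset U_2 \Subset \cdots$ with $\bigcup_j U_j = M$ and solve the DeTurck flow on $\overline{U_j}$ with Dirichlet data $g_0$ on $\partial U_j$; short-time existence on each compact piece is standard linear-parabolic theory.

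The core of the argument is obtaining curvature estimates on the solutions $g_j$ that are uniform in $j$. The familiar evolution inequality
\begin{equation*}
(\partial_t - \Delta)|\mathrm{Rm}|^2 \le -2|\nabla \mathrm{Rm}|^2 + C(n)|\mathrm{Rm}|^3,
\end{equation*}
combined with a maximum-principle comparison to the ODE $\dot y = Cy^{3/2}$ with $y(0)=k_0$, would yield a bound $|\mathrm{Rm}|(t) \le 2\sqrt{k_0}$ on a time interval $[0,T(n,k_0)]$ depending only on $n$ and $k_0$. With $|\mathrm{Rm}|$ bounded, the metrics $g_j(t)$ remain uniformly equivalent to $g_0$ on any fixed compact set. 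The higher-derivative estimates \eqref{eq2.6} then follow from Bernstein-type arguments: one applies the maximum principle inductively to weighted quantities of the form $F_m = t^m |\nabla^m \mathrm{Rm}|^2 + A_m F_{m-1}$, using that the good term $-c_m|\nabla^{m+1}\mathrm{Rm}|^2$ appearing in $(\partial_t - \Delta)F_m$ absorbs the lower-order cross-terms once the constants $A_m$ are chosen large enough.

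With uniform $C^\infty_{\mathrm{loc}}$ bounds on $g_j(t)$ in hand, a standard diagonal Arzel\`a--Ascoli extraction produces a subsequential limit $\hat g(t)$ solving the DeTurck flow on $M\times[0,T(n,k_0)]$; pulling back by the flow of $-W$ yields the desired Ricci flow, and the a priori estimates persist in the limit. The main obstacle is securing the curvature bound \emph{independently of the exhaustion}: the Dirichlet condition $g = g_0$ on $\partial U_j$ generally prevents $|\mathrm{Rm}|$ from matching its initial value in a boundary layer, so a naive global application of the maximum principle fails at $\partial U_j$. Shi's resolution is to localize via cutoff functions $\eta$ supported well inside $U_j$ and to run the maximum principle on $\eta^2 |\mathrm{Rm}|^2$, absorbing the error terms $|\nabla\eta|\cdot|\nabla\mathrm{Rm}|$ into the good reaction term $-2\eta^2|\nabla\mathrm{Rm}|^2$; this guarantees that the estimate propagates uniformly into the interior on the time scale $T(n,k_0)$, which is the critical point on which the whole construction rests.
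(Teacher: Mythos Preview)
The paper does not prove Theorem~\ref{theorem2.2} at all: it is quoted verbatim as Shi's result from \cite{Shi} and used as a black box, with no argument supplied. There is therefore nothing in the paper to compare your proposal against.

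That said, your outline is a faithful high-level summary of Shi's original strategy (DeTurck gauge, solve on an exhaustion with boundary data, local maximum-principle estimates for $|\mathrm{Rm}|$ and then Bernstein-type estimates for $|\nabla^m\mathrm{Rm}|$, diagonal extraction). If you intend to actually carry this out rather than cite it, be aware that the delicate part you flag at the end---controlling curvature uniformly near $\partial U_j$ where the Dirichlet condition forces the metric to stay at $g_0$---is genuinely the hard step in \cite{Shi}, and your one-sentence sketch (``localize via cutoff $\eta$ and absorb $|\nabla\eta|\,|\nabla\mathrm{Rm}|$ into $-2\eta^2|\nabla\mathrm{Rm}|^2$'') undersells the work involved: one also needs control on $|\nabla^2\eta|$ and on how far the metric drifts from $g_0$ in the cutoff region, which in turn requires preliminary $C^0$ and $C^1$ estimates on $g-g_0$. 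For the purposes of this paper, however, a citation is all that is needed.
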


Regarding uniqueness, we have the following:

\begin{theorem}[Chen and Zhu \cite{CZ}]
\label{theorem2.3}
Let $(M^n, g_{ij}(x))$ be a complete noncompact Riemannian manifold of dimension $n$ with bounded curvature. Let $g_{ij}(t,x)$ and $\gbar_{ij}(t,x)$ be two solutions to the Ricci flow on $[0,T] \times M$ with the same $g_{ij}(x)$ as initial data and with bounded curvatures. Then $g_{ij}(t,x) = \gbar_{ij}(t,x)$ for all $(t,x) \in [0,T] \times M^n$.
\end{theorem}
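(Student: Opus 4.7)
The plan is to use Hamilton's De Turck trick adapted to the non-compact setting: reduce Ricci flow, which is only weakly parabolic because of its diffeomorphism invariance, to a strictly parabolic quasilinear system for which classical uniqueness applies. Given two solutions $g(t)$ and $\bar g(t)$ of $\partial_t g=-2\mathrm{Rc}(g)$ on $[0,T]\times M$ with common initial data $g_0$ and uniformly bounded Riemann tensor, I would construct a one-parameter family of diffeomorphisms $\phi_t:M\to M$ with $\phi_0=\mathrm{id}$ such that $\tilde g(t):=\phi_t^*\bar g(t)$ solves the same strictly parabolic De Turck flow as $g(t)$.

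The natural way to produce $\phi_t$ is via the harmonic map heat flow $\partial_t\phi_t=\tau_{g(t),\bar g(t)}(\phi_t)$, where $\tau$ denotes the tension field of $\phi_t$ regarded as a map between the evolving manifolds. Once $\phi_t$ exists smoothly on some interval $[0,\delta]$, a direct computation shows that $\tilde g$ and $g$ both satisfy an equation of the form $\partial_t g=-2\mathrm{Rc}(g)+\mathcal{L}_W g$, with $W$ the De Turck vector field built from the difference between the Christoffel symbols of $g$ and a fixed background connection. This system is strictly parabolic in $g$, and uniqueness for quasilinear strictly parabolic systems with bounded coefficients and bounded solutions on complete manifolds forces $\tilde g\equiv g$ on $[0,\delta]$. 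Examining $\phi_t$, which itself solves a strictly parabolic equation with initial value the identity, shows $\phi_t=\mathrm{id}$, giving $\bar g=g$ on $[0,\delta]$; iteration over finitely many such intervals covers $[0,T]$.

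The principal obstacle, and the technical core of the argument, is the short-time existence and smoothness of the harmonic map heat flow $\phi_t$ on the non-compact manifold $M$. On compact manifolds this is classical Eells--Sampson theory, but in the non-compact setting the usual maximum principle arguments are not immediately available because extrema need not be attained. The key estimate needed is a Bernstein-type a priori bound on $|d\phi_t|^2_{g(t),\bar g(t)}$, derived from its evolution equation and absorbing the curvature-coupling terms via the hypothesized uniform bounds on $|\mathrm{Rm}(g)|$ and $|\mathrm{Rm}(\bar g)|$. Localizing by a carefully chosen spacetime cutoff and applying a maximum principle valid on complete non-compact manifolds yields a uniform gradient bound, from which smooth existence of $\phi_t$ on a time interval depending only on $n$ and the curvature bound can be extracted. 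Once this estimate is in hand the De Turck reduction and parabolic uniqueness close the argument; everything else is formal.
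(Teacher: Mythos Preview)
The paper does not supply a proof of this theorem: it is quoted verbatim as a result of Chen and Zhu \cite{CZ} and used as a black box. So there is no ``paper's own proof'' to compare against.

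That said, your outline is essentially the strategy Chen and Zhu actually use. They solve the harmonic map heat flow from $(M,g(t))$ into $(M,\bar g(t))$ with identity initial data, obtain a priori bounds on the energy density using the assumed curvature bounds and a localized maximum principle on complete noncompact manifolds, pull back one solution to obtain two solutions of the same strictly parabolic Ricci--DeTurck system, invoke uniqueness for that system, and then argue that the diffeomorphism must be the identity. Your identification of the harmonic map heat flow existence on a noncompact manifold as the technical heart is accurate. One point to be careful about: the uniqueness of the strictly parabolic DeTurck system on a noncompact manifold is itself not entirely ``classical'' and requires its own energy or maximum-principle argument in the class of metrics uniformly equivalent to the background; Chen and Zhu handle this as well. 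Apart from that, your sketch matches the cited source.
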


As a consequence, we immediately obtain the following

\begin{corollary}
\label{corollary2.4}
Theorems \ref{theorem2.2} and \ref{theorem2.3} apply with the flow (\ref{eq2.5}) replaced by (\ref{eq1.1}), and the estimates (\ref{eq2.6}) become
\begin{equation}
\label{eq2.7}
\sup_{x \in M} | \nabla_g^m R_{ijkl}(t,x)|^2 =  \sup_{x \in M} | e^{-2(n-1)t} \nabla_{ {\hat g}}^m  {\hat R}_{ijkl}(t,x)|^2
\leq \frac{C_m e^{-2(n-1)t}}{t^m}\ .
\end{equation}
\end{corollary}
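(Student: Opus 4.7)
The plan is to invoke Lemma~\ref{lemma2.1} to reduce everything to Shi's theorem and the Chen--Zhu uniqueness theorem for the standard (unnormalized) Ricci flow. By Lemma~\ref{lemma2.1}, if $g(t)$ solves NRF on $[0,T]$ with $g(0)=g_0$, then $\hat g(u)=e^{2(n-1)t(u)}g(t(u))$ solves the Ricci flow on $[0,U]$ with $\hat g(0)=g_0$, and conversely; the time change $u(t)=(e^{2(n-1)t}-1)/(2(n-1))$ is a smooth, strictly increasing bijection between the two intervals. Since the initial data are identical, Theorem~\ref{theorem2.2} applied to $g_0$ (which by hypothesis has bounded curvature, hence $|R_{ijkl}|^2\le k_0$ for some $k_0$) produces a Ricci flow $\hat g(u)$ on $[0,U(n,k_0)]$, and pulling back through $u(t)$ yields an NRF $g(t)$ on a corresponding interval $[0,T(n,k_0)]$. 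Uniqueness of $\hat g$ among bounded curvature solutions from Theorem~\ref{theorem2.3} likewise transfers to uniqueness of $g(t)$, because the correspondence in Lemma~\ref{lemma2.1} preserves bounded curvature: $g$ and $\hat g$ differ by a (spatially constant, time-dependent) conformal factor, so curvature bounds for one are equivalent to curvature bounds for the other.

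For the derivative estimates, the key observation is that at each fixed $t$ the metrics $\hat g$ and $g$ differ by a constant conformal factor $c^2:=e^{2(n-1)t}$, so their Levi-Civita connections agree, $\nabla_g=\nabla_{\hat g}$. Writing the Riemann tensor with all indices down, $\hat R_{ijkl}=c^2 R_{ijkl}$, and using that norms of a $(0,k)$-tensor scale by $c^{-2k}$ under $\hat g=c^2 g$, one obtains
\begin{equation*}
|\nabla_g^m R_{ijkl}|_g^2 \;=\; e^{-2(n-1)t}\,\bigl|\nabla_{\hat g}^m \hat R_{ijkl}\bigr|^2,
\end{equation*}
which is the equality asserted in (\ref{eq2.7}). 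The bound then follows from the Shi estimate $|\nabla_{\hat g}^m \hat R|_{\hat g}^2\le C_m/u^m$ combined with the elementary inequality $u(t)\ge t$, which gives $1/u^m\le 1/t^m$, producing the final bound $C_m e^{-2(n-1)t}/t^m$ after the substitution.

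I do not expect any substantial obstacle: the entire argument is a bookkeeping exercise built on Lemma~\ref{lemma2.1}, whose existence is exactly the point of stating that lemma first. The only delicate part is tracking the scaling in the conformal rescaling to verify the stated identity in (\ref{eq2.7}), but this is a routine computation using the fact that the conformal factor is spatially constant (so it commutes with $\nabla$) together with the standard scaling laws for $(0,k)$-tensor norms.
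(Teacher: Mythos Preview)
Your approach is exactly the paper's: invoke Lemma~\ref{lemma2.1} to pass between the NRF and the unnormalized Ricci flow, apply Shi's existence and Chen--Zhu's uniqueness to $\hat g$, and transfer the derivative estimates via the spatially constant rescaling $\hat g=e^{2(n-1)t}g$ (so $\nabla_{\hat g}=\nabla_g$ and $R_{ijkl}(g)=e^{-2(n-1)t}\hat R_{ijkl}$). The paper's proof says essentially nothing more than this.

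One bookkeeping caveat: your displayed identity places $e^{-2(n-1)t}$ \emph{outside} the squared norm, whereas in~(\ref{eq2.7}) it sits \emph{inside} the norm (giving $e^{-4(n-1)t}$ upon squaring, consistent with the tensor identity $\nabla_g^m R = e^{-2(n-1)t}\nabla_{\hat g}^m\hat R$). Also keep track of which metric computes the norm on the right: Shi's bound is in the $\hat g$-norm, and converting $|\cdot|_{\hat g}$ to $|\cdot|_g$ for a $(0,4+m)$-tensor introduces an additional factor $e^{2(4+m)(n-1)t}$, so the final constants and exponential factors need to be assembled with care. This does not affect the strategy, only the precise form of the estimate.
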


\begin{proof}
Simply substitute $g = e^{-2(n-1)t} {\hat g}$ in (\ref{eq1.1}). The decay estimate (\ref{eq2.7}) follows from an elementary Riemann curvature scaling argument, which yields that ${\rm Rm}(g) = e^{-2(n-1)t} {\rm Rm}({\hat g})$. \end{proof}

As an AH metric $g_0$ is complete and of bounded curvature, the results above imply that there is a solution to the NRF starting at $g_0$ for a short time.  We call this \textit{Shi's flow}. Here is the basic long-time existence criterion for this flow [see \cite{Chow}, page 118].

\begin{theorem}\label{theorem2.5}
If $g_0$ is a smooth metric on a noncompact complete manifold with bounded curvature, the normalized Ricci flow has a unique solution $g(t)$ with $g(0) = g_0$ on a maximum time interval $0 \leq t < T_M \leq \infty$.  If $T_M < \infty$, then
\begin{equation}
\label{eq2.8}
\limsup_{t \to T_M} \sup_{p} |{\rm Rm} + K|_g = \infty.
\end{equation}
\end{theorem}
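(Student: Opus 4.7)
The plan is to follow the standard proof for unnormalized Ricci flow, translating each step to NRF via Lemma \ref{lemma2.1} and Corollary \ref{corollary2.4}. First, define $T_M \in (0,\infty]$ as the supremum of $T > 0$ such that the NRF admits a solution on $[0,T)$ with $g(0)=g_0$ and $\sup_{[0,T)\times M}|{\rm Rm}|_g<\infty$. Corollary \ref{corollary2.4} supplies both short-time existence and uniqueness within this class, so $T_M>0$ and the solution is unique.

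Suppose for contradiction that $T_M<\infty$ and that $\sup_{[0,T_M)\times M}|{\rm Rm}+K|_g\le C_0<\infty$. A direct computation shows $|K|_g^2 = 2n(n-1)$ in any metric, so the triangle inequality gives the uniform bound $|{\rm Rm}|_g \le C_0+\sqrt{2n(n-1)}$ on $[0,T_M)\times M$. Transfer to Ricci flow using Lemma \ref{lemma2.1}: the family $\hat g(u) := (1+2(n-1)u)\,g(t(u))$ is a Ricci flow on $[0,U_M)$ with $U_M=(e^{2(n-1)T_M}-1)/[2(n-1)]<\infty$. At each fixed time $t$, $\hat g$ is a constant conformal rescaling of $g$ by $\lambda(t)^2 = e^{2(n-1)t}$; consequently ${\rm Rm}(\hat g)_{ijkl} = \lambda^2\,{\rm Rm}(g)_{ijkl}$ as $(0,4)$-tensors, and
\begin{equation*}
|{\rm Rm}(\hat g)|_{\hat g} \;=\; \lambda^{-2}\,|{\rm Rm}(g)|_g \;=\; e^{-2(n-1)t}\,|{\rm Rm}(g)|_g.
\end{equation*}
Since $e^{-2(n-1)t}\le 1$ for $t\ge 0$, $|{\rm Rm}(\hat g)|_{\hat g}^2$ is uniformly bounded on $[0,U_M)$ by some constant $k_0$.

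The contradiction now follows from the standard extension principle for Ricci flow. Applying Theorem \ref{theorem2.2} with initial metric $\hat g(u_0)$ at any $u_0\in[0,U_M)$ produces a bounded-curvature Ricci flow on $[u_0,\,u_0+T(n,k_0))$, with existence time $T(n,k_0)$ depending only on $n$ and the uniform bound $k_0$, not on $u_0$. Choosing $u_0$ close enough to $U_M$ that $u_0+T(n,k_0)>U_M$, and invoking Chen--Zhu uniqueness (Theorem \ref{theorem2.3}) on $[u_0,U_M)$, one extends $\hat g$ smoothly past $U_M$. Pulling back through Lemma \ref{lemma2.1} extends $g$ past $T_M$, contradicting the maximality of $T_M$.

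The crucial feature exploited in the last paragraph is that Shi's existence time $T(n,k_0)$ depends only on the pointwise curvature bound, not on derivative data, so the flow can be restarted from $\hat g(u_0)$ for any $u_0<U_M$ sufficiently close to $U_M$. Every other step is either a direct consequence of Corollary \ref{corollary2.4}, the algebraic estimate $|{\rm Rm}|_g\le |{\rm Rm}+K|_g+|K|_g$, or the constant conformal scaling between $g$ and $\hat g$ from Lemma \ref{lemma2.1}.
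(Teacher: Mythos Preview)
Your argument is correct. The paper does not actually supply a proof of Theorem \ref{theorem2.5}; it simply states the result as the ``basic long-time existence criterion'' and points to \cite[p.~118]{Chow}, relying implicitly on Lemma \ref{lemma2.1} and Corollary \ref{corollary2.4} to transfer the standard unnormalized statement to the NRF. Your write-up makes that transfer explicit: you define $T_M$ via bounded-curvature solutions, invoke Corollary \ref{corollary2.4} for short-time existence and uniqueness, use the algebraic estimate $|{\rm Rm}|_g\le |{\rm Rm}+K|_g+\sqrt{2n(n-1)}$ to pass from a bound on $|{\rm Rm}+K|_g$ to one on $|{\rm Rm}|_g$, rescale to the Ricci flow $\hat g$ via Lemma \ref{lemma2.1}, and then run the usual Shi restart argument (uniform existence time depending only on $k_0$) together with Chen--Zhu uniqueness to extend past $U_M$. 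This is exactly the argument the citation is gesturing at, just written out in full; there is no substantive difference in approach. One very minor point you leave implicit is that $\hat g(u_0)$ is complete (needed to apply Theorem \ref{theorem2.2}); this follows since bounded-curvature Ricci flow keeps the evolving metrics uniformly equivalent to $g_0$ on compact time intervals, and completeness is preserved under such equivalence and under constant rescaling.
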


Theorem \ref{theorem1.1} concerns the NRF starting at a conformally compact asymptotically hyperbolic metric. The results above alone do not imply that the short time solution to the NRF remains within this class. However, by earlier work of the first author, the unique solution to the NRF remains conformally compact AH for a short time. One could worry that $g(t)$ ceases to be conformally AH at time earlier than the maximal time of existence as a Shi flow. Theorem \ref{theorem1.1} guarantees that this does not happen. We now prove that theorem.

\begin{proof}[Proof of Theorem \ref{theorem1.1}]
In the statement of the theorem, the time $T_M$ represents the maximal interval of existence of the NRF as a Shi flow.  Let $T_1>0$ denote the maximal interval of existence within the class of smoothly conformally compact asymptotically hyperbolic metrics.  Suppose by way of contradiction that $T_1 < T_M$.

On $[0,T_1]$, $g(t)$ is a complete metric with uniformly bounded curvature, and so by the work of Chen-Zhu \cite{CZ} (see in particular Theorem 2.1 and observe that the estimates allow us to continue a solution to $[0,T_1]$), there exists a solution to the harmonic map heat flow coupled with the Ricci flow on $[0,T_1]$ that produces a solution $\gtil(t)$ to the normalized Ricci DeTurck flow on $[0,T_1]$ with uniform control of curvature and all covariant derivatives.  But now since $\gtil(0) = g_0$ is conformally compact asymptotically hyperbolic the regularity results in \cite[Section 5]{Bahuaud} give that $\gtil(t)$ is smoothly conformally compact AH on $[0,T_1]$.  Pullback by the DeTurck diffeomorphism now extends the conformally compact AH solution of the NRF on $[0,T_1]$, contradicting the maximality of $T_1$.  So $T_1 = T_M$.

The conformally compact flow now coincides with the Shi flow, and so the previous extension theorem applies.
\end{proof}

Note that this theorem says nothing about the regularity of conformal compactification of a \emph{limiting} Poincar\'e-Einstein metric. Should the flow converge to a limit, nothing here requires the limit metric to be smoothly conformally compact.


\section{Rotational symmetry}
\setcounter{equation}{0}

\subsection{Introduction}

\noindent There are a number of approaches to rotationally symmetric Ricci flow, see for example \cite{AngenentKnopf, Ivey, MX, OW}. A time-dependent rotationally symmetric metric can be parametrized as
\begin{equation}
\label{eq3.1}
g=\phi^2(t,\rho) d\rho^2+\psi^2(t,\rho)g(\bS^{n-1},{\rm can})\ .
\end{equation}
The Ricci flow of this metric leads to a system of two equations for $\phi$ and $\psi$ which, as usual, is not strictly parabolic. One can use the DeTurck trick, but for long-time existence this is inconvenient since it leads to a parabolic system with two unknown functions. Instead, other methods are available which effectively reduce the problem to a single parabolic equation for a single function.

One such method is to fix a coordinate gauge, and modify the Ricci flow by a diffeomorphism term to preserve the gauge during the flow. The two obvious choices are area-radius coordinates (in which $\psi(t,\rho)=\rho$ for all $t\ge 0$) and normal coordinates about the origin of rotational symmetry (in which $\phi(t,\rho)=1$ for all $t\ge 0$). The former leads to a single parabolic equation for $\psi$, while the latter leads to a considerably more complicated coupled system which we will not consider further. However, a variant on the normal coordinate strategy leads to a common approach in the literature, in which the coordinate vector field $\frac{\partial}{\partial \rho}$ is replaced by $\frac{\partial}{\partial s}:=\frac{1}{\varphi}\frac{\partial}{\partial \rho}$. Then $\frac{\partial}{\partial s}$ and $\frac{\partial}{\partial t}$ form a non-commuting set of vector fields on $[0,T)\times { M}$ (the partial derivative notation notwithstanding). This approach decouples $\phi$ from the $\psi$ equation, leaving a single parabolic equation for $\psi$; the solution for $\phi$ can be found after the $\psi$ equation is solved.

While both the area-radius gauge approach and the non-commuting vector fields approach lead to a single parabolic differential equation, in each case the resulting equation has a singular point at $\rho=0$, as one expects when employing polar coordinates. However, this may be more than just a coordinate artifact since, if sectional curvatures were to become unbounded, symmetry would suggest that this will occur at the origin. This issue does not arise, or at least is not central, in treatments of short-time existence or in attempts to find exact, rotationally symmetric solutions, but for us it will consume the bulk of the effort in what follows.

In the sequel, we have chosen the area-radius approach. This approach at first glance seems to have a disadvantage. Area-radius coordinates break down at minimal hyperspheres, so this approach is valid only when there are none. We will, however, show that if the initial data contain no minimal hyperspheres then none form during the evolution. Hence, this is a restriction on the initial data, and as noted by Remark \ref{remark1.3} it is a consequence of a restriction that we already impose on the sectional curvature.

Were we to employ the alternative approach of non-commuting vector fields, it would be unlikely that we could remove this restriction and thereby allow minimal hyperspheres in the initial data,\footnote
{Perhaps we could \emph{relax} the restriction without removing it entirely. However, we are able to study the $\lambda(0)\le 0$ (as opposed to $\lambda(0)<0$) case with our current methods, and cannot find evidence for convergence for data with $\lambda(0)$ not strictly negative. This may be (weak) evidence that $\lambda<0$ may be necessary for convergence.}
even though the issue of coordinate breakdown at minimal hyperspheres would no longer arise. To see this, consider the following gedankenexperiment, posed in the $n=3$ case for simplicity. Consider ${\mathbb R} \times {\mathbb S}^2$ with an asymptotically hyperbolic (at both ends) rotationally symmetric metric; a time-symmetric slice of a Kottler (i.e., AdS-Schwarzschild) metric will do. This contains a topologically essential minimal sphere, a neck. Under normalized Ricci flow, the neck may at first expand, but a standard argument from the analysis of the Ricci flow of closed 3-manifolds with nontrivial second homotopy can be adapted to this setting and shows that the neck eventually will contract and will form a neckpinch in finite time (see, e.g., \cite[pp 420--429, especially Lemma 18.11]{MT}). Allow the contraction to proceed almost to the singular time, but stop the flow before singularity formation. The neck will now have area of order $\epsilon^2$ for a small positive $\epsilon$.

Now consider a $2$-sphere of radius much larger than $1/\epsilon^2$, lying in one of the two asymptotic regions. Perform a surgery that removes the asymptotic region beyond that sphere, and smoothly attach in its place a sufficiently large (and thus nearly flat) $3$-sphere with a disk removed. This will require some smoothing near the surgery region. The result is ${\mathbb R}^3$ with a very small minimal surface, and the geometry for quite some distance around that minimal surface is insensitive to the surgery.

Now restart the normalized Ricci flow. Pseudo-locality considerations, if they can be applied here, would suggest that the evolution near the neck will be nearly unaffected by the surgery, and since a singularity would have formed immediately had the flow not been stopped, the same will happen now.

It therefore seems plausible that the restrictions inherent in the area-radius approach may not be as onerous as they may at first seem---perhaps greater generality is possible, but plausibly some form of restriction will always be necessary if the flow is to exist for all $t>0$ and converge. The current form of these restrictions accommodates our approach in which the analysis becomes relatively straightforward (though not entirely so, since the singular point at the origin must be carefully dealt with).

We take this opportunity to mention related work by Ma and Xu \cite{MX}. These authors study the behavior of the un-normalized Ricci flow within a class of rotationally symmetric metrics of asymptotically hyperbolic type, using the non-commuting vector fields approach. Nonetheless, they require \emph{all} sectional curvatures to be negative, whereas we have no need to restrict the sectional curvature in radial 2-planes. It is thus implicit in their assumptions, as in ours, that there are no initial minimal hyperspheres. Their flow can be transformed to a flow of our type, but then the asymptotic sectional curvatures would vary in time (according to a simple scaling law). In this transformed picture, their work amounts to the study of the same flow equation and initial conditions, but with a different boundary (rather, asymptotic) condition. They find long-time existence and convergence. Their arguments appear to require a maximum principle for singular differential equations, but the precise nature of this principle is not made explicit in \cite{MX} (\emph{cf} our Propositions \ref{proposition4.1} and Corollary \ref{corollary4.2}, which perhaps can be used to close this apparent gap).

\subsection{Rotational symmetry in area-radius coordinates}
Our strategy here is similar to that of \cite[Section 4.1]{OW}, but the smoothly conformally compactifiable setting means that differences arise in the treatment of the asymptotics.

By Theorem \ref{theorem1.1} and using that Ricci flow (including normalized Ricci flow) preserves isometries, if we start from a rotationally symmetric metric $g_0=g(0)$, then we may write the solution to (\ref{eq1.1}) in the form of (\ref{eq3.1}) on some time interval $t\in[0,T_M)$ (possibly $T_M=\infty$). A computation shows that the normalized Ricci flow (\ref{eq1.1}) for the ansatz of (\ref{eq3.1}) devolves to
\begin{equation}
\label{eq3.2}
\begin{split}
\frac{\partial \phi}{\partial t} =&\,  \frac{(n-1)}{\psi\phi}\frac{\partial^2 \psi}{\partial \rho^2} -\frac{(n-1)}{\psi \phi^2} \frac{\partial \phi}{\partial \rho}\frac{\partial \psi}{\partial \rho}  -(n-1)\phi, \\
\frac{\partial \psi}{\partial t} =&\, \frac{1}{\phi^2}\frac{\partial^2 \psi}{\partial \rho^2}  -\frac{1}{\phi^3} \frac{\partial \psi}{\partial \rho} \frac{\partial \phi}{\partial \rho} + \frac{(n-2)}{\phi^2 \psi} \left ( \frac{\partial \psi}{\partial \rho}\right )^2- \frac{n-2}{\psi} -(n-1)\psi.
\end{split}
\end{equation}
As an initial condition, we may choose $\rho$ to be the distance from the centre of symmetry with respect to the $g_0$ metric, so $\phi(0,\rho) \equiv 1$. Since the flow is smooth, $\phi$ remains bounded and even as a function of $\rho$ near $\rho = 0$, and $\phi(t,0) = 1$ (all of these statements hold at least for a short time).  Similarly, $\psi(t,\rho) \sim \rho$ remains an odd function of $\rho$ near $\rho = 0$. Since $g_0$ is conformally compact AH, then $\psi(0,\rho) \sim e^{ \rho}$, as $\rho \to \infty$. By Theorem \ref{theorem1.1}, $g(t)$ remains conformally compact AH. This ensures that $\psi(t,\rho) \sim e^{ \rho}$ as $\rho \to \infty$ at least for a short time as well. From these bounds, it follows that distance with respect to $g(t)$ is comparable to distance with respect to $g_0$ so that $\Omega = e^{-\rho}$ is a boundary defining function for the conformal infinity.

In view of the discussion of the last subsection, we prefer the ansatz
\begin{equation}
\label{eq3.3} \hat{g}(t) = f^2(t,r)dr^2+r^2 g\left (\bS^{n-1},{\rm can}\right )
\end{equation}
for the evolving metric on $({\mathbb R}^n,g)$. We now derive the equation that $f$ must satisfy. By the change-of-variables formula, using the notation $\phi_t(r):=\phi(t,\rho)$, $\psi_t(r):=\psi(t,\rho)$, we have
\begin{equation}
\label{eq3.4}
f(t,r) = \frac{\phi_t\circ \psi_t^{-1}(r)}{ \left ( \partial_{\rho}\psi_t\right ) \circ \psi_t^{-1}(r)}\ .
\end{equation}
This change of variables arises from a diffeomorphism of ${\mathcal M}$. The next lemma discusses this, after which a subsequent lemma extends the diffeomorphism to $M$ such that the asymptotic structure is preserved.
\footnote
{Equation (\ref{eq3.4}) corresponds to \cite[equation (4.13)]{OW}, except that the latter has a minor error in its denominator: $\partial_r$ should be $\partial_{\rho}$.}

\begin{lemma}\label{lemma3.1}
Let $g(t)=\phi_t^2(\rho)d\rho^2+\psi_t^2(\rho)g(S^{n-1},{\rm can})$ be the smooth, rotationally symmetric, conformally compactifiable and asymptotically hyperbolic flow of metrics obeying (\ref{eq1.1}) on ${\mathbb R}^n$ developing from an initial metric $g_0=g(0)$ that has no minimal hypersphere. Let $F_t:{\mathcal M}\to {\mathcal M}$ be the map with components $(r,\theta^A)=F_t(\rho,\theta^A)$ such that ${\hat g}=\left ( F_t^{-1}\right )^*g(t)=f^2(t,r)dr^2+r^2 g(S^{n-1},{\rm can})$. Then $F_t$ is a smooth diffeomorphism and ${\hat g}$ is smooth.
\end{lemma}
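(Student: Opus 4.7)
The plan is to reduce the lemma to verifying that, for each $t$ in the existence interval, the radial map $\rho \mapsto r=\psi_t(\rho)$ is a smooth diffeomorphism of $[0,\infty)$ whose natural radial extension to $\mathbb{R}^n$ is smooth across the origin of polar coordinates. Smoothness of $\hat{g}$ will then follow from smoothness of $F_t$ by pullback of $g(t)$.

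The first step is to establish that $\partial_\rho \psi_t(\rho) > 0$ for every $\rho \ge 0$. At $\rho = 0$ this is forced by the requirement that $g(t)$ extend smoothly to a Riemannian metric on $\mathbb{R}^n$: in the ansatz \eqref{eq3.1}, smoothness at the origin of rotational symmetry requires $\psi_t$ to extend as an odd smooth function of $\rho$ with $\psi_t(0) = 0$ and $\partial_\rho \psi_t(t,0) = \phi_t(t,0) > 0$. For $\rho > 0$, I would note that the mean curvature of the orbit $\{\rho = \mathrm{const}\}$ equals $(n-1)\partial_\rho \psi_t/(\phi_t\psi_t)$, so a critical point of $\psi_t$ would be a minimal hypersphere. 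The no-minimal-hypersphere hypothesis on $g_0$ is preserved by the flow, a fact to be established in Section 4 via the maximum principle, so $\partial_\rho \psi_t > 0$ on all of $[0,\infty)$. Combined with the AH asymptotics $\psi_t(\rho) \to \infty$ as $\rho \to \infty$ (from Theorem \ref{theorem1.1}), $\psi_t$ is a smooth bijection $[0,\infty)\to[0,\infty)$ whose inverse is smooth on $(0,\infty)$ by the inverse function theorem. The formula \eqref{eq3.4} for $f$ then follows from the direct change-of-variables computation.

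The main obstacle is verifying smoothness of $F_t$ at the origin of rotational symmetry, where polar coordinates degenerate. For this I would pass to Cartesian coordinates. Because $\psi_t$ extends to an odd smooth function of $\rho$, the quotient $\psi_t(\rho)/\rho$ extends to a smooth even function of $\rho$, equivalently a smooth function of $\rho^2 = |x|^2$, which is positive at $\rho = 0$ with value $\partial_\rho \psi_t(t,0)$. Hence
\[
F_t(x) = \frac{\psi_t(|x|)}{|x|}\,x
\]
is a smooth map $\mathbb{R}^n \to \mathbb{R}^n$ with nondegenerate Jacobian at the origin. Applying the same reasoning to $\psi_t^{-1}$, which is likewise smooth and odd on a neighborhood of $0$ with positive derivative there, shows $F_t^{-1}$ is smooth on $\mathbb{R}^n$. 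Therefore $F_t$ is a smooth diffeomorphism of $\mathcal{M}$, and $\hat{g} = (F_t^{-1})^*g(t)$ is a smooth Riemannian metric on $\mathcal{M}$.
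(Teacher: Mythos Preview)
Your argument is essentially correct, but it differs from the paper's in two notable ways.

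First, for $\partial_\rho\psi_t>0$ you invoke the preservation of the no-minimal-hypersphere condition ``to be established in Section~4.'' Be careful: Lemma~\ref{lemma4.3} is proved using the $f$-equation \eqref{eq3.9}, which is derived only after the change of variables of the present lemma is in hand. The paper avoids this circularity by proving Lemma~\ref{lemma3.1} as a \emph{short-time} statement: since $g_0$ has no minimal hypersphere, continuity in $t$ gives $H(t,\cdot)>0$ on some interval $[0,T)$, which suffices to set up the $f$-equation; Lemma~\ref{lemma4.3} then bootstraps to $T_{\max}$ via an open--closed argument. Your forward reference is not fatally circular if read this way, but you should make the bootstrap explicit rather than assert the preserved condition up front.

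Second, your treatment of the origin is genuinely different and in fact cleaner. The paper does not extend $F_t$ across the pole; it explicitly leaves $F_t$ defined only on the punctured manifold and instead verifies directly that $f(t,r)$ is even in $r$ with $f(t,0)=1$, so that $\hat g$ extends smoothly over $r=0$. Your Cartesian argument---writing $F_t(x)=\big(\psi_t(|x|)/|x|\big)x$ and using that $\psi_t(\rho)/\rho$ is a smooth even function of $\rho$ with positive value $\phi_t(0)$ at $\rho=0$---actually shows $F_t$ is a smooth diffeomorphism of all of $\mathbb{R}^n$, which is the literal claim of the lemma. This buys you smoothness of $\hat g$ immediately by pullback, whereas the paper must argue it separately from the parity and leading coefficient of $f$.
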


\begin{proof}
Since we are working in polar coordinates, strictly speaking, the map $F_t$ defined above is defined on ${\mathcal M}$ punctured at the pole.  Since $F_t$ is the identity in the angular components, we must check that the radial map $\psi_t:(0,\infty)_{\rho}\to (0,\infty)_r$ is invertible and has odd parity in $r$ as $r \to 0$.  For convenience we extend $\psi_t$  by continuity to zero; i.e., we regard $\psi_t:[0,\infty)_{\rho}\to [0,\infty)_r$, however we only use that $\psi_t$ and its inverse are smooth on $(0,\infty)$, and we use their asymptotic behaviour.

Now it is elementary that the mean curvature of constant-$\rho$ hypersurfaces
\begin{equation}
\label{eq3.5}
H=\frac{(n-1)\partial_{\rho}\psi_t}{\phi_t\psi_t}=\frac{n-1}{rf}
\end{equation}
tends to infinity at a fixed point of rotational symmetry (i.e., the origin) and tends to $n-1$ on an asymptotically hyperbolic end. Furthermore, if $g_0$ has no minimal hypersphere, then either the mean curvature $H(0,\rho)=\frac{(n-1)\partial_{\rho}\psi_0}{\psi_0}$ of the constant-$\rho$ hypersurfaces is greater than $n-1$ or it achieves a minimum $H_{\rm min}>0$ at some finite $\rho$; in either case, it is bounded away from zero. Then so are both $H(t,\cdot)$ and $\frac{\partial_{\rho}\psi_t}{\psi_t}$ for some interval $t\in [0,T)$. Likewise, taking a smaller $T$ if necessary, we have $0<C\le \phi_t$ (using that $\phi_0=1$). Since $H(t,\rho):=\frac{(n-1)\partial_{\rho}\psi_t}{\phi_t\psi_t}>0$ we may conclude that $\frac{\partial_{\rho}\psi_t}{\psi_t}>0$ for $t\in [0,T)$. Then $\psi_t$ is monotonic in $\rho$. Hence $\psi_t$ is injective.

Surjectivity follows because $\psi_t(0)=0$ ($SO(n)$ fixes an origin in ${\mathbb R}^n$), $\psi_t(\rho)\to \infty$ as $\rho\to\infty$ (there are $SO(n)$ orbits of arbitrarily large area in a conformally compactifiable, rotationally symmetric manifold), and $\psi_t(\cdot )$ is continuous. Thus $\psi_t$ is invertible, and hence so is $F_t$,

Smoothness of $\psi_t^{-1}(r)$ on the open interval $(0,\infty)$ follows from the same simple inverse function theorem argument by differentiating the composition $\psi_t^{-1}( \psi_t(\rho)) = \rho$ that will be used to prove Lemma \ref{lemma3.3} below. Moreover, since $\psi_t$ is an odd function of $\rho$ as $\rho \to 0$, one may check that $\psi^{-1}_t(r)$ will be odd as $r\to 0$.

The explicit expression for $f(t,r)$ given in equation \eqref{eq3.4} and the parity conditions for $\phi_t$ and $\psi_t$ and its inverse now show that $f$ is an even function of $r$.  It is easy to check that the leading coefficient is $1$.

Finally, although the diffeomorphism $F_t$ is not defined at the pole, the resulting pullback metric will still be smooth.  Since $r$ is just the area-radius coordinate for $g(t)$, the condition that $g(t)$ is smooth at $r=0$ is precisely the condition that there is a coordinate system in which the metric at $r=0$ becomes the flat metric in polar coordinates ${\hat g}= dr^2 + r^2 g(S^{n-1},{\rm can})$ plus corrections given by even powers of $r$. Thus since $f$ has leading coefficient $1$ and is even in $r$, ${\hat g}(t)$ is smooth across $r=0$.
\end{proof}

Thus the flow (\ref{eq3.3}) arises from (\ref{eq3.1}) by pullback along the inverse of the time-dependent diffeomorphism $(r, \theta^A) = F_t(\rho, \theta^A) = (\psi_t(\rho), \theta^A)$. Let the generator of the inverse diffeomorphism flow be denoted by $X_t \equiv X(t,\cdot)$. This is not a DeTurck type vector field defined by the difference between the connection of the flowing metric and a fixed background connection. Since $F_t$ is constant in time in the angular variables, $X_t$ will have vanishing angular components and
\begin{equation}
\label{eq3.6}
X_t = - \left ( \frac{\partial \psi_t}{\partial t} \circ {\psi_t^{-1}(r)}\right ) \frac{\partial}{\partial r}\ .
\end{equation}
From this and equations (\ref{eq3.2}) it follows that
\begin{equation}
\label{eq3.7}
X_t =  \left[ \frac{1}{f^3(r)}\frac{\partial f}{\partial r} + \frac{(n-2)}{r}\left ( 1-\frac{1}{f^2(r)}\right ) + (n-1) r\right] \frac{\partial}{\partial r}.
\end{equation}
While individual terms in $X_t$ are singular at $r=0$, from the proof of Lemma \ref{lemma3.1}, we have that $f\sim 1+c(t)r^2+\dots$ as $r\to 0$, and so $X_t$ is not singular. Rather, it vanishes at the origin, as it must. Moreover, if the flow (\ref{eq3.1}) solves (\ref{eq1.1}), then the metric $\hat{g} = (F_t^{-1})^* g$ of (\ref{eq3.3}) solves
\begin{equation}
\label{eq3.8}
\frac{\partial \hat{g}}{\partial t} = -2 \left ( {\rm Rc} ({\hat g}) +(n-1)\hat{g}\right ) +\pounds_{X_t}{\hat g} \ .
\end{equation}
The angular components are constant in time, thus preserving ansatz (\ref{eq3.3}) (providing another derivation of (\ref{eq3.7})). The evolution of the $rr$-component of the metric is given by
\begin{equation}
\label{eq3.9}
\begin{split}
\frac{\partial f}{\partial t} =&\, \frac{1}{f^2} \frac{\partial^2 f}{\partial r^2}
-\frac{2}{f^3} \left ( \frac{\partial f}{\partial r} \right )^2 +\left ( \frac{n-2}{r} -\frac{1}{rf^2} +(n-1)r \right )
\frac{\partial f}{\partial r}\\
&\, -\frac{(n-2)}{r^2 f} \left ( f^2 -1 \right )\ ,\\
\end{split}
\end{equation}
We may write (\ref{eq3.9}) as
\begin{equation}
\label{eq3.10}
\begin{split}
\frac{\partial f}{\partial t} =&\, \Delta f -\frac{1}{f}
\left \vert d f \right \vert^2 + Y(f) -\frac{(n-2)}{r^2 f}
\left ( f^2 -1 \right )\\
Y:=&\, \left ( \frac{n-2}{r} -\frac{n}{rf^2} +(n-1)r \right )
\frac{\partial }{\partial r}.
\end{split}
\end{equation}
where, for any sufficiently differentiable function $u$ of $r$ alone (and possibly $t$), $\Delta$ denotes the scalar Laplacian $\Delta:=\nabla^i\nabla_i$ for the ansatz (\ref{eq3.3}), and so obeys
\begin{equation}
\label{eq3.11}
\Delta u(r) = \frac{1}{r^{n-1}f}\frac{\partial}{\partial r}
\left ( \frac{r^{n-1}}{f} \frac{\partial u}{\partial r} \right )
=\frac{1}{f^2} \frac{\partial^2 u}{\partial r^2}+\frac{(n-1)}{rf^2}
\frac{\partial u}{\partial r} - \frac{1}{f^3} \frac{\partial f}{\partial r}
\frac{\partial u}{\partial r}\ .
\end{equation}
We also employ the notation $\vert \omega \vert \equiv \vert \omega \vert_g :=\sqrt{g^{-1}(\omega,\omega)}$ for $\omega$ a one-form, so that for the radial one-form $df$ we have $\vert \omega \vert^2=\left ( \frac{1}{f}\frac{\partial f}{\partial r}\right )^2$.

Along the flow (\ref{eq3.3}), define two functions
\begin{eqnarray}
\label{eq3.12} \kappa &:=& \frac{1}{rf^3}\frac{\partial f}{\partial r}\ , \\
\label{eq3.13} \lambda &:=& \frac{1}{r^2}\left ( 1-\frac{1}{f^2} \right ) \ .
\end{eqnarray}
These are the sectional curvatures of ${\hat g}$, with $\kappa$ being the sectional curvature in $2$-planes containing $\frac{\partial}{\partial r}$ and $\lambda$ being the sectional curvature in $2$-planes orthogonal to $\frac{\partial}{\partial r}$. They pull back to the sectional curvatures along the normalized Ricci flow (\ref{eq3.1}). By asymptotic hyperbolicity the latter tend to $-1$ as $\rho\to\infty$. Thus so do $\kappa$ and $\lambda$ as $r\to\infty$. Observe that then $f\sim 1/r$ for large $r$. Also, $\kappa$ and $\lambda$ are related to each other by the Bianchi identity, which in this case can be read off from (\ref{eq3.12}) and (\ref{eq3.13}):
\begin{equation}
\label{eq3.14} r \frac{\partial \lambda}{\partial r} = 2(\kappa-\lambda)\ .
\end{equation}

%
%

We summarize:

\begin{lemma}\label{lemma3.4}
Given a rotationally symmetric, smoothly conformally compactifiable, asymptotically hyperbolic metric $g_0$ with no closed minimal hypersurfaces, there is a $T>0$ such that the flow (\ref{eq3.1}) exists for $0\le t <T$, and is the pullback along $F_t^{-1}$ of the flow (\ref{eq3.3}). For each $t\in [0,T)$, the family of diffeomorphisms $F_t$ is smooth in $t$ and preserves $\sec \to -1$, $f\sim 1/r$, as $r\to\infty$. For each $t\in [0,T)$, the function $f$ defined by (\ref{eq3.4}) is smooth in $t$ and $r$ and obeys equation (\ref{eq3.9}).
\end{lemma}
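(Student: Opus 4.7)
The plan is to assemble this lemma from the preceding results (Theorem~\ref{theorem1.1} and Lemma~\ref{lemma3.1}) rather than prove anything essentially new; the remaining work is to extract a time $T$ on which the hypotheses of Lemma~\ref{lemma3.1} apply, to upgrade its pointwise-in-$t$ statements to joint smoothness in $(t,r)$, and to track the asymptotic structure through the pullback.

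First I would invoke Theorem~\ref{theorem1.1} on $g_0$ to produce a smooth normalized Ricci flow $g(t)$ on $[0,T_M)$ which remains smoothly conformally compact and asymptotically hyperbolic. By the uniqueness part of that theorem, if $\Phi\in SO(n)$ satisfies $\Phi^*g_0=g_0$ then $\Phi^*g(t)=g(t)$, so $g(t)$ inherits the rotational symmetry and admits the form \eqref{eq3.1} with $\phi(0,\rho)\equiv 1$ and $\psi(0,\rho)=\rho$. Because $g_0$ has no closed minimal hypersurface, the argument recalled in the proof of Lemma~\ref{lemma3.1} gives a positive lower bound on the mean curvature $H(0,\cdot)$ of constant-$\rho$ orbits; combined with $\phi_0\equiv 1$ and the continuity in $t$ of $\phi_t$ and $\partial_\rho\psi_t$, this yields a time $T\in(0,T_M]$ on which $\phi_t\geq c>0$ and $\partial_\rho\psi_t>0$ uniformly on compact radial intervals. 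These are exactly the conditions used by Lemma~\ref{lemma3.1}, so for each $t\in[0,T)$ that lemma delivers the diffeomorphism $F_t$ and a smooth pulled-back metric $\hat g(t)=(F_t^{-1})^*g(t)$ of the form \eqref{eq3.3}.

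Next I would promote the pointwise-in-$t$ statements of Lemma~\ref{lemma3.1} to joint smoothness in $(t,r)$ by applying the implicit function theorem to the smooth identity $\psi(t,\psi_t^{-1}(r))=r$: on any compact set with $r>0$ the derivative $\partial_\rho\psi$ is bounded below, so $\psi_t^{-1}(r)$ is jointly smooth, and then formula \eqref{eq3.4} makes $f$ jointly smooth for $r>0$. Smoothness across $r=0$ is inherited from the parity/evenness analysis already carried out in Lemma~\ref{lemma3.1}. For the asymptotic statement I would use that Theorem~\ref{theorem1.1} preserves conformal compactness and asymptotic hyperbolicity, so $\psi_t(\rho)\sim C(t)e^{\rho}$ as $\rho\to\infty$ and the sectional curvatures of $\hat g(t)$ tend to $-1$ at infinity. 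Inverting \eqref{eq3.13}, $\lambda\to -1$ forces $1/f^2\sim 1+r^2$, i.e. $f\sim 1/r$, and the pullback relation ensures that $F_t$ intertwines the boundary-defining functions.

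Finally, the PDE \eqref{eq3.9} follows from the derivation already laid out in the discussion preceding the lemma: the generator $X_t$ of the inverse diffeomorphism flow is computed in \eqref{eq3.6}--\eqref{eq3.7}, the pulled-back metric evolves according to \eqref{eq3.8}, and extracting its $rr$-component yields \eqref{eq3.9}. The step requiring the most care is the behaviour at the singular endpoint $r=0$: individual summands in $X_t$ and on the right-hand side of \eqref{eq3.9} blow up there, and it is precisely the expansion $f\sim 1+c(t)r^2+\cdots$ produced by the parity analysis of Lemma~\ref{lemma3.1} that makes these singular terms combine to give a genuinely smooth evolution through the origin. This singular-endpoint compatibility is the only genuine obstacle; everything else is a consolidation of facts already established.
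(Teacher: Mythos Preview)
Your proposal is correct and matches the paper's approach exactly: Lemma~\ref{lemma3.4} is presented in the paper as a summary (``We summarize:'') with no separate proof, the content being precisely the assembly of Theorem~\ref{theorem1.1}, Lemma~\ref{lemma3.1}, and the derivation of \eqref{eq3.6}--\eqref{eq3.9} that you describe. One small slip: you write $\psi(0,\rho)=\rho$, but with $\rho$ chosen as $g_0$-distance one only has $\phi(0,\rho)\equiv 1$; the initial area-radius $\psi_0(\rho)$ need not equal $\rho$ (only $\psi_0(\rho)\sim\rho$ as $\rho\to 0$), though this does not affect your argument.
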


\subsection{Extended diffeomorphism}

\noindent As stated above $F_t$ and its inverse are only diffeomorphisms of $\mathcal{M}$.  However we will show that this diffeomorphism extends to a diffeomorphism of $M$. We prove this for thoroughness, but is not used in the sequel.

We first state a condition that allows us to check the preservation of conformal compactness.

\begin{lemma} \label{lemma3.2}
Consider a rotationally symmetric metric $g(t)$, $t\in [0,T)$, of the form (\ref{eq3.1}), with $g(0)=g_0$ and with $\rho$ the $g_0$-distance from the centre of symmetry. Then, for any $t\in [0,T)$, $g(t)$ is conformally compact if and only if any number of applications of the operator $e^{\rho} \partial_{\rho}$ to both $\phi_t^2(\rho)$ and $e^{-2\rho} \psi_t^2(\rho)$ remain bounded as $\rho \to \infty$ with $t$ fixed.
\end{lemma}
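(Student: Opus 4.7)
The plan is to identify a boundary defining function adapted to the $g_0$-distance coordinate and then recast conformal compactness as smoothness of the rescaled metric coefficients in that coordinate.

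First I would introduce $x := e^{-\rho}$ as a candidate defining function. Since $g_0$ is asymptotically hyperbolic and $\rho$ is $g_0$-distance from the origin, one has $\rho \sim -\log x$ at infinity, which is the standard relationship between an arc length coordinate and a defining function on an AH end. A direct computation yields
\[
dx = -x\, d\rho,\qquad d\rho^2 = \frac{dx^2}{x^2},\qquad e^{\rho}\partial_{\rho} = -\partial_x .
\]
Substituting into (\ref{eq3.1}) gives
\[
x^2 g(t) \;=\; \phi_t^2(\rho)\, dx^2 \;+\; e^{-2\rho}\psi_t^2(\rho)\, g(\mathbb{S}^{n-1},{\rm can}),
\]
so the rescaled metric $x^2 g(t)$ is an \emph{orthogonal} warped product whose coefficients are precisely the two functions named in the statement.

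Second, I would observe that because the warped product is diagonal, $x^2 g(t)$ extends smoothly and non-degenerately to a collar of $\{x=0\}$ if and only if each coefficient, viewed as a function of $x$ (with the angular variables as trivial parameters), extends smoothly to $x=0$. In other words, conformal compactness at the end in question is equivalent to $\phi_t^2(\rho)$ and $e^{-2\rho}\psi_t^2(\rho)$, regarded as functions of $x$, being $C^{\infty}$ across $x=0$.

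Third, I would invoke the elementary fact that a function $F$ on $(0,\varepsilon)$ extends to a smooth function at $x=0$ if and only if $\partial_x^k F$ is bounded near $x=0$ for every integer $k \geq 0$. The forward direction is trivial. For the reverse direction, boundedness of $\partial_x F$ makes $F$ Lipschitz, hence $F$ extends continuously to $x=0$; boundedness of $\partial_x^{k+1}F$ then inductively implies that $\partial_x^k F$ is Lipschitz and thus extends continuously, so $F \in C^{\infty}([0,\varepsilon))$. Using the identity $\partial_x^k = (-1)^k (e^{\rho}\partial_{\rho})^k$ converts the boundedness of iterated $\partial_x$-derivatives at $x=0$ into boundedness of iterated $e^{\rho}\partial_{\rho}$-derivatives as $\rho\to\infty$, which is exactly the condition in the lemma. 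Applying this criterion to $\phi_t^2$ and $e^{-2\rho}\psi_t^2$ closes the equivalence.

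The only genuinely delicate point is the lemma in the third paragraph connecting pointwise boundedness of all derivatives with smoothness at the boundary; this is the standard Hadamard-type observation and presents no real obstacle. The remainder of the argument is a change-of-variable computation together with the definition of conformal compactness.
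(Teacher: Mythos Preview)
Your proposal is correct and follows essentially the same route as the paper: both introduce $e^{-\rho}$ as a boundary defining function, compute the rescaled metric as an orthogonal warped product with coefficients $\phi_t^2$ and $e^{-2\rho}\psi_t^2$, identify $\partial_x = -e^{\rho}\partial_{\rho}$, and then use the Lipschitz extension argument to pass between boundedness of iterated derivatives and smooth extendability to the boundary. The only differences are notational (you write $x$ where the paper writes $\Omega$) and that you are slightly more explicit about the inductive Lipschitz step.
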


\begin{proof}
Recall that $\Omega = e^{-\rho}$ is a boundary defining function for $M$ the compact manifold with boundary that contains $\mathcal{M}$. The condition that $g$ is conformally compact means that $\Omega^2 g$ extends to a smooth metric on $M$. Thus, replacing the $\rho$ coordinate by $\Omega$ and multiplying by $\Omega^2$, we obtain
\begin{equation}
\label{eq3.15}
\Omega^2 g(t) = \phi_t^2(-\log \Omega) d \Omega^2 + \Omega^2 \psi_t^2(-\log \Omega) g\left (\bS^{n-1},{\rm can}\right ).
\end{equation}
Then $\phi_t^2(-\log \Omega)$ and $\Omega^2 \psi_t^2(-\log \Omega)$ are smooth on $M$; i.e., any number of $\Omega$-derivatives of these functions remains bounded as $\Omega \to 0$. Converting to $\rho$ we find $\partial_{\Omega} = - e^{\rho} \partial_{\rho}$, so that if $g$ is conformally compact, then for any $k \in \bN$, $(- e^{\rho} \partial_{\rho})^k (\phi_t^2(\rho))$ and $(- e^{\rho} \partial_{\rho})^k ( e^{-2\rho} \psi_t^2(\rho))$ remain bounded as $\rho \to \infty$. Conversely, if this condition holds for any $k$ as $\rho \to \infty$, then we see that any $\Omega$-derivative of $\phi_t^2(-\log \Omega)$ and $\Omega^2 \psi_t^2(-\log \Omega)$ remains bounded as $\Omega \to 0$. But then each $\Omega$-derivative of the compactified metric component satisfies a Lipschitz condition which allows us to extend each derivative to the manifold with boundary. So $g$ is conformally compact.
\end{proof}

\begin{lemma} \label{lemma3.3}
The diffeomorphism $F_t: \mathcal{M} \to \mathcal{M}: F_t(\rho,\theta^A)\mapsto (r,\theta^A)$ defined above extends to a diffeomorphism $F_t: M \to M$ of manifolds with boundary. Consequently $\hat{g} = \left (F_t^{-1}\right )^* g$ remains conformally compact.
\end{lemma}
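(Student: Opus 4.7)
The plan is to show that the $r$-compactification of $\mathcal{M}$, obtained by taking $\tilde{\Omega} := 1/r$ as boundary defining function, coincides (as a smooth manifold with boundary) with the original $\rho$-compactification $M$ for which $\Omega = e^{-\rho}$ is the defining function. Once this is established, the radial coordinate change $\rho \leftrightarrow r$ extends smoothly across $\partial M$, which is precisely the smooth extension of $F_t$ to a diffeomorphism $M \to M$. Conformal compactness of $\hat{g}$ then follows because $\hat{g}$ is the pullback of $g$ by a diffeomorphism of $M$.

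First I would apply Lemma \ref{lemma3.2} to $g(t)$: by Theorem \ref{theorem1.1}, $g(t)$ remains smoothly conformally compact, so $\Omega^2 \psi_t^2(-\log\Omega)$ has bounded $\Omega$-derivatives of all orders as $\Omega \to 0$, and hence extends to a smooth function on a collar of $\partial M$. Setting
\[
u(\Omega) := \Omega\, \psi_t(-\log\Omega),
\]
this says that $u^2$ is smooth at $\Omega = 0$. Asymptotic hyperbolicity gives $\psi_t(\rho) \sim C(t)\, e^{\rho}$ as $\rho \to \infty$, hence $u(0) = C(t) > 0$, and the positive square root $u$ is itself smooth and positive on a collar of $\partial M$. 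This extraction of smoothness and positivity of $u$ from the criterion of Lemma \ref{lemma3.2} is the main technical step of the proof; the remainder is essentially bookkeeping.

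With $u$ smooth and positive, I then compute
\[
\tilde{\Omega} \,=\, \frac{1}{r} \,=\, \frac{1}{\psi_t(-\log\Omega)} \,=\, \frac{\Omega}{u(\Omega)},
\]
which is smooth in $\Omega$ near $0$, vanishes to first order at $\Omega = 0$ with $\partial_\Omega \tilde{\Omega}|_{\Omega = 0} = 1/u(0) > 0$, and is positive in the interior. Hence $\tilde{\Omega}$ is an equivalent boundary defining function on $M$. By the inverse function theorem (of the same style as the argument announced in the proof of Lemma \ref{lemma3.1}), the radial map $\Omega \mapsto \tilde{\Omega}(\Omega)$ is a smooth diffeomorphism of a collar of $\partial M$. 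Combined with the interior diffeomorphism supplied by Lemma \ref{lemma3.1}, this produces the desired smooth extension $F_t: M \to M$ of manifolds with boundary.

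For the conformal compactness of $\hat{g} = (F_t^{-1})^* g$: since $F_t^{-1}: M \to M$ is now a smooth diffeomorphism of a manifold with boundary and $g$ is conformally compact on $M$ with defining function $\Omega$, the pullback $\hat{g}$ is conformally compact on $M$ with defining function $(F_t^{-1})^* \Omega$. Indeed, $((F_t^{-1})^* \Omega)^2 \hat{g} = (F_t^{-1})^* (\Omega^2 g)$ is the pullback of a smooth Riemannian metric on $M$ by a diffeomorphism, hence smooth on $M$; and $(F_t^{-1})^* \Omega$ is smooth on $M$ and vanishes to first order on $\partial M$ because $\Omega$ does and $F_t^{-1}$ restricts to a diffeomorphism of $\partial M$.
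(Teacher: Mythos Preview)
Your proof is correct and follows the same overall strategy as the paper: show that the radial change of boundary defining functions from $\Omega = e^{-\rho}$ to $s = 1/r$ is a smooth diffeomorphism of a collar of $\partial M$, then combine with the interior diffeomorphism from Lemma~\ref{lemma3.1} and pull back. The technical execution differs. The paper introduces the algebra $\sE$ of functions for which all iterates of $e^{\rho}\partial_\rho$ stay bounded, and then verifies by hand that $\frac{ds}{d\Omega}\in\sE$, that $\partial_\Omega:\sE\to\sE$, and separately that $\frac{d\Omega}{ds}\in\sE$ with $\partial_s:\sE\to\sE$; boundedness of all derivatives gives Lipschitz extension in each direction. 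You instead observe that $u(\Omega)=\Omega\,\psi_t(-\log\Omega)$ is smooth and strictly positive at $\Omega=0$ (since $u^2$ is smooth by Lemma~\ref{lemma3.2} and $u(0)>0$ by asymptotic hyperbolicity), so that $s=\Omega/u(\Omega)$ is smooth with $\partial_\Omega s|_{\Omega=0}=1/u(0)>0$, and a single invocation of the inverse function theorem handles both the forward map and its inverse. Your route is more economical and avoids the algebra $\sE$; the paper's approach makes the boundedness of each individual derivative explicit but is otherwise equivalent.
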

\begin{proof}
The diffeomorphism $F_t$ is the identity in the angular directions and so we need only consider the radial direction. In the interior, the diffeomorphism $(0,\infty)_{\rho}$ to $(0,\infty)_r$ is given by $r = \psi_t(\rho)$. Rescaling, we set $\Omega = e^{-\rho}$ as before and let $s = \frac{1}{r}$. We will be interested in the composition $s = s(\Omega)$, initially defined for $\Omega > 0$ (and obviously $r>0$) by
\begin{equation}
\label{eq3.16}
s = \frac{1}{r} = \frac{1}{\psi_t(\rho)} = \frac{1}{\psi_t(- \log \Omega)}\ ,
\end{equation}
and its inverse $\Omega = \Omega(s)$, defined for $s > 0$ by
\begin{equation}
\label{eq3.17}
\Omega = e^{-\psi_t^{-1}(1/s)}\ .
\end{equation}
To show that these diffeomophisms extend to $\Omega = 0$ and  $s = 0$, we convert the requisite $\Omega$ and $s$ derivatives to $\rho$ and then argue that the control on $\phi_t$ and $\psi_t$ coming from conformal compactness via Lemma \ref{lemma3.2} gives a bound on the $\Omega$ and $s$ derivatives down to zero. Then these derivatives are Lipschitz and extend to the boundary, proving that $F_t$ is a diffeomorphism of manifolds with boundary.

Let $\sE$ denote the algebra of functions such that any number of $e^{\rho} \partial_{\rho}$ derivatives remain bounded as $\rho \to \infty$.
Obviously, $e^{-\rho} \in \sE$ and, by Lemma \ref{lemma3.2}, $e^{-\rho} \psi(\rho) \in \sE$. Further, $\partial_{\rho} \psi_t - \psi_t = e^{\rho} \partial_{\rho} ( e^{-\rho} \psi_t ) \in \sE$, so that $e^{-\rho} \partial_{\rho} \psi_t \in \sE$ as well. Consider the function $ s = \frac{1}{\psi(- \log \Omega)}$. We find
\begin{equation}
\label{eq3.18}
\frac{ds}{d\Omega} = \frac{1}{\Omega} \frac{\partial_{\rho}\psi_t(-\log \Omega)}{\psi_t^2(-\log \Omega)} = \frac{ \partial_{\rho}\psi_t(\rho)}{ e^{-\rho} \psi_t^2(\rho)} = \frac{ e^{-\rho}\partial_{\rho} \psi_t(\rho)}{ e^{-2\rho} \psi_t^2(\rho)} \in \sE
\end{equation}
and so $\frac{ds}{d\Omega}$ is bounded as $\rho \to \infty$ (equivalently $\Omega \to 0$). Noting that $\frac{\partial}{\partial \Omega} = - e^{\rho} \frac{\partial}{\partial \rho}: \sE \to \sE$, we conclude that the forward map extends to the boundary.

We now consider the inverse map. We may differentiate the equation $\psi_t^{-1} ( \psi_t( \rho ) ) = \rho$, to obtain $(\psi_t^{-1})'(r) = \frac{1}{\partial_{\rho}\psi_t(\rho)}$.\footnote
{Because of the absence of closed minimal hyperspheres, cf. Remark \ref{remark1.3}, this derivative exists everywhere (excluding the origin, since $r$ is a polar coordinate), but here we only need it to exist on a collar neighbourhood of infinity.}
Further, $\frac{\partial}{\partial s} = -\frac{\psi_t^2(\rho)}{\partial_{\rho}\psi_t(\rho)} \frac{\partial}{\partial \rho}$. Differentiating (\ref{eq3.17}), we obtain
\begin{equation}
\label{eq3.19}
\begin{split}
\frac{d \Omega}{ds} =&\, \frac{1}{s^2} e^{-\psi_t^{-1}(1/s)}\left (\psi_t^{-1}\right )' ( 1/s) =
\frac{e^{-\rho} \psi_t^2(\rho)}{ \partial_{\rho} \psi_t(\rho)}\\
=&\, (e^{-\rho} \psi_t(\rho)) \left( \frac{ e^{-\rho} \psi_t(\rho)}{ e^{-\rho} \partial_{\rho}\psi_t(\rho)} \right)\ ,
\end{split}
\end{equation}
so that $\frac{d \Omega}{ds} \in \sE$, which in turn shows that the $s$-derivative of $\Omega$ remains bounded as $\rho \to \infty$ ($s\to 0$). This same argument shows that $\frac{\partial}{\partial s} = -\frac{ e^{-2\rho} \psi_t^2(\rho)}{e^{-\rho}  \partial_{\rho}\psi_t(\rho)} e^{\rho} \frac{\partial}{\partial \rho}$, so that in fact $\frac{d}{ds}: \sE \to \sE$. So $F_t$ extends to a diffeomorphism of manifolds with boundary.

Finally, since $s$ and $\Omega$ pull back to each other under the rescaled diffeomorphism, a short computation shows that $\hat{g}$ is conformally compact.
\end{proof}

\section{Uniform $C^0$ Control of $f$}
\setcounter{equation}{0}

\subsection{Maximum principle}

\noindent As discussed above, the main issue we have to deal with is the possibility that the metric and curvature quantities that we wish to study become unbounded at the singular point at the origin of symmetry. We therefore would like a version of the usual maximum principle which is adapted to this situation.

We denote $\Omega_T=(0,T)\times (0,\infty)$, ${\bar \Omega}_T:=[0,T]\times [0,\infty)$, and ${\hat \Omega}_T :=(0,T]\times(0,\infty)$. Then

\begin{proposition}\label{proposition4.1}
Let $u\in C^{0,0}({\bar \Omega}_T)\cap C^{1,2}(\Omega_T)$ satisfy
\begin{equation}
\label{eq4.1} 0\le L(u) := a(t,r)\frac{\partial^2 u}{\partial r^2}+b(t,r)\frac{\partial u}{\partial r}+c(t,r)u
-\frac{\partial u}{\partial t}\end{equation}
with $0<C_1\le a(t,r)\le C_2$, on $\Omega_T$, with $a,b\in C^{(0,0)}(\Omega_T)$ and $c\in C^{0,0}({\hat \Omega}_T)$. Furthermore, assume that $u(t,0)= u_0$, that $u(t,r)\to u_{\infty}\in{\mathbb R}$ as $r\to \infty$, and that $\max \{ u_0,u_{\infty} \}\ge 0$. Either
\begin{enumerate}
\item $\sup_{{\bar \Omega}_T}u=\max \{ u_0,u_{\infty} \}$, or
\item $\sup_{{\bar \Omega}_T}u$ is a maximum realized at $(0,r^*)$ for some $r^*\in (0,\infty)$, or
\item $\sup_{{\bar \Omega}_T}u$ is a maximum realized at some point $(t^*,r^*)\in (0,T]\times (0,\infty)$, and $c(t^*,r^*)\ge 0$.
\end{enumerate}
\end{proposition}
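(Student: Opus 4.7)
The plan is to follow the standard weak parabolic maximum principle, adapted to the non-compact spatial domain and to the fact that $c$ need only be defined on $\hat\Omega_T$ (so at $t=0$ we cannot test its sign). Set $M := \sup_{\bar\Omega_T} u$. If $M = \max\{u_0, u_\infty\}$ then conclusion (1) holds and there is nothing to prove, so assume throughout that $M > \max\{u_0, u_\infty\} \geq 0$; in particular $M > 0$ and $M$ strictly exceeds both asymptotic values.

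The first substantive task is to show that $M$ is actually attained at some point $(t^*, r^*) \in [0,T] \times (0,\infty)$. Choose a maximizing sequence $(t_k, r_k) \in \bar\Omega_T$ with $u(t_k, r_k) \to M$ and, passing to subsequences, arrange $t_k \to t^* \in [0,T]$ and $r_k \to r^* \in [0,\infty]$. The case $r^* = 0$ is excluded by continuity of $u$ on $\bar\Omega_T$ together with $u(t,0) = u_0$, since this would force $M = u_0 < M$. The case $r^* = \infty$ is excluded by the asymptotic condition $u(t,r) \to u_\infty$ as $r \to \infty$, which would force $M = u_\infty < M$. Therefore $r^* \in (0,\infty)$ and, by continuity, $M = u(t^*, r^*)$. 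If $t^* = 0$ this is precisely conclusion (2).

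It remains to handle the case $t^* \in (0,T]$, $r^* \in (0,\infty)$, where $c(t^*, r^*)$ is defined. Since $u \in C^{1,2}(\Omega_T)$ and $(t^*, r^*)$ is an interior maximum in $r$ and a (possibly one-sided) maximum in $t$, elementary calculus gives
\begin{equation*}
\frac{\partial u}{\partial r}(t^*, r^*) = 0, \quad \frac{\partial^2 u}{\partial r^2}(t^*, r^*) \leq 0, \quad \frac{\partial u}{\partial t}(t^*, r^*) \geq 0,
\end{equation*}
where the last inequality uses a backward difference quotient if $t^* = T$ and two-sided if $t^* < T$. Substituting into the hypothesis $L(u) \geq 0$ and using $a > 0$ yields
\begin{equation*}
0 \leq a(t^*, r^*)\, u_{rr} + b(t^*, r^*)\, u_r + c(t^*, r^*)\, u - u_t \leq c(t^*, r^*)\, M,
\end{equation*}
and since $M > 0$ this forces $c(t^*, r^*) \geq 0$, giving conclusion (3).

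The main obstacle is the attainment step: one must rule out that a maximizing sequence escapes to the singular endpoint $r = 0$ or to $r = \infty$. The $r = 0$ end is comfortable because $u$ is continuous up to $\{r=0\}$ with a fixed value $u_0$ and $[0,T]$ is compact, so the convergence $u(t,r) \to u_0$ is automatically uniform in $t$. The $r = \infty$ end is the delicate point, since pointwise convergence in $r$ alone does not preclude ``pumping'' of the near-maximum to larger and larger $r_k$ with varying $t_k$. In the paper's intended application $u$ is a curvature-type quantity on an asymptotically hyperbolic Ricci flow with controlled asymptotics, so the limit $u(t,r) \to u_\infty$ is expected to be uniform in $t$ on finite time intervals; more abstractly, one could first replace $u$ by the penalization $u - \epsilon(r + r^{-1})$, which drives its supremum into a compact subset of $(0,\infty)$, apply the interior argument above to the modified operator, and then let $\epsilon \searrow 0$ to recover the statement.
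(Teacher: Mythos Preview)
Your argument is correct and takes a different, more elementary route than the paper. Once the positive maximum is located at $(t^*,r^*)\in(0,T]\times(0,\infty)$, you apply the first/second-derivative test directly to obtain $c(t^*,r^*)\,M\ge 0$. The paper instead sets $v=u-u(t^*,r^*)$, notes that $L(v)\ge -c(t,r)\,u(t^*,r^*)$, and invokes the \emph{strong} maximum principle (Lieberman, Theorem~2.7) on a shrinking family of rectangles about $(t^*,r^*)$: either $v\equiv 0$ nearby (forcing $c\ge 0$ there directly), or the strong principle would be violated unless $c>0$ somewhere in each rectangle, and passing to the limit gives $c(t^*,r^*)\ge 0$ by continuity of $c$ on $\hat\Omega_T$. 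Your approach is more transparent and avoids the appeal to Hopf; the paper's packaging handles the $t^*=T$ boundary case automatically, whereas your regularity hypothesis $u\in C^{1,2}(\Omega_T)$ with $\Omega_T$ open does not furnish $u_t$ at $t=T$, so the backward-difference remark needs the standard $u\mapsto u-\epsilon t$ (or $T'\nearrow T$) device to be made rigorous. On the attainment of the supremum you are in fact more scrupulous than the paper, which simply asserts it once cases (1) and (2) fail; you correctly flag that the exclusion of $r^*=\infty$ requires the limit $u(t,r)\to u_\infty$ to be uniform in $t\in[0,T]$, and this uniformity is indeed implicitly supplied by the asymptotically hyperbolic structure in each of the paper's applications. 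Your penalization sketch is a reasonable abstract alternative, though note that $L\bigl(r+r^{-1}\bigr)$ need not be bounded since $b$ is allowed to be singular at $r=0$; one would choose a penalty with controlled $L$-image, or simply rely on the uniform decay available in practice.
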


We have extended the continuity condition on $c$ to include $t=T$ even though (\ref{eq4.1}) holds only in $\Omega_T$ since otherwise the third alternative in the theorem could fail to hold as stated. This condition will always hold in our applications.

\begin{proof} If neither possibility 1 nor possibility 2 above applies, then $u$ achieves a positive maximum at some point $(t^*,r^*)\in (0,T]\times (0,\infty)$. Then $v(t,r)=u(t,r)-u(t^*,r^*)$ has maximum $v(t^*,r^*)=0$ in $(0,T]\times (0,\infty)$ and obeys $L(v)\ge -c(t,r)u(t^*,r^*)$. Consider a small rectangle $R=[t^*-\epsilon,t^*]\times[r^*-\epsilon,r^*+\epsilon]\subset (0,T]\times (0,\infty)$ about $(t^*,r^*)$, such that $u(t,r)>0$ in $R$. If $v(t,r)$ is not identically zero in $R$, it is necessary that $c(t,r)>0$ somewhere in $R$ so as not to contradict the strong maximum principle \cite[Theorem 2.7]{Lieberman}, and by repeating the argument as we take $\epsilon\to 0$, we obtain a sequence of points $(t_i,r_i)$ converging to $(t^*,r^*)$ such that $c(t_i,r_i)>0$. Thus $c(t^*,r^*)\ge 0$. Alternatively, if $v(t,r)\equiv 0$ in $R$, then $u(t,r)\equiv u(t^*,r^*)$ and therefore $u(t,r)>0$ in $R$, and also $L(v)=L(0)=0$ in $R$. But since $L(v)\ge -c(t,r)u(t^*,r^*)$, then $c(t,r)\ge 0$ throughout $R$, and so again $c(t^*,r^*)\ge 0$. \end{proof}

Though the inequality (\ref{eq4.1}) applies when right-hand side is linear in $u$, we will apply the proposition when the right side of the equation is only quasi-linear. Thus the coefficients $a$, $b$, and $c$ may arise from compositions of continuous functions; e.g., $b(t,r) = \beta(t,r;p,q)$ with $p=u(t,r)$ and $q=\frac{\partial u}{\partial r}(t,r)$. Alternatively, in the proof one instead could appeal to the strong maximum principle for quasi-linear equations \cite[Theorem 2]{Kusano}.  We conclude this subsection with the corresponding parabolic minimum principle.

\begin{corollary}\label{corollary4.2}
Let $u\in C^{0,0}({\bar \Omega}_T)\cap C^{1,2}(\Omega_T)$ satisfy
\begin{equation}
\label{eq4.2} 0\ge L(u) := a(t,r)\frac{\partial^2 u}{\partial r^2}+b(t,r)\frac{\partial u}{\partial r}+c(t,r)u
-\frac{\partial u}{\partial t}\end{equation}
with $0<C_1\le a(t,r)\le C_2$, on $\Omega_T$, with $a,b\in C^{(0,0)}(\Omega_T)$ and $c\in C^{0,0}({\hat \Omega}_T)$. Furthermore, assume that $u(t,0)= u_0$, that $u(t,r)\to u_{\infty}\in{\mathbb R}$ as $r\to \infty$, and that $\min \{ u_0,u_{\infty} \}\le 0$. Either
\begin{enumerate}
\item $\inf_{{\bar \Omega}_T}u=\min \{ u_0,u_{\infty} \}$, or
\item $\inf_{{\bar \Omega}_T}u$ is a minimum realized at $(0,r^*)$ for some $r^*\in (0,\infty)$, or
\item $\inf_{{\bar \Omega}_T}u$ is a minimum realized at some point $(t^*,r^*)\in (0,T]\times (0,\infty)$, and $c(t^*,r^*)\ge 0$.
\end{enumerate}
\end{corollary}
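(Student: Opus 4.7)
The plan is to reduce Corollary \ref{corollary4.2} directly to Proposition \ref{proposition4.1} by applying the latter to the substitution $v := -u$. First I would verify that $v$ satisfies the hypotheses of Proposition \ref{proposition4.1}. Since the differential operator $L$ is linear in its argument (the coefficients $a$, $b$, $c$ are prescribed functions of $(t,r)$), we have $L(v) = L(-u) = -L(u) \ge 0$ on $\Omega_T$, which is exactly the differential inequality required. The coefficients $a, b, c$ are unchanged by the substitution, so the continuity assumptions and the uniform ellipticity bound $0 < C_1 \le a(t,r) \le C_2$ carry over verbatim, and $v \in C^{0,0}(\bar\Omega_T) \cap C^{1,2}(\Omega_T)$ since $u$ is.

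Next I would translate the boundary/asymptotic data. By construction $v(t,0) = -u_0$ and $v(t,r) \to -u_\infty$ as $r \to \infty$. The hypothesis $\min\{u_0, u_\infty\} \le 0$ is equivalent to $\max\{-u_0, -u_\infty\} \ge 0$, which is precisely the nonnegativity condition needed at the boundary of $\bar\Omega_T$ to invoke Proposition \ref{proposition4.1}.

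Finally, I would run Proposition \ref{proposition4.1} on $v$ and translate the resulting trichotomy back to $u$ via the identities $\sup_{\bar\Omega_T} v = -\inf_{\bar\Omega_T} u$ and $\max\{-u_0,-u_\infty\} = -\min\{u_0,u_\infty\}$. Alternative (1) for $v$ becomes $\inf u = \min\{u_0, u_\infty\}$; a maximum of $v$ achieved at $(0, r^*)$ becomes a minimum of $u$ achieved there; and a maximum of $v$ achieved at an interior point $(t^*, r^*) \in (0,T] \times (0,\infty)$ becomes a minimum of $u$ at that point. Crucially, the sign condition $c(t^*, r^*) \ge 0$ in the third alternative of Proposition \ref{proposition4.1} transfers unchanged, because $c$ is not affected by the sign flip on $u$. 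There is no real obstacle in this argument; the entire content is the linearity of $L$ in $u$ together with the evident symmetry of the conclusion under $u \mapsto -u$, so a very brief proof suffices.
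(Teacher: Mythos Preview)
Your proposal is correct and matches the paper's own proof, which simply says to replace $u$ by $-u$ and notes that the sign condition on $c(t^*,r^*)$ in the third alternative is not reversed. Your write-up is a slightly more explicit version of the same one-line idea.
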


\begin{proof} Replace $u$ by $-u$ in the proof of Proposition \ref{proposition4.1}. Note that the sign condition on $c(t^*,r^*)$ in the third alternative above is \emph{not} reversed.
\end{proof}

\subsection{No neckpinches form}

\noindent Letting $w=f^2$, we obtain from (\ref{eq3.9}) the evolution equation
\begin{equation}
\label{eq4.3} \frac{\partial w}{\partial t}
= \Delta w -\frac{1}{w} \left \vert dw
\right \vert^2+Y(w) -\frac{2(n-2)}{r^2} \left ( w-1 \right )\ .
\end{equation}
By expanding terms and dropping certain terms of definite sign, we obtain
\begin{equation}
\label{eq4.4} \frac{\partial w}{\partial t}
\le \frac{1}{w}\frac{\partial^2 w}{\partial r^2}+\left ( \frac{n-2}{r}-\frac{n}{rw}+(n-1)r \right )
\frac{\partial w}{\partial r} -\frac{2(n-2)}{r^2} \left ( w-1 \right )\ .
\end{equation}

There is a singular point at $r=0$ but the divisions by $w$ cause no problem since, by long-time existence, $w$ does not become zero for $t\le T$ (though it tends to zero as $r\to\infty$). We may now apply the maximum principle.

\begin{lemma}\label{lemma4.3}
Suppose that the flow (\ref{eq3.1}) obeys (\ref{eq1.1}) for $t\in [0,T_{\max})$ where $T_{\max}\in (0,\infty]$ is the maximal time of existence. Furthermore suppose that the initial metric $g_0=g(0,\cdot)$ does not admit a minimal hypersphere.\footnote
{We remind the reader that the absence of initial minimal hyperspheres follows from our assumption $\lambda(0)\le 0$, as explained in Remark \ref{remark1.3}. As well, from an elementary argument presented as part of the proof of Remark \ref{remark1.3}, we see that the absence of minimal hyperspheres at any time $t$ precludes the existence of any closed minimal hypersurface at time $t$.}
Then $f$ is uniformly bounded for $t\in [0,T_{\max}]$, no minimal hypersphere forms, and the coordinate transformation (\ref{eq3.4}) exists (and is invertible) for $t\in [0,T_{\max})$.
\end{lemma}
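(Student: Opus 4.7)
The plan is to bound $f$ uniformly by applying the parabolic maximum principle (Proposition \ref{proposition4.1}) to $v := w - 1 = f^2 - 1$, exploiting the fact that the zeroth-order coefficient $-\frac{2(n-2)}{r^2}$ appearing in the evolution inequality for $v$ is strictly negative, which prevents $v$ from attaining a strict interior maximum.

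First I would rewrite (\ref{eq4.4}) in the form required by Proposition \ref{proposition4.1}:
\begin{equation*}
\frac{\partial v}{\partial t} \le \frac{1}{w}\frac{\partial^2 v}{\partial r^2} + \left(\frac{n-2}{r} - \frac{n}{rw} + (n-1)r\right)\frac{\partial v}{\partial r} - \frac{2(n-2)}{r^2}\, v,
\end{equation*}
which is immediate since $\partial_t w = \partial_t v$ and $\partial_r^k w = \partial_r^k v$. The boundary and asymptotic values follow from the structure established in Section 3: $f(t,0) = 1$ gives $v(t,0) = 0$, and since $g(t)$ remains conformally compact AH on $[0,T_{\max})$ by Theorem \ref{theorem1.1}, $f(t,r)\sim 1/r$ as $r\to\infty$, so $v(t,r) \to -1$. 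In particular $\max\{v_0, v_\infty\} = 0 \ge 0$, matching the hypothesis of Proposition \ref{proposition4.1}.

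Fixing any $T \in (0, T_{\max})$ and applying Proposition \ref{proposition4.1}, the coefficient $c(t,r) = -\frac{2(n-2)}{r^2}$ is strictly negative on $\Omega_T$, so alternative (3) of the proposition (an interior maximum where $c \ge 0$) is excluded. The supremum of $v$ over $\bar\Omega_T$ is therefore attained either as $\max\{v_0, v_\infty\} = 0$ or at $t = 0$. The hypothesis that $g_0$ admits no minimal hypersphere means, via $H = (n-1)/(rf)$, that $M_0 := \sup_r f(0,r) < \infty$. Hence
\begin{equation*}
f^2 \le \max\{1, M_0^2\}\ \text{ uniformly on }[0,T_{\max}) \times [0,\infty).
\end{equation*}
From this uniform bound, the mean curvature $H(t,r) = (n-1)/(rf(t,r))$ is strictly positive at every $r > 0$ and $t < T_{\max}$, so no minimal hypersphere forms along the flow. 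The monotonicity of $\psi_t$ in $\rho$ and hence the invertibility of the coordinate change (\ref{eq3.4}) then follow from the same argument used in Lemma \ref{lemma3.1}, now extended to all $t \in [0, T_{\max})$.

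The main technical subtlety I expect is that the leading coefficient $1/w$ is not uniformly bounded above on $\Omega_T$ (it diverges as $r \to \infty$), so Proposition \ref{proposition4.1} does not literally apply on the full unbounded spatial domain. I would handle this by applying the proposition on truncations $(0,T) \times (0,R)$ where $1/w$ is bounded above, using the asymptotic $v \to -1$ to control the boundary term at $r = R$, and then letting $R \to \infty$ to recover the global bound.
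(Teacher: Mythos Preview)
Your approach is essentially the paper's: apply Proposition~\ref{proposition4.1} to $u=w-1$ with $c(t,r)=-\tfrac{2(n-2)}{r^2}<0$ to rule out an interior maximum, then read off the uniform bound on $f$ and conclude via $H=(n-1)/(rf)$ that no minimal hypersphere forms. One organizational point the paper makes explicit that you leave implicit: the evolution inequality \eqref{eq4.4} for $w$ is only known to hold while the area-radius gauge is valid, i.e., before any minimal hypersphere appears, so the paper first sets $T_0$ equal to the first such time, runs the maximum-principle argument on $[0,T_1]$ with $T_1=\min\{T_0,T_{\max}\}$, and then uses the resulting bound on $f$ to force $T_1=T_{\max}$; your version assumes the inequality on all of $[0,T_{\max})$ from the outset, which is circular without this bootstrap. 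Conversely, your closing remark about truncating in $r$ to keep $a=1/w$ bounded above is a genuine technical point that the paper's proof passes over silently.
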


\begin{proof} By (\ref{eq3.4}) and the condition that no minimal hypersphere is present initially, we then have that $\sup_r f(0,r)\le C$ for some $C\ge 1$ (since by smoothness at $t=0$, we have $f(0,0)=1$). Now let $T_0>0$ be the first time at which a minimal hypersphere forms along the flow (\ref{eq3.1}), and let $T_1:=\min \{ T_0, T_{\max} \}$. Consider expression (\ref{eq4.4}) on $(0,T_1]\times (0,\infty)$, so as to avoid the singularity at $r=0$. This expression has the same form as (\ref{eq4.1}) with $u=w-1$, $a=\frac{1}{w}$, $b=\frac{n-2}{r}-\frac{1}{rw}+(n-1)r$, and $c=-\frac{2(n-2)}{r^2}$. As well, $w=1$ at $r=0$ and $w\to 0$ as $r\to\infty$, so Proposition \ref{proposition4.1} applies and asserts that $w(t,r)$ is bounded above by $C^2$. Thus $f$ is uniformly bounded on $(0,T_1]\times (0,\infty)$. But by (\ref{eq3.5}), a compact minimal hypersurface cannot form unless $f$ diverges, and so $T_1=T_{\max}$.
\end{proof}

We note in passing that this does not forbid compact minimal hypersurfaces from forming if some are already present initially, since then the assumption $\sup_{x\in{\cal M}} f(0,x)=C$ does not hold and our coordinate system fails to cover the initial manifold.

\begin{corollary}[Preservation of nonpositive $\lambda$] \label{corollary4.4}
If $\sup \lambda(0,r)\le 0$ for $r\in[0,\infty)$, then $\sup_r \lambda(t,r)\le 0$ for $(t,r)\in [0,T_{\max}]\times [0,\infty)$.
\end{corollary}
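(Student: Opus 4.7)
The plan is to reduce the statement to the bound $f \le 1$ and apply Proposition \ref{proposition4.1} to the same evolution inequality (\ref{eq4.4}) already exploited in the proof of Lemma \ref{lemma4.3}, but with a sharper initial bound supplied by the hypothesis. Since $\lambda = (w-1)/(r^2 w)$ with $w = f^2 > 0$, the nonpositivity of $\lambda$ at fixed $(t,r)$ with $r>0$ is equivalent to $u := w - 1 \le 0$, and by continuity the same follows at $r=0$; the hypothesis $\lambda(0,\cdot) \le 0$ thus becomes $u(0, \cdot) \le 0$.

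I would then rewrite (\ref{eq4.4}) in the form $0 \le L(u) = a\,\partial_r^2 u + b\,\partial_r u + c u - \partial_t u$ required by Proposition \ref{proposition4.1}, with $a = 1/w$, $b = (n-2)/r - n/(rw) + (n-1)r$, and crucially $c = -2(n-2)/r^2$. The boundary/asymptotic values of $u$ are $u(t, 0) = 0$ (smoothness at the pole forces $f(t, 0) = 1$) and $u(t, r) \to -1$ as $r \to \infty$ (from $f(t, r) \sim 1/r$, preserved by Theorem \ref{theorem1.1}). In particular $\max\{u_0, u_\infty\} = 0$, so the hypotheses of Proposition \ref{proposition4.1} are satisfied.

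The three alternatives are then dispatched immediately: (i) $\sup u = \max\{u_0,u_\infty\}=0$, so $u \le 0$; (ii) $\sup u$ is attained at $t = 0$, whence $\sup u \le 0$ by the hypothesis $u(0,\cdot)\le 0$; (iii) $\sup u$ is attained at an interior point $(t^*, r^*)$ with $c(t^*, r^*) \ge 0$, which is impossible since $c(t,r) = -2(n-2)/r^2 < 0$ for every $r > 0$ and $n \ge 3$. In all admissible cases $u \le 0$ on $[0, T_{\max}] \times [0, \infty)$, which is the desired inequality $\lambda \le 0$.

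The main technical point, inherited from Lemma \ref{lemma4.3} rather than new to this corollary, is that $a = 1/w$ fails to be uniformly bounded above since $w \to 0$ at infinity. This is handled exactly as there: the uniform asymptotic $u \to -1$ as $r \to \infty$ prevents any supremum from escaping to infinity, so one localizes to $r \in (0, R]$, where smoothness of the flow together with the upper bound on $f$ from Lemma \ref{lemma4.3} gives $w$ bounded away from zero and hence $a$ bounded in both directions, and then lets $R \to \infty$.
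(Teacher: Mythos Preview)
Your proof is correct and is essentially the paper's argument unpacked: the paper simply invokes Lemma \ref{lemma4.3} with $C=1$ (since $\lambda(0,\cdot)\le 0$ forces $\sup_r f(0,r)=1$), whereas you rerun the maximum-principle step of that lemma directly on $u=w-1$, which amounts to the same thing. Your explicit handling of the unbounded $a=1/w$ via localization in $r$ is a point the paper's proof of Lemma \ref{lemma4.3} leaves implicit.
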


\begin{proof} We first observe that if $\sup \lambda(0,r)\le 0$, then $\sup f(0,r)\le 1$ by equation \eqref{eq3.13}, and then in fact we have $\max f(0,r)=1$ since $f(0,0)=1$ by smoothness. But by Lemma \ref{lemma4.3}, if $\max f(0,r)= 1$, then $\max f(t,r)= 1$ for all $(t,r)\in [0,T_{\max}]\times[0,\infty)$, and then it follows from equation \eqref{eq3.13} that $\sup\lambda(t,r)\le 0$ all along the flow.
\end{proof}

In the next section we will prove decay estimates for $\lambda$.

\section{Evolution of $\lambda$}
\setcounter{equation}{0}

\subsection{Evolution equation for $\lambda$}

\noindent From (\ref{eq4.3}), we can write
\begin{equation}
\label{eq5.1}
\begin{split}
\frac{\partial}{\partial t} \left ( 1-\frac{1}{f^2} \right )
=&\ \Delta \left ( 1-\frac{1}{f^2} \right )+f^2 \left \vert d \left ( 1-\frac{1}{f^2} \right ) \right \vert^2 +Y \left ( 1-\frac{1}{f^2} \right )\\
&\ +\frac{2(n-2)}{r^2}\left ( 1-\frac{1}{f^2} \right )^2
-\frac{2(n-2)}{r^2}\left ( 1-\frac{1}{f^2} \right )\ .
\end{split}
\end{equation}
We also need several identities, the first of which is
\begin{equation}
\label{eq5.2}
\begin{split}
\Delta r =&\,\, \frac{1}{r^{n-1}f}\frac{\partial}{\partial r} \left (
\frac{r^{n-1}}{f} \right ) = \frac{n-1}{rf^2}
-\frac{1}{f^3} \frac{\partial f}{\partial r}\\
&=\,\, -(n-1)\frac{1}{r}\left ( 1 -\frac{1}{f^2} \right ) +\frac{n-1}{r}
-r\kappa\\
&=\,\, -nr\lambda -r\left ( \kappa -\lambda \right )+\frac{n-1}{r}\\
&=\,\, -nr\lambda-\frac12 r^2 \frac{\partial \lambda}{\partial r} +\frac{n-1}{r}\ .
\end{split}
\end{equation}
By straightforward manipulations, some of which use (\ref{eq3.13}), we then further obtain the identities
\begin{eqnarray}
\label{eq5.3} \frac{1}{r^2} d \left ( 1-\frac{1}{f^2} \right ) &=&
d \lambda +2\lambda \frac{dr}{r}\ , \\
\label{eq5.4} \frac{1}{r^2}f^2 \left \vert d \left ( 1-\frac{1}{f^2} \right )
\right \vert^2 &=& r^2 f^2 \vert d\lambda \vert^2 +4r\lambda f^2 dr
\cdot d\lambda +4\lambda^2\ ,\\
\label{eq5.5} \frac{1}{r^2}\Delta \left ( 1-\frac{1}{f^2} \right ) &=& \Delta
\lambda +\left ( 4-r^2f^2\lambda \right )\frac{dr}{r}\cdot d\lambda
-2(n+1)\lambda^2+\frac{2n}{r^2}\lambda\\
\label{eq5.6} \frac{1}{r^2} Y\left ( 1-\frac{1}{f^2} \right ) &=&
\left ( nr\lambda -\frac{2}{r} +(n-1)r \right ) f^2 dr \cdot d\lambda
\nonumber\\
&& +2n\lambda^2+2\left ( n-1 -\frac{2}{r^2} \right ) \lambda \ ,
\end{eqnarray}
where $dr\cdot d\lambda:=g^{-1}(dr,d\lambda)\equiv \frac{1}{f^2} \frac{\partial \lambda}{dr}$. Multiplying (\ref{eq5.1}) by $1/r^2$ and using the above equations, we obtain the desired evolution equation
\begin{equation}
\label{eq5.7} \frac{\partial \lambda}{\partial t}
= \Delta \lambda + Z\cdot d\lambda +r^2f^2\left \vert d\lambda \right \vert^2
+2(n-1)\lambda (\lambda +1)\ ,
\end{equation}
where we define
\begin{equation}
\label{eq5.8}
Z:= \left [ \frac{(n+1)}{r}-\frac{(n-1)}{rf^2}+(n-1)r \right ] f^2 dr
= \left [ \frac{2}{r} +(n-1)r(\lambda+1) \right ] f^2dr\ .
\end{equation}

\subsection{Decay of $\lambda$}

\noindent It is convenient to define $\Lambda(t,r)$ by
\begin{equation}
\label{eq5.9} \lambda(t,r)=:-1+\Lambda(t,r)e^{-2(n-1)\delta^2t}\ ,
\end{equation}
where $\delta$ will be assigned values $\delta=0$ and $\delta=1$ when needed below. Then
\begin{equation}
\label{eq5.10} \frac{\partial \Lambda}{\partial t}
= \Delta \Lambda + Z\cdot d\Lambda +e^{-2(n-1)\delta^2 t}r^2f^2\left \vert d\Lambda \right \vert^2+2(n-1)\left ( \lambda+\delta^2\right )\Lambda\ .
\end{equation}
The idea now is to show that $\Lambda$ is bounded uniformly (and, even better, converging) in $t$. A lower bound seems obvious from inspection of (\ref{eq5.10}). However, the norm of $Z$ diverges at $r=0$ so when (\ref{eq5.10}) is expressed in any coordinate system that covers the origin, the result is a singular PDE. This was also the case with $Y$ in (\ref{eq4.3}), and there the strategy was to work on the domain $[0,T]\times [\epsilon,1/\epsilon]$, $T < T_{\max}$, and then take $\epsilon\searrow 0$, using that $w\to 1$ as $r\searrow 0$. We will follow the same strategy here, but this alone is insufficient because we have no uniform control of $\Lambda$ at the origin. Instead, we borrow a trick from \cite{OW} and study the functions
\begin{equation}
\label{eq5.11} \Lambda_{\alpha}:=\frac{r^{\alpha}}{(r^{\alpha} + \alpha)}\Lambda\ , \ \alpha\in(0,1)\ ,
\end{equation}
subsequently taking $\alpha\searrow 0$. The factor $\frac{r^{\alpha}}{r^{\alpha}+\alpha}$ is chosen because we will have $\Lambda_{\alpha}(t,0)= 0$, while maintaining $\Lambda_{\alpha}(t,r)\sim\Lambda\to 0$ as $r\to \infty$, and we will have $\lim_{\alpha}\Lambda_{\alpha}=\Lambda$ (non-uniformly).

We again need some identities. First,
\begin{equation}
\label{eq5.12} \Lambda = \left ( 1 + \frac{\alpha}{r^{\alpha}} \right ) \Lambda_{\alpha}\ ,
\end{equation}
so
\begin{equation}
\label{eq5.13} d\Lambda = \left ( 1+\frac{\alpha}{r^{\alpha}} \right )
d\Lambda_{\alpha} -\frac{\alpha^2}{r^{\alpha+1}} \Lambda_{\alpha} dr\ .
\end{equation}
Omitting the exponential factor for brevity, this allows us to write out the term in (\ref{eq5.10}) that is quadratic in $d\Lambda$ as
\begin{equation}
\label{eq5.14}
\begin{split}
r^2f^2 |d\Lambda|^2 =&\, r^2f^2 \left ( 1+\frac{\alpha}{r^{\alpha}} \right )^2
\left \vert d\Lambda_{\alpha} \right \vert^2 - 2\alpha^2 r^{1-\alpha}f^2\Lambda_{\alpha} \left (
1+\frac{\alpha}{r^{\alpha}} \right )dr\cdot d\Lambda_{\alpha}+\frac{\alpha^4}{r^{2\alpha}}\Lambda_{\alpha}^2\\
=&\, r^2f^2 \left ( 1+\frac{\alpha}{r^{\alpha}} \right )^2
\left \vert d\Lambda_{\alpha} \right \vert^2 - 2\alpha^2 r^{1-\alpha}f^2\Lambda_{\alpha} \left (
1+\frac{\alpha}{r^{\alpha}} \right )dr\cdot d\Lambda_{\alpha}\\
&\, +\frac{\alpha^4e^{2(n-1)\delta^2t}}{r^{\alpha}(r^{\alpha}+\alpha)}(\lambda+1) \Lambda_{\alpha}\ ,
\end{split}
\end{equation}
using $f^2|dr|^2=1$. In the last line, we have rewritten the quadratic combination $\Lambda_{\alpha}^2$ in terms of $(\lambda+1)\Lambda_{\alpha}$. This is for convenience in what follows (though the $\Lambda_{\alpha}^2$ form also plays a role below), where by imposing sign conditions on $\lambda$ we will convert the differential equation for $\Lambda_{\alpha}$ into an inequality with linear nonderivative terms (\emph{cf.} the nonderivative term in equation (\ref{eq5.10})).

The $Z\cdot d\Lambda$ term is
\begin{equation}
\label{eq5.15}
\begin{split}
Z\cdot d\Lambda =&\,\, \left ( 1+\frac{\alpha}{r^{\alpha}} \right ) Z\cdot d\Lambda_{\alpha}
-\frac{\alpha^2}{r^{\alpha+1}} \Lambda_{\alpha} \left ( \frac{2}{r}
+(n-1)r(\lambda+1) \right )\\
=&\,\, \left ( 1+\frac{\alpha}{r^{\alpha}} \right ) Z\cdot d\Lambda_{\alpha}-\frac{2\alpha^2}{r^{\alpha+2}}
\Lambda_{\alpha} -\frac{\alpha^2(n-1)}{r^{\alpha}}
\left ( \lambda+1\right ) \Lambda_{\alpha}\ .
\end{split}
\end{equation}

Next we have to work out the relationship between $\Delta \Lambda$
and $\Delta \Lambda_{\alpha}$. Taking the divergence of (\ref{eq5.13}), we
have
\begin{equation}
\label{eq5.16}
\begin{split}
\Delta \Lambda =&\,\, \left ( 1 + \frac{\alpha}{r^{\alpha}} \right ) \Delta \Lambda_{\alpha}
-\frac{2\alpha^2}{r^{\alpha+1}f^2}\frac{\partial \Lambda_{\alpha}}{\partial r} +\frac{\alpha^2(\alpha+1)}{r^{\alpha+2}f^2}\Lambda_{\alpha}
-\frac{\alpha^2}{r^{\alpha+1}}\Lambda_{\alpha} \Delta r\\
=&\,\, \left ( 1 + \frac{\alpha}{r^{\alpha}} \right ) \Delta \Lambda_{\alpha}
-\frac{2\alpha^2}{r^{\alpha+1}f^2}\frac{\partial \Lambda_{\alpha}}{\partial r} +\frac{\alpha^2(\alpha+1)}{r^{\alpha+2}}\Lambda_{\alpha}
-\frac{\alpha^2(\alpha+1)}{r^{\alpha}}\Lambda_{\alpha}\lambda\\
&\,\, +\frac{\alpha^2}{r^{\alpha+1}}\Lambda_{\alpha} \left [ nr\lambda+\frac12 r^2 \frac{\partial \lambda}{\partial r} -\frac{n-1}{r}\right ]\ ,
\end{split}
\end{equation}
where we replaced factors of $\frac{1}{r^2f^2}$ using $\frac{1}{r^2f^2}=\frac{1}{r^2}-\lambda$ and we used (\ref{eq5.2}) to replace $\Delta r$ by
\begin{equation}
\label{eq5.17}
\Delta r =\, -nr\lambda-\frac12 r^2 \frac{\partial \lambda}{\partial r}  +\frac{n-1}{r}\ .
\end{equation}
Next, we need to replace $\frac{\partial \lambda}{\partial r}$ in favour of $\frac{\partial \Lambda_{\alpha}}{\partial r}$, using \eqref{eq5.9} and \eqref{eq5.13}. After considerable manipulation, the result can be written as
\begin{equation}
\label{eq5.18}
\begin{split}
\Delta \Lambda =&\,\, \left ( 1 + \frac{\alpha}{r^{\alpha}} \right ) \Delta \Lambda_{\alpha}
+\frac{\alpha^2}{r^{\alpha}}\left [ \frac12 r\left ( \lambda+1 \right )-\frac{2}{rf^2}\right ] \frac{\partial\Lambda_{\alpha}}{\partial r}\\
&\,\, -\frac{\alpha^2}{r^{\alpha}}\left [ \frac{n-\alpha-2}{r^2}+\frac{\alpha^2}{2(r^{\alpha}+\alpha)}\right ]\Lambda_{\alpha}\\
&\,\, +\frac{\alpha^2}{r^{\alpha}}\left [ n-\alpha-1 -\frac{\alpha^2}{2(r^{\alpha}+\alpha)}\right ]
\lambda\Lambda_{\alpha} \ .
\end{split}
\end{equation}

Plugging all this into (\ref{eq5.10}) and organizing terms, we get the evolution equation for $\Lambda_{\alpha}$:
\begin{equation}
\label{eq5.19}
\begin{split}
\frac{\partial \Lambda_{\alpha}}{\partial t} =&\, \Delta \Lambda_{\alpha} +Z\cdot d\Lambda_{\alpha}
+e^{-2(n-1)\delta^2t}r^2\left ( 1+\frac{\alpha}{r^{\alpha}} \right )\left ( \frac{\partial \Lambda_{\alpha}}{\partial r}\right )^2\\
&\, -\frac{\alpha^2}{(r^{\alpha}+\alpha)}\left [ \frac32 r\left (\lambda +1\right ) +\frac{2}{rf^2}\right ] \frac{\partial \Lambda_{\alpha}}{\partial r}\\
&\, +2(n-1)\left ( \delta^2+\lambda\right ) \Lambda_{\alpha}\\
&\, -\frac{\alpha^2}{(r^{\alpha}+\alpha)}\left [n-1 -\frac{\alpha^2}{2(r^{\alpha}+\alpha)}+ \frac{(n-\alpha)}{r^2}\right ] \Lambda_{\alpha}\\
&\, +\frac{\alpha^3}{(r^{\alpha}+\alpha)}\left [ \frac{\alpha}{2(r^{\alpha}+\alpha)}-1\right ] \lambda\Lambda_{\alpha}\ .
\end{split}
\end{equation}
Comparing to (\ref{eq4.2}), we have
\begin{equation}
\label{eq5.20}
\begin{split}
c(t,r)=&\, 2(n-1)\left ( \delta^2+\lambda\right ) \\
&\, -\frac{\alpha^2}{(r^{\alpha}+\alpha)}\left [n-1 -\frac{\alpha^2}{2(r^{\alpha}+\alpha)}+ \frac{(n-\alpha)}{r^2}\right ] \\
&\, +\frac{\alpha^3}{(r^{\alpha}+\alpha)}\left [ \frac{\alpha}{2(r^{\alpha}+\alpha)}-1\right ] \lambda \ .
\end{split}
\end{equation}

\begin{proposition}\label{proposition5.1}
$ \lambda(t,r)\ge -1+e^{-2(n-1)t}\inf_{r\in[0,\infty)} \left ( \lambda(0,r)+1\right )$ for all $(t,r) \in [0,T_{\max})\times [0,\infty)$.
\end{proposition}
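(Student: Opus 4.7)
The plan is to apply the parabolic minimum principle Corollary~\ref{corollary4.2} to a cutoff of $\lambda+1$. Set $\delta=1$ in (\ref{eq5.9}) and write $\Lambda(t,r):=(\lambda(t,r)+1)e^{2(n-1)t}$, which satisfies (\ref{eq5.10}); the assertion is equivalent to $\Lambda\ge m$ on $[0,T_{\max})\times[0,\infty)$ where $m:=\inf_r(\lambda(0,r)+1)$. Since $\lambda(0,r)\to -1$ as $r\to\infty$ by asymptotic hyperbolicity, $m\le 0$.

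Applying the minimum principle directly to $\Lambda$ is hampered by the coordinate singularity at $r=0$, where $\Lambda$ has no controlled value. I would therefore use the cutoff $\Lambda_\alpha=\frac{r^\alpha}{r^\alpha+\alpha}\Lambda$ from (\ref{eq5.11}) for $\alpha\in(0,1)$, and let $\alpha\searrow 0$ at the end. By construction $\Lambda_\alpha(t,0)=0$ and $\Lambda_\alpha(t,r)\to 0$ as $r\to\infty$; since $r^\alpha/(r^\alpha+\alpha)\in[0,1]$ and $m\le 0$, the initial datum satisfies $\Lambda_\alpha(0,r)\ge\min\{\Lambda(0,r),0\}\ge m$. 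The function $\Lambda_\alpha$ satisfies the quasilinear PDE (\ref{eq5.19}); dropping the nonnegative gradient-quadratic term gives an inequality $L(\Lambda_\alpha)\le 0$ with $c$ as in (\ref{eq5.20}), so Corollary~\ref{corollary4.2} applies on $[0,T]\times[0,\infty)$ for any $T<T_{\max}$ with $u_0=u_\infty=0$.

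Of the three conclusions of Corollary~\ref{corollary4.2}, case (1) gives $\inf\Lambda_\alpha=0\ge m$ and case (2) gives $\inf\Lambda_\alpha=\Lambda_\alpha(0,r^*)\ge m$. Case (3) is the core: the infimum is attained at an interior point $(t^*,r^*)$ with $c(t^*,r^*)\ge 0$. Writing $\beta:=\alpha/(r^\alpha+\alpha)\in(0,1]$ and substituting $\lambda=-1+(1+\lambda)$ in (\ref{eq5.20}) to isolate $1+\lambda$, two $\alpha^2\beta^2/2$ contributions (one from the $-\alpha^2/[2(r^\alpha+\alpha)]$ piece of the $\alpha^2$-bracket, one from the $\alpha^3\lambda$-correction) cancel, leaving
\[
\bigl(2(n-1)+\alpha^2\beta(\beta/2-1)\bigr)(1+\lambda)\ge \alpha\beta\bigl((n-1-\alpha)+(n-\alpha)/r^2\bigr)
\]
at the minimum. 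The right-hand side is strictly positive and the coefficient of $1+\lambda$ on the left is positive for $\alpha\in(0,1)$ and $n\ge 2$, forcing $\lambda(t^*,r^*)>-1$. Hence $\Lambda(t^*,r^*)>0$ and $\Lambda_\alpha(t^*,r^*)>0\ge m$.

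In all cases $\Lambda_\alpha\ge m$ on $[0,T]\times[0,\infty)$. For fixed $r>0$, $\Lambda_\alpha\to\Lambda$ as $\alpha\searrow 0$, so $\Lambda\ge m$ on $(0,T]\times(0,\infty)$; continuity of $\Lambda$ at $r=0$ (from smoothness of $f$ with $f(t,0)=1$) extends the bound to the origin, and arbitrariness of $T<T_{\max}$ gives the bound on the entire domain. Undoing the substitution (\ref{eq5.9}) yields the claimed inequality. The main difficulty is case (3); the cutoff $r^\alpha/(r^\alpha+\alpha)$ is calibrated precisely so that the $\alpha^2\beta^2/2$ terms cancel and the surviving $\alpha\beta(n-\alpha)/r^2$ contribution dominates, giving the strict inequality $\lambda>-1$ at any would-be interior minimum of $\Lambda_\alpha$.
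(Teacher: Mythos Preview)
Your proof is correct and follows essentially the same approach as the paper: both use the cutoff $\Lambda_\alpha$ with $\delta=1$, apply Corollary~\ref{corollary4.2}, and rule out an interior minimum by analyzing the sign of $c$ from (\ref{eq5.20}). Your rearrangement isolating $(1+\lambda)$ is algebraically equivalent to the paper's substitution (\ref{eq5.21}) yielding (\ref{eq5.22}); you argue the contrapositive (if $c\ge 0$ at the minimum then $\Lambda_\alpha>0$ there) where the paper argues directly (if the minimum is negative then $c<0$), but these are the same computation.
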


\begin{proof}
Choose some $T<T_{\max}$. If $\Lambda_{\alpha}\ge 0$ for all $(t,r)\in [0,T]\times [0,\infty)$ then $\lambda+1\ge 0$ and then $\inf_{r\in[0,\infty)} \left ( \lambda(0,r)+1\right )=\lim_{r\to\infty} \left ( \lambda(0,r)+1\right )=0$, so the proposition is true.

Otherwise, $\Lambda_{\alpha}$ has a negative minimum on $[0,T]\times [0,\infty)$. In this case, consider equations (\ref{eq5.19}, \ref{eq5.20}) but set $\delta=1$ so that
\begin{equation}
\label{eq5.21}
\lambda=-1+e^{-2(n-1)t}\left ( 1+\frac{\alpha}{r^{\alpha}}\right )\Lambda_{\alpha}\ .
\end{equation}
Now replace the $\lambda$ factors in (\ref{eq5.20}) using (\ref{eq5.21}) and of course set $\delta=1$. This yields
\begin{equation}
\begin{split}
\label{eq5.22} c(t,r) = &\,-\frac{\alpha^2}{(r^{\alpha}+\alpha)}\left [n-1 -\alpha+ \frac{(n-\alpha)}{r^2}\right ] \\
&\, +e^{-2(n-1) t} \left [ 2(n-1) +\frac{\alpha\left ( 2(n-1)-\alpha^2\right )}{r^{\alpha}} +\frac{\alpha^4}{2r^{\alpha}(r^{\alpha}+\alpha)}\right ]\Lambda_{\alpha}\ .
\end{split}
\end{equation}
Take $\alpha>0$ small enough; indeed, $\alpha\in (0,1)$ will do. Now, if $\Lambda_{\alpha}$ achieves a negative minimum at $(t^*,r^*)$ in $(0,T]\times(0,\infty)$, then from the right-hand side of (\ref{eq5.22}) we obtain
\begin{equation}
\label{eq5.23}
c(t^*,r^*)<0\ .
\end{equation}
But then Corollary \ref{corollary4.2} would imply that $t^*=0$.

Thus in either case we may write $\Lambda_{\alpha}(t,r)\ge \inf_{r\in[0,\infty)}\Lambda_{\alpha}(0,r) = \inf_{r\in[0,\infty)} \frac{r^{\alpha}}{r^{\alpha}+\alpha} \Lambda(0,r)\ge \inf_{r\in[0,\infty)}\Lambda(0,r)$, where the last inequality holds because the infimum is not positive. This  yields $\Lambda_{\alpha}(t,r) \ge \inf_{r\in[0,\infty)} \left ( \lambda(0,r)+1\right )$ and since this bound is independent of $\alpha$, we may take $\alpha\searrow 0$, yielding $\Lambda(t,r)\ge \inf_{r\in[0,\infty)} \left ( \lambda(0,r)+1\right )$. Then the proposition follows from (\ref{eq5.21}) and from the fact that the bound does not depend on $T$ so we can take $T\nearrow T_{\max}$. \end{proof}

On the other hand, by selecting other values of $\delta$ we can bound $\lambda$ from above. For example, setting $\delta=0$ in (\ref{eq5.19}, \ref{eq5.20}) yields
\begin{equation}
\label{eq5.24}
\begin{split}
c(t,r)=&\, 2(n-1)\lambda -\frac{\alpha^2}{(r^{\alpha}+\alpha)}\left [n-1 -\frac{\alpha^2}{2(r^{\alpha}+\alpha)}+ \frac{(n-\alpha)}{r^2}\right ] \\
&\, +\frac{\alpha^3}{(r^{\alpha}+\alpha)}\left [\frac{\alpha}{2(r^{\alpha}+\alpha)}-1\right ]\lambda\ ,
\end{split}
\end{equation}
with
\begin{equation}
\label{eq5.25}
\lambda=-1+\left ( 1+\frac{\alpha}{r^{\alpha}}\right )\Lambda_{\alpha}\ .
\end{equation}
Then we obtain

\begin{proposition}\label{proposition5.2}
Assume that $\sup_{r\in [0,\infty)} \lambda(0,r)\le -1$. Then $\lambda(t,r)\le -1$ for all $(t,r)
\in [0,T_{\max})\times [0,\infty)$.
\end{proposition}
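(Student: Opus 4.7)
The plan is to mirror the $\Lambda_{\alpha}$-approximation strategy used in Proposition \ref{proposition5.1}, but now with $\delta=0$ and aimed at an upper bound. First I would fix $T<T_{\max}$ and $\alpha\in(0,1)$, and record three pieces of easy data about $\Lambda_{\alpha}$ as defined by (\ref{eq5.9}) with $\delta=0$ and (\ref{eq5.11}): at $r=0$, $\Lambda_{\alpha}(t,0)=0$; at $r\to\infty$, $\Lambda_{\alpha}(t,r)\to 0$ because the flow remains asymptotically hyperbolic by Theorem \ref{theorem1.1} and so $\lambda(t,r)\to -1$; at $t=0$, the hypothesis $\lambda(0,\cdot)\le -1$ gives $\Lambda(0,\cdot)\le 0$, hence $\Lambda_{\alpha}(0,\cdot)\le 0$. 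I would also invoke Corollary \ref{corollary4.4}, which applies since $\sup\lambda(0,\cdot)\le -1\le 0$, to conclude that $\lambda(t,r)\le 0$ on $[0,T_{\max})\times[0,\infty)$; this uniform sign information on $\lambda$ is what will make the coefficient $c$ in (\ref{eq5.24}) easy to control.

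Next I would apply Proposition \ref{proposition4.1} to $\Lambda_{\alpha}$, which satisfies (\ref{eq5.19}) as an equation (so in particular $L(\Lambda_{\alpha})=0\ge 0$) on $\Omega_{T}$, with boundary/initial values as above (so $\max\{u_0,u_\infty\}=0\ge 0$). The first two alternatives of the proposition both give $\Lambda_{\alpha}\le 0$ immediately, so the only case I need to rule out is that $\Lambda_{\alpha}$ attains a positive maximum at some interior point $(t^*,r^*)\in(0,T]\times(0,\infty)$, which would force $c(t^*,r^*)\ge 0$.

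To discard this third alternative I would rewrite (\ref{eq5.24}) by grouping the $\lambda$-terms,
\begin{equation*}
c = \lambda\left[\,2(n-1)+\frac{\alpha^{3}}{r^{\alpha}+\alpha}\Bigl(\tfrac{\alpha}{2(r^{\alpha}+\alpha)}-1\Bigr)\right] - \frac{\alpha^{2}}{r^{\alpha}+\alpha}\left[\,n-1-\frac{\alpha^{2}}{2(r^{\alpha}+\alpha)}+\frac{n-\alpha}{r^{2}}\right],
\end{equation*}
and then noting two elementary bounds valid for $\alpha\in(0,1)$ and all $r>0$: $\alpha^{3}/(r^{\alpha}+\alpha)\le \alpha^{2}$ and $\alpha/(2(r^{\alpha}+\alpha))\in(0,1/2)$. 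For $n\ge 2$, the bracket multiplying $\lambda$ is therefore bounded below by $2(n-1)-\alpha^{2}>0$, and since $\lambda(t^*,r^*)\le 0$, that first summand is $\le 0$. The second bracket is bounded below by $(n-1)-1/2>0$, making the second summand strictly negative. Hence $c(t^*,r^*)<0$, contradicting the maximum principle and forcing $\Lambda_{\alpha}\le 0$ on $[0,T]\times[0,\infty)$. Letting $\alpha\searrow 0$ gives $\Lambda\le 0$ pointwise, i.e.\ $\lambda\le -1$, and since the bound is independent of $T<T_{\max}$, the conclusion follows.

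The only real obstacle is verifying that the sign of $c$ at the hypothetical positive maximum is definitively negative; this is the reason the argument needs Corollary \ref{corollary4.4} as a prerequisite, since without the a priori bound $\lambda\le 0$ the leading term $2(n-1)\lambda$ of $c$ in (\ref{eq5.24}) could go the wrong way. Everything else is a direct reuse of the scheme developed for Proposition \ref{proposition5.1}.
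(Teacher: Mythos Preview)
Your proposal is correct and follows essentially the same route as the paper's proof: both use the $\Lambda_{\alpha}$-regularization with $\delta=0$, apply Proposition \ref{proposition4.1}, and rule out an interior positive maximum by showing $c<0$ from (\ref{eq5.24}) once $\lambda\le 0$ is known. The only cosmetic difference is that the paper obtains $\lambda<0$ on $[0,T]$ via a continuity/contradiction argument (choosing $T$ as a time before $\lambda$ could first hit $0$), whereas you invoke Corollary \ref{corollary4.4} directly to get $\lambda\le 0$ globally; both yield the required sign on the $2(n-1)\lambda$ term, and your explicit bounding of the $\alpha$-dependent pieces is a slightly more detailed version of the paper's ``for $\alpha$ small enough'' remark.
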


\begin{proof} Assume to the contrary that $\sup_r \lambda(0,r)\le -1$ but $\lambda(t,r)>-1$ for some $(t,r)$ with $t>0$. Then there is some $T<T_{\max}$ such that $\lambda<0$ for all $t\in [0,T]$ but $\lambda$ is somewhere greater than $-1$. Since $\lambda+1$ and $\Lambda_{\alpha}$ have the same sign, then $\Lambda_{\alpha}>0$ somewhere. But $\Lambda_{\alpha}(0,r)\le 0$, $\Lambda_{\alpha}(t,0)=0$, and $\Lambda_{\alpha}(t,r)\to 0$ as $r\to\infty$, so by Proposition \ref{proposition4.1} we must conclude that $c(t^*,r^*)>0$ at some $(t^*,r^*) \in (0,T]\times (0,\infty)$.

Nonetheless, $\lambda<0$ on $[0,T]\times [0,\infty)$ and so, for $\alpha>0$ chosen to be small enough, then from (\ref{eq5.24}) we see that $c(t,r)$ is strictly negative in $(0,T]\times (0,\infty)$, a contradiction. Thus we conclude that $\lambda(t,r)\le -1$ for all $(t,r)\in [0,T]\times (0,\infty)$.
\end{proof}

We may in fact weaken our assumptions to allow $\lambda(0,r)<0$. We have seen that the bound from below is exponential and decays to $-1$ as $e^{-2(n-1)t}$. The decay from above is also exponential, but potentially slower.

\begin{proposition}\label{proposition5.3}
Assume that $\lambda(0,r)< 0$ for all $r\in [0,\infty)$. Then there is a constant $\beta>0$ depending only on the initial metric $g(0,\cdot) = g_0$ and $n$ such that $\lambda(t,r)< -1+ e^{-\beta t}$ for all $(t,r) \in [0,T_{\max})\times [0,\infty)$.
\end{proposition}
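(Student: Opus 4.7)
The plan is to apply the parabolic maximum principle (Proposition \ref{proposition4.1}) twice, in the style of Propositions \ref{proposition5.1} and \ref{proposition5.2}: first establish a uniform-in-time gap $\lambda \le -\epsilon < 0$ depending only on $g_0$, then use this gap to drive exponential decay. The preliminary observation is that the pointwise assumption $\lambda(0, r) < 0$, combined with the continuity of $\lambda(0, \cdot)$ at $r = 0$ (from smoothness of $g_0$) and the limit $\lambda(0, r) \to -1$ as $r \to \infty$ (asymptotic hyperbolicity), yields by compactness a constant $\epsilon > 0$ depending only on $g_0$ with $\sup_r \lambda(0, r) \le -\epsilon$, and hence $\sup_r(\lambda(0, r) + 1) \le 1 - \epsilon$.

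Stage one applies Proposition \ref{proposition4.1} to $W_\alpha := \frac{r^\alpha}{r^\alpha + \alpha}(\lambda + 1)$, that is, the function \eqref{eq5.11} with $\delta = 0$. The boundary conditions $W_\alpha(t, 0) = 0$ and $\lim_{r \to \infty} W_\alpha(t, r) = 0$ are immediate, the initial supremum is at most $1 - \epsilon$, and the evolution is \eqref{eq5.19} with $\delta = 0$, whose zeroth-order coefficient $c$ is \eqref{eq5.20}. At any interior maximizer $(t^*, r^*)$ forced by alternative (3) of Proposition \ref{proposition4.1}, Corollary \ref{corollary4.4} gives $\lambda \le 0$ (applicable because $\lambda(0, \cdot) \le 0$), so the leading term $2(n-1)\lambda$ in $c$ is nonpositive; the $\alpha^2$-bracket term is strictly negative, and its magnitude dominates the nonnegative $\alpha^3 \lambda$-correction for $\alpha$ small (the ratio is $O(\alpha)$ for $n \ge 2$). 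Hence $c < 0$, contradicting alternative (3). So $\sup W_\alpha \le 1 - \epsilon$, and sending $\alpha \searrow 0$ (using $r^\alpha/(r^\alpha + \alpha) \to 1$ for $r > 0$ together with continuity at $r = 0$) gives $\lambda(t, r) \le -\epsilon$ throughout $[0, T_{\max}) \times [0, \infty)$.

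Stage two sets $\delta^2 = \epsilon/2$ and $\beta := 2(n-1)\delta^2 = (n-1)\epsilon$, and runs the analogous argument on $\Lambda_\alpha$ from \eqref{eq5.11}. The boundary and initial bounds are unchanged, \eqref{eq5.20} applies with this $\delta$, and the Stage one gap now forces the leading term of $c$ to be uniformly negative at any interior maximizer:
\[
2(n-1)(\delta^2 + \lambda) \le 2(n-1)\bigl(\tfrac{\epsilon}{2} - \epsilon\bigr) = -(n-1)\epsilon .
\]
The $\alpha^2$-bracket term in \eqref{eq5.20} remains nonpositive, and the $\alpha^3 \lambda$-term is bounded in absolute value by $\alpha^3/(r^\alpha + \alpha) \le \alpha^2$ using $|\lambda| \le 1$. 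Choosing $\alpha^2 \le (n-1)\epsilon/2$ therefore yields $c \le -(n-1)\epsilon/2 < 0$, once more contradicting alternative (3). Hence $\sup \Lambda_\alpha \le 1 - \epsilon$, and sending $\alpha \searrow 0$ gives $(\lambda + 1) e^{\beta t} \le 1 - \epsilon < 1$, i.e., $\lambda(t, r) < -1 + e^{-\beta t}$, with $\beta = (n-1)\epsilon$ depending only on $g_0$ and $n$.

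The main obstacle is the uniform-in-$r^*$ sign analysis of $c$ at an interior maximizer: one must ensure that the nonnegative $\alpha^3 \lambda$-correction cannot cancel either the $\alpha^2$-bracket (Stage one) or the leading $(\delta^2 + \lambda)$-term (Stage two) as $r^*$ ranges over $(0, \infty)$. This is resolved by the crude uniform bound $\alpha^3/(r^\alpha + \alpha) \le \alpha^2$ together with the structural observation that the potentially singular $(n - \alpha)/r^{*2}$ contribution sits inside the already-negative $\alpha^2$-bracket term and so only reinforces, rather than spoils, the good sign.
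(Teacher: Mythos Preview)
Your two–stage strategy is sound and genuinely different from the paper's argument. The paper proceeds in a single stage with a bootstrap: it fixes $\delta\in(0,1)$ with $\max_r\lambda(0,r)<-\delta^2$, defines $T_\delta$ as the first time the gap $\lambda\le -\delta^2$ closes, applies Proposition~\ref{proposition4.1} to $\Lambda_\alpha$ on $[0,T_\delta]$ (where $\delta^2+\lambda\le 0$ holds by definition, so $c<0$ for small $\alpha$), obtains $\lambda\le -1+(1-\delta^2)e^{-2(n-1)\delta^2 t}$, and concludes $T_\delta=T_{\max}$ since this bound keeps $\lambda<-\delta^2$. Your Stage~1 replaces this continuity bootstrap by an independent application of the maximum principle with $\delta=0$, using Corollary~\ref{corollary4.4} as the external input that forces the leading term $2(n-1)\lambda\le 0$; this is more elementary (no ``first time'' argument) at the cost of running the machinery twice. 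Both routes yield the same $\beta=2(n-1)\delta^2$ with $\delta^2$ any number strictly less than $-\max_r\lambda(0,r)$.

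One slip to fix: in Stage~2 you invoke $|\lambda|\le 1$ to bound the $\alpha^3\lambda$-correction, but nothing in the hypotheses forces $\lambda(0,\cdot)\ge -1$, so $|\lambda|$ need not be bounded by~$1$. What you do have is a uniform bound $|\lambda|\le C(g_0)$ from Proposition~\ref{proposition5.1} (below) together with Corollary~\ref{corollary4.4} or your Stage~1 (above). Replacing $|\lambda|\le 1$ by $|\lambda|\le C(g_0)$ and choosing $\alpha$ small enough that $C(g_0)\,\alpha^2<(n-1)\epsilon/2$ repairs the estimate with no change to the structure. The same implicit boundedness of $|\lambda|$ underlies your ``ratio is $O(\alpha)$'' claim in Stage~1; it would be worth saying so explicitly there as well.
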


\begin{proof}
Since Proposition \ref{proposition5.1} deals with the bound from below, we restrict attention to bounding $\lambda(t,r)$ from above. Thus assume that $\lambda(0,r)>-1$ for some $r\in [0,\infty)$. Since $\lambda(0,r)<0$ for all $r\in [0,\infty)$ and tends to $-1$ as $r\to\infty$, then $\max_r \lambda(0,r)$ exists and lies in the interval $(-1,0)$.

Choose any $\delta\in (0,1)$ such that $\max_r \lambda(0,r)<-\delta^2$. Let $T_{\delta}\in (0,\infty]$ be the first time such that $\delta^2+\lambda(T_{\delta},r)=0$ for some $r\in [0,\infty)$; if there is no such time then $T_{\delta}:=T_{\rm max}$ (possibly infinite). But by (\ref{eq5.20}) and for $0<\alpha<\epsilon$ with $\epsilon$ small enough, we have
\begin{equation}
\label{eq5.26}
c(t,r)<2(n-1)\left ( \delta^2+\lambda \right ) \le 0
\end{equation}
for $t\in [0,T_{\delta}]$. Thus, by Proposition \ref{proposition4.1}, for each $\alpha\in (0,\epsilon)$, the maximum of $\Lambda_{\alpha}$ on $[0,T_{\delta}]\times [0,\infty)$ occurs at $t=0$. Then for $0\le t\le T_{\delta}$ and each $\alpha\in (0,\epsilon)$ we have $\Lambda_{\alpha}(t,r)\le \sup_{r\in[0,\infty)}\Lambda_{\alpha}(0,r) = \sup_{r\in[0,\infty)} \frac{r^{\alpha}}{\alpha+r^{\alpha}} \Lambda(0,r)\le \sup_{r\in[0,\infty)}\Lambda(0,r) = \sup_{r\in[0,\infty)}\left ( 1+\lambda(0,r)\right )<1-\delta^2$. Since this last bound is independent of $\alpha$, we may take $\alpha\to 0$ (first for $r\neq 0$, then extending to $r=0$ by continuity), yielding
\begin{equation}
\label{eq5.27}
\lambda(t,r)\le -1+ \left ( 1-\delta^2 \right )e^{-2(n-1)\delta^2 t}
\end{equation}
on $[0,T_{\delta}]\times [0,\infty)$, and thus $T_{\delta}=T_{\rm max}$.
\end{proof}

\section{Evolution of $\kappa$}
\setcounter{equation}{0}

\subsection{Evolution equation for $\kappa$}

\begin{lemma}\label{lemma6.1}
The evolution equation for the sectional curvature $\kappa$ is
\begin{equation}
\label{eq6.1} \frac{\partial \kappa}{\partial t} =\Delta \kappa +\left [ n-1+\kappa
+(n-2)\lambda \right ] \left ( r\frac{\partial \kappa}{\partial r} +2\kappa\right )
+\frac{2(n-2)}{r^2f^2}\left ( \lambda -\kappa \right )\ .
\end{equation}
\end{lemma}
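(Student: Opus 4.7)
My plan is to prove equation (\ref{eq6.1}) by direct calculation, differentiating the defining expression $\kappa = \frac{1}{rf^3}\frac{\partial f}{\partial r}$ in $t$ and then substituting the parabolic equation (\ref{eq3.9}) for $\partial_t f$. Concretely, I would begin from
\begin{equation*}
\frac{\partial \kappa}{\partial t} \;=\; \frac{1}{rf^3}\,\frac{\partial}{\partial r}\!\left(\frac{\partial f}{\partial t}\right) \;-\; \frac{3\kappa}{f}\,\frac{\partial f}{\partial t},
\end{equation*}
then insert (\ref{eq3.9}) into the second term, and $r$-differentiate (\ref{eq3.9}) before dividing by $rf^3$ for the first term. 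This produces an expression for $\partial_t \kappa$ involving $f$ and its $r$-derivatives up to third order, together with the coefficients $1/r$, $1/r^2$, $1/f^2$ inherited from $Y$ and the potential term in (\ref{eq3.9}).

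Next I would compute $\Delta \kappa$ using the radial Laplacian formula (\ref{eq3.11}) applied to $\kappa(t,r)$; via $\kappa = f_r/(rf^3)$, this also expands into third-order $r$-derivatives of $f$. Subtracting, the plan is to verify that $\partial_t \kappa - \Delta \kappa$ equals precisely the zeroth- and first-order combination on the right side of (\ref{eq6.1}). To collapse the many resulting terms, I would systematically replace derivative expressions in $f$ by $\kappa$, $\lambda$, and $\partial_r\kappa$ using the defining relations
\begin{equation*}
\kappa = \frac{1}{rf^3}\frac{\partial f}{\partial r},\qquad \lambda = \frac{1}{r^2}\Bigl(1-\frac{1}{f^2}\Bigr),\qquad \frac{1}{r^2 f^2} = \frac{1}{r^2} - \lambda,
\end{equation*}
and the Bianchi identity (\ref{eq3.14}), $r\partial_r\lambda = 2(\kappa-\lambda)$. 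The latter is particularly useful for eliminating any stray $\partial_r\lambda$ pieces that arise when the $r$-derivative of (\ref{eq3.9}) hits the $-\frac{(n-2)}{r^2 f}(f^2-1) = -(n-2)\lambda f$ potential term.

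The main obstacle is not conceptual but algebraic: the calculation involves many competing terms of orders $1/r^2$, $1/r$, and $r$, and one must carefully track cancellations to obtain the clean grouping $[n-1+\kappa+(n-2)\lambda](r\partial_r\kappa + 2\kappa) + \frac{2(n-2)}{r^2f^2}(\lambda - \kappa)$. A useful organizing device is to note that $r\partial_r\kappa + 2\kappa = \frac{1}{r}\partial_r(r^2\kappa)$, which matches the structure produced when $\partial_r$ acts on a product like $r^2 f^{-2}$-type factors arising from $Y(f)$. As a sanity check, I would verify that the non-derivative terms vanish when $\kappa=\lambda=-1$ (the hyperbolic fixed point), and confirm the correct linearization at this fixed point; both provide strong consistency tests before the full identity is declared.
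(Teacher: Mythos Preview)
Your approach is correct and will work, but the paper organizes the computation differently in a way that may save you some bookkeeping. Rather than differentiating $\kappa = \frac{1}{rf^3}\frac{\partial f}{\partial r}$ directly in $t$, the paper exploits the identity
\[
\kappa \;=\; \frac{1}{2r}\,\frac{\partial}{\partial r}\!\left(-\frac{1}{f^2}\right),
\]
writes the evolution equation for $-1/f^2$ (obtained immediately from the $w=f^2$ equation (\ref{eq4.3})), and then applies the first-order operator $\frac{1}{2r}\partial_r$ to both sides. The only nontrivial step is then to commute $\frac{1}{2r}\partial_r$ past the Laplacian, which is isolated in a single clean identity (their equation (\ref{eq6.5})):
\[
\frac{1}{2r}\,\partial_r\!\left(\Delta\!\left(-\tfrac{1}{f^2}\right)\right)
= \Delta\kappa + \frac{2}{rf^2}\,\partial_r\kappa - 3r\kappa\,\partial_r\kappa - 2(n+1)\kappa^2.
\]
This separates the third-order derivative bookkeeping from the lower-order simplification and tends to produce fewer competing terms than tracking $\partial_r^3 f$ in both $\partial_t\kappa$ and $\Delta\kappa$ as you propose. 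Your route is equivalent in content and will certainly succeed; the paper's route just packages the commutator more transparently. Your consistency checks (vanishing at the hyperbolic fixed point, linearization) are good ideas in either organization.
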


\begin{proof}
Starting from (\ref{eq4.3}), we obtain
\begin{equation}
\begin{split}
\label{eq6.2} \frac{\partial}{\partial t} \left ( -\frac{1}{f^2} \right )
=&\, \Delta \left ( -\frac{1}{f^2} \right )+\left ( \frac{\partial}{\partial r}
\left ( -\frac{1}{f^2} \right )\right )^2+\left [ \frac{(n-2)}{r}-\frac{n}{rf^2}+(n-1)r\right ] \frac{\partial}{\partial r}
\left ( -\frac{1}{f^2} \right )\\
&\, +\frac{2(n-2)}{r^2}\left [ \left ( -\frac{1}{f^2} \right )
+\left ( -\frac{1}{f^2} \right )^2\right ]\ .
\end{split}
\end{equation}
Differentiating both sides of (\ref{eq6.2}) with respect to $r$, multiplying by $\frac{1}{2r}$, and using that $\frac{1}{2r}\frac{\partial}{\partial r}\left ( -\frac{1}{f^2} \right )=\frac{1}{rf^3}\frac{\partial f}{\partial r}=\kappa$, then we get that
\begin{equation}
\begin{split}
\label{eq6.3}\frac{\partial \kappa}{\partial t}=&\, \frac{1}{2r}\frac{\partial}{\partial r}
\left ( \Delta\left ( -\frac{1}{f^2} \right ) \right )+4r\kappa \frac{\partial \kappa}{\partial r}
+4\kappa^2+\left [ \frac{(n-2)}{r} - \frac{n}{rf^2} +(n-1)r\right ] \frac{\partial \kappa}{\partial r}\\
&\,
+2\left [ n-1+n\kappa \right ] \kappa +\frac{2(n-2)}{r^4f^2}\left ( 1-\frac{1}{f^2} \right ) +\frac{2(n-2)}{r^2}
\left ( 1-\frac{2}{f^2}\right )\kappa \ .
\end{split}
\end{equation}
On the other hand, we have that
\begin{equation}
\label{eq6.4}
\frac{1}{2r}\frac{\partial}{\partial r}
\left ( \Delta\left ( -\frac{1}{f^2} \right ) \right )
=\frac{1}{2r} \frac{\partial}{\partial r} \left [ \frac{1}{r^{n-1}f}\frac{\partial}{\partial r}
\left ( \frac{r^{n-1}}{f}\frac{\partial}{\partial r} \left ( -\frac{1}{f^2} \right ) \right )\ ,
\right ]
\end{equation}
whose right-hand side can be re-expressed to yield
\begin{equation}
\label{eq6.5}
\frac{1}{2r}\frac{\partial}{\partial r}
\left ( \Delta\left ( -\frac{1}{f^2} \right ) \right )=\Delta \kappa +\frac{2}{rf^2} \frac{\partial \kappa}{\partial r}
-3r\kappa\frac{\partial \kappa}{\partial r}-2(n+1)\kappa^2\ .
\end{equation}
Substituting this into (\ref{eq6.3}) yields
\begin{equation}
\begin{split}
\label{eq6.6} \frac{\partial\kappa}{\partial r}
=&\, \Delta \kappa +r\kappa \frac{\partial \kappa}{\partial r} +\left [ \frac{(n-2)}{r}
\left ( 1-\frac{1}{f^2} \right ) +(n-1)r\right ] \frac{\partial \kappa}{\partial r}\\
&\, +2\left [ \kappa +n-1+\frac{(n-2)}{r^2}\left ( 1-\frac{2}{f^2} \right ) \right ] \kappa
+\frac{2(n-2)}{r^4f^2}\left ( 1-\frac{1}{f^2}\right ) \ .
\end{split}
\end{equation}
Simplifying and using (\ref{eq3.13}) (the definition of $\lambda$), we obtain (\ref{eq6.1}).
\end{proof}

\subsection{The difference of sectional curvatures}

\begin{lemma}\label{lemma6.2} The difference of sectional curvatures obeys the evolution equation
\begin{equation}
\label{eq6.7}
\begin{split}
\frac{\partial}{\partial t}(\kappa-\lambda)=&\, \Delta (\kappa-\lambda) +\left [ n-1+\kappa+(n-2)\lambda\right ]r\frac{\partial}{\partial r}(\kappa-\lambda)\\
&\, +2\left [ 2(n-1)\lambda -\frac{n}{r^2} +n-1\right ] (\kappa-\lambda)\ .
\end{split}
\end{equation}
\end{lemma}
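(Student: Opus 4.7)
The plan is a direct algebraic computation: subtract the $\lambda$-evolution (\ref{eq5.7}) from the $\kappa$-evolution (\ref{eq6.1}), and then use the Bianchi identity (\ref{eq3.14}), $r\,\partial_r \lambda = 2(\kappa-\lambda)$, to eliminate every remaining derivative of $\lambda$ in favor of the algebraic quantity $\kappa-\lambda$. The Laplacians combine immediately to $\Delta(\kappa-\lambda)$.

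First I would rewrite the transport term in (\ref{eq6.1}) as
\[
[n-1+\kappa+(n-2)\lambda]\,r\partial_r\kappa \;=\; [n-1+\kappa+(n-2)\lambda]\,r\partial_r(\kappa-\lambda) \,+\, 2[n-1+\kappa+(n-2)\lambda]\,(\kappa-\lambda),
\]
producing the desired first-order operator of (\ref{eq6.7}). Next I would process the gradient and quadratic contributions from (\ref{eq5.7}) using (\ref{eq5.8}): Bianchi turns $Z\cdot d\lambda = \bigl[\tfrac{2}{r}+(n-1)r(\lambda+1)\bigr]\partial_r\lambda$ into $\tfrac{4(\kappa-\lambda)}{r^2}+2(n-1)(\lambda+1)(\kappa-\lambda)$, and turns $r^2 f^2 |d\lambda|^2 = r^2(\partial_r\lambda)^2$ into $4(\kappa-\lambda)^2$. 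Finally I would replace $\tfrac{1}{r^2 f^2}$ in the last term of (\ref{eq6.1}) by $\tfrac{1}{r^2}-\lambda$, using (\ref{eq3.13}).

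Writing $\kappa=\lambda+(\kappa-\lambda)$ in the nonlinear terms $2\kappa[n-1+\kappa+(n-2)\lambda]$ and $2(n-1)\lambda(\lambda+1)$ and collecting, several cancellations must occur, which serve as internal consistency checks. The pure $\lambda$ contributions $+2(n-1)\lambda$ and $-2(n-1)\lambda$ cancel; the $\lambda^2$ contributions $+2(n-1)\lambda^2$ (from $2\kappa^2+2(n-2)\kappa\lambda$) and $-2(n-1)\lambda^2$ (from $-2(n-1)\lambda(\lambda+1)$) cancel; and the three $(\kappa-\lambda)^2$ contributions $+2D^2$ (from $2\kappa^2$), $+2D^2$ (from the rewritten transport term), and $-4D^2$ (from $-r^2f^2|d\lambda|^2$) also cancel. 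Gathering what remains yields the coefficient $2(n-1)+4(n-1)\lambda-\tfrac{2n}{r^2}$ in front of $(\kappa-\lambda)$, which is exactly the factor $2[2(n-1)\lambda-\tfrac{n}{r^2}+n-1]$ appearing in (\ref{eq6.7}).

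The only real obstacle is bookkeeping rather than mathematics: roughly a dozen terms must be tracked through the substitutions, and the cancellations listed above — particularly that the apparently quadratic $(\kappa-\lambda)^2$ terms must vanish in order for (\ref{eq6.7}) to be linear in $\kappa-\lambda$ — both tell one that the derivation is on the right track and provide a concrete target for the algebra.
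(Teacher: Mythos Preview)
Your proposal is correct and follows exactly the approach of the paper: subtract (\ref{eq5.7}) from (\ref{eq6.1}) and use the Bianchi identity (\ref{eq3.14}) to convert the residual $r\,\partial_r\lambda$ factors into $2(\kappa-\lambda)$, then collect terms. You have simply spelled out the bookkeeping that the paper compresses into ``after some manipulation,'' and your listed cancellations (the pure $\lambda$, $\lambda^2$, and $(\kappa-\lambda)^2$ terms) are all accurate.
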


\begin{proof}
Subtract (\ref{eq5.7}) from (\ref{eq6.1}) and write the resulting equation in terms of the difference $\kappa - \lambda$. The Bianchi identity (\ref{eq3.14}) is then used to convert some of the $r\frac{\partial\lambda}{\partial r}$ factors into $2(\kappa-\lambda)$. After some manipulation, (\ref{eq6.7}) follows.
\end{proof}

Now define
\begin{equation}
\label{eq6.8}
\mu:=e^{4a(n-1)t}(\kappa-\lambda)^2
\end{equation}
for $a\in {\mathbb R}$. Then Lemma \ref{lemma6.2} yields
\begin{equation}
\label{eq6.9}
\begin{split}
\frac{\partial\mu}{\partial t}=&\, \Delta \mu -e^{4a(n-1)t}\left \vert \nabla (\kappa-\lambda) \right \vert^2+ \left [ n-1+\kappa +(n-2)\lambda \right ] r\frac{\partial\mu}{\partial r}\\
 &\, +4\left [ 2(n-1)\lambda -\frac{n}{r^2}+(n-1)(1+a) \right ]\mu\\
\le &\, \Delta\mu +\left [ n-1+\kappa +(n-2)\lambda \right ] r\frac{\partial\mu}{\partial r}+4(n-1)(1+a+2\lambda)\mu\ ,
\end{split}
\end{equation}
where the inequality follows since by definition we have $\mu\ge 0$. Writing out the Laplacian in coordinates, we obtain

\begin{equation}
\label{eq6.10} \frac{\partial\mu}{\partial t}\leq \frac{1}{f^2}\frac{\partial^2 \mu}{\partial r^2} +\left ( (n-1)r + \frac{(n-1)}{r}-r\lambda\right ) \frac{\partial\mu}{\partial r}+4(n-1)(1+a+2\lambda)\mu\ .
\end{equation}

The $1/r$ multiplying the first derivative produces a singularity, but only of the mild sort expected from our choice of polar coordinate system. Nonetheless, we will have to handle it, and since we do not have \emph{a priori} uniform control over $\mu$ at $r=0$, we follow the technique used in the last section and define
\begin{equation}
\label{eq6.11} \mu_\alpha:=\frac{r^{\alpha}}{(r^{\alpha}+\alpha)}\mu\ ,\ \alpha\in (0,1)\ ,
\end{equation}
so that $\mu_\alpha(t,0)= 0$.
Using the straightforward expressions
\begin{equation}
\label{eq6.12} \frac{\partial \mu}{\partial r} = \left ( 1+\frac{\alpha}{r^{\alpha}} \right ) \frac{\partial \mu_\alpha}{\partial r}-\frac{\alpha^2}{r^{\alpha+1}}\mu_\alpha
\end{equation}
and
\begin{equation}
\label{eq6.13}\frac{\partial^2 \mu}{\partial r^2} = \left ( 1+\frac{\alpha}{r^{\alpha}} \right ) \frac{\partial^2 \mu_\alpha}{\partial r^2}-\frac{2\alpha^2}{r^{\alpha+1}}\frac{\partial \mu_\alpha}{\partial r} +\frac{\alpha^2(\alpha+1)}{r^{\alpha+2}}\mu_\alpha\ ,
\end{equation}
and simplifying a term using the definition of $\lambda$ (\ref{eq3.13}), we obtain an inequality for the evolution of $\mu_\alpha$:

\begin{equation}
\label{eq6.14}
\begin{split}
\frac{\partial\mu_{\alpha}}{\partial t}\le &\, \frac{1}{f^2}\frac{\partial^2 \mu_{\alpha}}{\partial r^2} +\left [ (n-1)r + \frac{(n-1)}{r}-r\lambda-\frac{2\alpha^2}{r(r^{\alpha}+\alpha)f^2}\right ] \frac{\partial \mu_{\alpha}}{\partial r}\\
&\, +\left [ 4(n-1)(1+a+2\lambda)-\frac{\alpha^2}{(r^{\alpha}+\alpha)}\left ( \frac{(n-2-\alpha)}{r^2}+\alpha\lambda + n-1\right ) \right ]\mu_{\alpha}\ .
\end{split}
\end{equation}
Using $n\ge 3$, taking $\alpha\in (0,1)$, and using that $\mu_{\alpha}\ge 0$ by its definition, this simplifies to
\begin{equation}
\label{eq6.15}
\frac{\partial\mu_{\alpha}}{\partial t}\le \frac{1}{f^2}\frac{\partial^2 \mu_{\alpha}}{\partial r^2} +\left [ (n-1)r + \frac{(n-1)}{r}-r\lambda-\frac{2\alpha^2}{r(r^{\alpha}+\alpha)f^2}\right ] \frac{\partial \mu_{\alpha}}{\partial r}
+{\hat c}(t,r) \mu_{\alpha} \ ,
\end{equation}
where
\begin{equation}
\label{eq6.16}
{\hat c}(t,r) := 4(n-1)(1+a+2\lambda)-\frac{\alpha^3\lambda }{(r^{\alpha}+\alpha)}\ .
\end{equation}

\subsection{Evolution and convergence of $\mu$}

\noindent We can now prove that $\mu(t)\to 0$ as $t\to\infty$, provided that $\lambda(0)<0$. As before, when $\lambda(0)\le -1$ we will find rapid convergence, whereas in the more general case the convergence may be slower. Even more generally, if we allow that $\lambda(0)\le 0$, we will see that $\mu$ remains bounded at any finite time, but we will be unable to show convergence unless $\lambda(0)<0$.

\begin{proposition}\label{proposition6.3}
Given $\lambda(t,r)\le -1$ for all $(t,r)\in [0,T_{\max})\times [0,\infty)$, then
\begin{equation}
\label{eq6.17} \left \vert \kappa(t,r)-\lambda(t,r)\right \vert \le e^{-2b(n-1)t} \max_{r\in[0,\infty)} \left \vert \kappa(0,r)-\lambda(0,r)\right \vert
\end{equation}
for any $b<1$.
\end{proposition}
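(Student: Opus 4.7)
The plan is to apply Proposition \ref{proposition4.1} to the shifted quantity $\mu_\alpha$ defined in \eqref{eq6.11} on the rectangle $[0,T]\times[0,\infty)$ for each $T<T_{\max}$, with $a=b\in(0,1)$ in the definition \eqref{eq6.8} of $\mu$, and then to let $\alpha\searrow 0$ and $T\nearrow T_{\max}$. The evolution inequality \eqref{eq6.15} has precisely the form $L(\mu_\alpha)\ge 0$ required by the proposition, with coefficient $\hat c(t,r)$ given by \eqref{eq6.16}. The boundary data cooperate: $\mu_\alpha(t,0)=0$ by construction, and $\mu_\alpha(t,r)\to 0$ as $r\to\infty$ because $\kappa,\lambda\to -1$ at conformal infinity along the AH flow (Theorem \ref{theorem1.1}); central regularity of $\kappa$ and $\lambda$ at the origin supplies the continuity on $\bar\Omega_T$.

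The crux is to show $\hat c(t,r)<0$ throughout $(0,T]\times(0,\infty)$ for $\alpha$ small enough, which rules out the third alternative of Proposition \ref{proposition4.1}. Under the hypothesis $\lambda\le -1$ we have
\[ 1+a+2\lambda \le a-1 < 0 \]
for any $a<1$, so the first term in \eqref{eq6.16} is bounded above by $4(n-1)(a-1)<0$. The remaining term $-\alpha^3\lambda/(r^\alpha+\alpha)$ is positive, but it is controlled by a uniform lower bound on $\lambda$: Proposition \ref{proposition5.1}, together with smoothness and AH decay of $\lambda(0,\cdot)$, gives $\lambda(t,r)\ge -M$, with $M:=-\inf_r\lambda(0,r)$ finite. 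Since $r^\alpha+\alpha\ge\alpha$ for $r\ge 0$,
\[ -\frac{\alpha^3\lambda}{r^\alpha+\alpha} \le \alpha^2 M, \]
so $\hat c \le 4(n-1)(a-1)+\alpha^2 M$, which is strictly negative as soon as $\alpha^2<4(n-1)(1-a)/M$.

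With the interior-maximum alternative excluded, Proposition \ref{proposition4.1} yields $\sup_{[0,T]\times[0,\infty)}\mu_\alpha \le \max_{r}\mu_\alpha(0,r)\le M_0^2$, where $M_0:=\max_r|\kappa(0,r)-\lambda(0,r)|$ is finite by smoothness and asymptotic hyperbolicity of $g_0$. Letting $\alpha\searrow 0$ pointwise (which is legitimate because the upper bound $M_0^2$ is independent of $\alpha$) and then $T\nearrow T_{\max}$ gives $e^{4a(n-1)t}(\kappa-\lambda)^2\le M_0^2$, i.e.\ $|\kappa-\lambda|\le M_0\, e^{-2a(n-1)t}$ on $[0,T_{\max})\times[0,\infty)$. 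Identifying $b=a\in(0,1)$ delivers the claimed estimate. The chief technical difficulty is the singular drift at $r=0$ combined with the absence of any \emph{a priori} uniform control on $\mu$ at the origin; this is exactly what the $\mu_\alpha$ device is designed to handle, at the cost of an $O(\alpha^2)$ perturbation of $\hat c$ which is harmlessly absorbed by the strictly negative leading term $4(n-1)(a-1)$ once $a<1$ is fixed.
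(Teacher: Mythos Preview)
Your proof is correct and follows the same strategy as the paper: apply Proposition~\ref{proposition4.1} to $\mu_\alpha$, show $\hat c<0$ for small $\alpha$, then let $\alpha\searrow 0$ and $T\nearrow T_{\max}$. The only cosmetic difference is in the $\hat c<0$ step: you invoke Proposition~\ref{proposition5.1} to obtain a uniform lower bound $\lambda\ge -M$ and then estimate the perturbation term by $\alpha^2 M$, whereas the paper instead absorbs $-\alpha^3\lambda/(r^\alpha+\alpha)\le -\alpha^2\lambda$ directly into the $2\lambda$ term of $\hat c$, so that no external lower bound on $\lambda$ is needed.
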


\begin{proof} Choose any $0<T<T_{\max}$. As we have that $\mu_{\alpha}(t,0)=0$ and $\mu_{
\alpha}(t,r)\to 0$ as $r\to \infty$, Proposition \ref{proposition4.1} implies that either $\mu_{\alpha}$ is identically zero (and then $\mu$ is identically zero, and the Bianchi identity implies the manifold is hyperbolic space), or the maximum of $\mu_{\alpha}(t,r)$ on $[0,T]\times [0,\infty)$ is positive and occurs at $t=0$, or the maximum of $\mu_{\alpha}(t,r)$ is positive and occurs at a point $(t^*,r^*)\in (0,T]\times (0,\infty)$ where ${\hat c}(t^*,r^*)\ge 0$. Since $\lambda(t,r)\le -1$ we find $0 \leq \frac{-\alpha^3 \lambda}{r^{\alpha} + \alpha} \leq -\alpha^2 \lambda$, and then (\ref{eq6.16}) yields
\begin{equation}
\label{eq6.18}
\begin{split}
{\hat c}(t,r) \le &\, 4(n-1)\left[1+a+\left ( 2-\frac{\alpha^2}{4(n-1)}\right ) \lambda \right]\\
\le &\, 4(n-1)\left [a+\left (1-\frac{\alpha^2}{4(n-1)}\right )\lambda \right ]\ .
\end{split}
\end{equation}
Pick $\alpha$ sufficiently small, say $0<\alpha<1$. Since $\lambda\le -1$, the right-hand side of (\ref{eq6.18}) is strictly negative if $a<1-\frac{\alpha^2}{4(n-1)}$. As we can choose $\alpha>0$ as close to zero as we wish, any $a<1$ suffices to make ${\hat c}<0$. Therefore, for sufficiently small $\alpha$, the minimum of $\mu_{\alpha}$ occurs at $t=0$. Now take $\alpha\searrow 0$ and then take $T\nearrow T_{\max}$. Finally, to express the decay in the form of (\ref{eq6.17}), take a square root.
\end{proof}

We now weaken the condition $\lambda(0)\le -1$. We begin with the following time-dependent bound which implies a long-time existence result.

\begin{proposition}\label{proposition6.4}
Given $\lambda(0,\cdot)\le 0$ for all $(t,r)\in [0,\infty)\times [0,\infty)$, then $\kappa(t,\cdot)+1$ is bounded on $[0,T]$ for any $T>0$.
\end{proposition}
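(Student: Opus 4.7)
The plan is to reuse the machinery assembled for Proposition \ref{proposition6.3}, but with a less favourable choice of the constant $a$ appearing in the definition (\ref{eq6.8}) of $\mu$. First I would collect the two preliminary consequences of $\lambda(0,\cdot)\le 0$: by Corollary \ref{corollary4.4} we have $\lambda(t,\cdot)\le 0$ throughout $[0,T_{\max})\times[0,\infty)$, and Proposition \ref{proposition5.1} gives the lower bound $\lambda(t,r)\ge -1+e^{-2(n-1)t}\inf_r(\lambda(0,r)+1)$. Together these yield $|\lambda|\le M$ on $[0,T]\times[0,\infty)$ for some finite $M=M(g_0,T,n)$.

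Next I would revisit the coefficient $\hat c(t,r)$ in (\ref{eq6.16}). The proof of Proposition \ref{proposition6.3} used $\lambda\le -1$ to render $\hat c<0$ for any $a<1$; here, lacking that hypothesis, I would instead choose $a$ sufficiently negative, for concreteness $a=-2$, so that
\begin{equation*}
4(n-1)(1+a+2\lambda)=4(n-1)(-1+2\lambda)\le -4(n-1)
\end{equation*}
since $\lambda\le 0$. The correction term $-\alpha^3\lambda/(r^\alpha+\alpha)$ in (\ref{eq6.16}) is non-negative, but using $r^\alpha+\alpha\ge\alpha$ and $|\lambda|\le M$ it is bounded by $\alpha^2 M$ uniformly in $r\ge 0$. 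Taking $\alpha\in(0,1)$ small enough that $\alpha^2 M<4(n-1)$ then guarantees $\hat c(t,r)<0$ on all of $(0,T]\times(0,\infty)$.

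Once $\hat c<0$ is secured, the parabolic inequality (\ref{eq6.15}), together with $\mu_\alpha(t,0)=0$ and $\mu_\alpha(t,r)\to 0$ as $r\to\infty$, puts us squarely in the setting of Proposition \ref{proposition4.1}, whose third alternative is precluded. The supremum of $\mu_\alpha$ on $[0,T]\times[0,\infty)$ is therefore attained at $t=0$, giving
\begin{equation*}
\mu_\alpha(t,r)\le \sup_r\mu_\alpha(0,r)\le \sup_r(\kappa(0,r)-\lambda(0,r))^2=:M_0<\infty,
\end{equation*}
finite because $g_0$ is smooth and asymptotically hyperbolic. Letting $\alpha\searrow 0$ and unpacking (\ref{eq6.8}) with $a=-2$ yields $(\kappa(t,r)-\lambda(t,r))^2\le e^{8(n-1)t}M_0$, which is bounded on $[0,T]$. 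Writing $\kappa+1=(\kappa-\lambda)+(\lambda+1)$ and combining with the uniform bound on $\lambda+1$ from Proposition \ref{proposition5.1} then delivers the claim.

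The one delicate point, just as in Proposition \ref{proposition6.3}, is verifying that $\hat c<0$ holds uniformly down to the coordinate singularity at $r=0$; I expect this to be the main obstacle, but it is handled by the compactness of the range of $\lambda$ values (which was unavailable before Proposition \ref{proposition5.1}) together with the prefactor $\alpha^3$ in the problematic term, which dominates the $1/(r^\alpha+\alpha)$ blow-up once $\alpha$ is taken small enough.
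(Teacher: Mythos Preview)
Your argument is correct and follows essentially the same route as the paper. The only cosmetic difference is in how the constants are allocated: the paper fixes any $\alpha\in(0,1)$ and then chooses $a=-K-2$ (with $K$ a uniform bound on $|\lambda|$) to force $\hat c<0$, whereas you fix $a=-2$ and instead take $\alpha$ small depending on the bound $M$ on $|\lambda|$; either way the maximum principle (Proposition~\ref{proposition4.1}) and the limit $\alpha\searrow 0$ finish the job identically.
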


\begin{proof} First, from Propositions \ref{proposition5.1} and \ref{proposition5.2}, $\lambda(t,r)$ is bounded above by $0$ and bounded below by a function that decays to $-1$ exponentially. Thus we can find some $K>0$ which depends only on the initial sectional curvature $\lambda(0,\cdot)$ such that $0 \geq \lambda(t,r)\ge -4(n-1)K$. Then estimating (\ref{eq6.16}) with $\alpha\in (0,1)$ we have
\begin{equation}
\label{eq6.19}
\begin{split}
{\hat c}(t,r) \le &\, 4(n-1)\left[1+a+\left ( 2-\frac{\alpha^2}{4(n-1)}\right ) \lambda \right]\\
\le &\, 4(n-1)(1+a)+4(n-1)K\ ,
\end{split}
\end{equation}
Choose $a=-K-2$. Then ${\hat c}(t,r)\le -4(n-1)<0$, and thus each $\mu_{\alpha}$ governed by (\ref{eq6.15}) is bounded above by an $\alpha$-independent bound which depends only on $\mu(0,\cdot)$. As before, we can therefore take $\alpha\searrow 0$, thereby obtaining a bound for $\mu(t,r)$. Call the bound $C^2$. Then from (\ref{eq6.8}) we have
\begin{equation}
\label{eq6.20}
\left \vert \kappa -\lambda \right \vert :=e^{-2a(n-1)t}\sqrt{\mu}\le Ce^{2(n-1)(K+2)t}\ .
\end{equation}
Since $\lambda+1$ is bounded on $[0,T]$ for any $T>0$ and since $\left \vert \kappa -\lambda \right \vert$ is also bounded on $[0,T]$ for any $T>0$, then so is $\kappa+1$. \end{proof}

Now we are able to show that if $\lambda(0,\cdot)<0$, then $\kappa(t)$ decays to $-1$. The idea is that Proposition \ref{proposition6.4} would have yielded the requisite decay of $\kappa$ if we could have chosen a positive value for $a$ in (\ref{eq6.19}). Since (\ref{eq6.19}) is obtained from (\ref{eq6.16}) by estimating $\lambda$, we return to (\ref{eq6.16}) and observe that when $\lambda$ is sufficiently close to $-1$ and $\alpha$ is sufficiently small, there are positive choices for $a$ which will make $c<0$. If $\lambda$ is not initially close enough to $-1$, we can invoke Propositions \ref{proposition6.4} and \ref{proposition5.3}, which tell us that eventually $\lambda(t,\cdot)$ will be close enough to $-1$.

\begin{proposition}\label{proposition6.5}
Assume that $\lambda(0,r)< 0$ for all $r\in [0,\infty)$. Then there are constants $\gamma>0$ and $C>0$ depending only on $g(0,\cdot)$ and $n$ such that $\vert \kappa(t,r)+1\vert < C e^{-\gamma t}$ for all $(t,r) \in [0,\infty)\times [0,\infty)$.
\end{proposition}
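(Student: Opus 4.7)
The plan is to use Proposition \ref{proposition5.3} to wait until $\lambda$ becomes sufficiently close to $-1$, then run the maximum principle argument from Proposition \ref{proposition6.3} with a strictly positive choice of the exponent $a$. The argument naturally splits into a bounded-in-time regime $[0,t_0]$, handled by Proposition \ref{proposition6.4}, and the asymptotic regime $[t_0,\infty)$, where we obtain the exponential decay.

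First I would analyze the coefficient $\hat{c}(t,r)$ from (\ref{eq6.16}). Since $\lambda<0$ and $\frac{\alpha^3}{r^{\alpha}+\alpha}\le \alpha^2$, one has $-\frac{\alpha^3\lambda}{r^{\alpha}+\alpha}\le \alpha^2|\lambda|$, so
\begin{equation*}
\hat{c}(t,r)\le 4(n-1)\bigl(1+a+2\lambda\bigr)+\alpha^2|\lambda|.
\end{equation*}
Fix $a=\tfrac12$ (any positive $a<1$ will do). If $\lambda\le -1+\epsilon$ for some small $\epsilon>0$ and $\lambda$ is uniformly bounded below (which holds by Proposition \ref{proposition5.1}), then for $\alpha\in(0,1)$ sufficiently small depending on $g_0$ and $n$, I would verify that $\hat{c}(t,r)\le -c_0<0$ uniformly. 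The precise choice is $\epsilon$ small enough that $4(n-1)(\tfrac12-1+2\epsilon)$ dominates the $\alpha^2|\lambda|$ correction.

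Next, by Proposition \ref{proposition5.3} there is a time $t_0>0$, depending only on $g_0$ and $n$, such that $\lambda(t,r)\le -1+\epsilon$ for all $(t,r)\in[t_0,\infty)\times[0,\infty)$. On $[0,t_0]$, Proposition \ref{proposition6.4} gives that $\kappa+1$, hence $\kappa-\lambda$, is bounded. Therefore $\mu(t_0,\cdot)=e^{4a(n-1)t_0}(\kappa(t_0)-\lambda(t_0))^2$ is bounded by a constant $M$ depending only on $g_0$ and $n$. Shifting time, I would apply Proposition \ref{proposition4.1} to $\mu_{\alpha}$ on $[t_0,T]\times[0,\infty)$ using the inequality (\ref{eq6.15}): the boundary values $\mu_{\alpha}(t,0)=0$ and $\mu_{\alpha}(t,r)\to 0$ as $r\to\infty$ together with $\hat{c}<0$ exclude alternatives (1) and (3), so the supremum of $\mu_{\alpha}$ on this slab is attained at $t=t_0$ and is bounded by $M$ independently of $\alpha$ and $T$. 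Sending $\alpha\searrow 0$ and $T\nearrow\infty$ gives $\mu(t,r)\le M$ for all $t\ge t_0$.

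Unpacking the definition (\ref{eq6.8}) with $a=\tfrac12$ yields $|\kappa(t,r)-\lambda(t,r)|\le \sqrt{M}\,e^{-(n-1)t}$ on $[t_0,\infty)\times[0,\infty)$, while on $[0,t_0]$ Proposition \ref{proposition6.4} already provides a uniform bound; adjusting constants gives $|\kappa-\lambda|\le C_1 e^{-\gamma_1 t}$ for all $t\ge 0$. Combining with Proposition \ref{proposition5.3} (and the lower bound from Proposition \ref{proposition5.1}), which gives $|\lambda+1|\le C_2 e^{-\gamma_2 t}$, and taking $\gamma=\min\{\gamma_1,\gamma_2\}$ finishes the proof. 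The main obstacle is the bookkeeping in step 1: one must simultaneously choose $\epsilon$, $\alpha$, and the positive constant $a$ so that $\hat{c}$ is uniformly negative on $[t_0,\infty)\times(0,\infty)$, using that $\lambda$ is trapped in a narrow window near $-1$ on that range; once this tuning is settled the rest is a direct application of the singular maximum principle developed in Section 4.
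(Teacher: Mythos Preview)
Your proposal is correct and follows essentially the same route as the paper: wait via Proposition~\ref{proposition5.3} until $\lambda$ is trapped near $-1$, use Proposition~\ref{proposition6.4} to bound $\kappa-\lambda$ on the waiting interval, and then run the maximum principle on $\mu_{\alpha}$ with a positive $a$ so that $\hat c<0$, finally combining the resulting exponential decay of $\kappa-\lambda$ with that of $\lambda+1$. The only differences are cosmetic: you fix $a=\tfrac12$ and leave $\epsilon$ abstract, whereas the paper fixes $\epsilon=\tfrac{1}{10}$ and observes that any $a<\tfrac45$ then works; your estimate $\hat c\le 4(n-1)(1+a+2\lambda)+\alpha^2|\lambda|$ is the same as the paper's $\hat c\le 4(n-1)\bigl[1+a+(2-\tfrac{\alpha^2}{4(n-1)})\lambda\bigr]$. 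One small wording point: alternative~(1) of Proposition~\ref{proposition4.1} is not ``excluded'' but rather gives $\sup\mu_{\alpha}=0$, which is the trivial case; the substantive conclusion is that alternative~(3) is ruled out by $\hat c<0$, forcing the supremum onto $t=t_0$.
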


\begin{proof}
Take $\lambda(0,r)\le -M<0$ for some $M>0$ (since $\lambda(0,r)<0$ and $\lambda(0,r)\to -1$ as $r\to\infty$). As in the preamble, by Proposition \ref{proposition6.4} the flow will exist for all $t>0$. By Proposition \ref{proposition5.3} there will be a time $T(M,\epsilon)\ge 0$ after which $|\lambda(t)+1|<\epsilon$, where we take $\epsilon$ to be small enough for our purposes, say $\epsilon=1/10$. During the interval $[0,T(M,\epsilon)]$, the magnitude of $\kappa+1$ may have increased, but is bounded by the estimate in Proposition \ref{proposition6.4}.

Then equation (\ref{eq6.16}) yields
\begin{equation}
\label{eq6.21}
\begin{split}
{\hat c}(t,r)\leq &\, 4(n-1)\left[1+a+\left ( 2-\frac{\alpha^2}{4(n-1)}\right ) \lambda \right]\\
< &\, 4(n-1) \left ( a-\frac{4}{5} + \frac{9 \alpha^2}{40(n-1)}\right )\ ,
\end{split}
\end{equation}
and once more choosing $\alpha$ near $0$ allows us to make a positive choice for $a$ yielding ${\hat c}(t,r)<0$. The rest of the argument is as before, yielding that on the domain $[T(M,1/10),\infty)$ $\mu$ decays as
\begin{equation}
\label{eq6.22}
\mu\le C e^{-2a(n-1) t}\ ,
\end{equation}
for some constant $C$ which depends on $\mu(T(M,1/10))$ and $n$, and thus only on $M$ and $n$. Finally, because both $\mu$ and $\lambda+1$ decay exponentially, so does $\kappa+1$.
\end{proof}

\section{Proofs of the main theorems}
\setcounter{equation}{0}

\noindent Since Theorem \ref{theorem1.1} was proved in Section 2, it remains to prove Theorems \ref{theorem1.2} and \ref{theorem1.4}. In what follows, it will help to recall that
\begin{equation}
\label{eq7.1}
\left \vert {\rm Rm} +K\right \vert^2 = 4(n-1) \left [ (\kappa+1)^2+\frac12 (n-2) (\lambda+1)^2 \right ]
\end{equation}
in rotational symmetry. For the purposes of Theorem \ref{theorem1.1}, we interpret the left-hand side as being evaluated at $F_t^{-1}(r,\dots)$; i.e., we evaluate the left-hand side along the normalized Ricci flow obtained by pulling back our diffeomorphism-modified flow (\ref{eq3.9}) by the diffeomorphisms $F_t$ of the preceding section.

\subsection{Proof of Theorem \ref{theorem1.2}}

\begin{proof}
Statement (a) follows from Theorem \ref{theorem1.1}. By Propositions \ref{corollary4.4} and \ref{proposition5.1}, the combination $\lambda+1$ as defined in terms of equation (\ref{eq3.13}) remains bounded on any finite interval of time. Then by Proposition \ref{proposition6.4}, so does the combination $\kappa+1$ with $\kappa$ given by (\ref{eq3.12}). But by equation (\ref{eq7.1}) and the unique evolution posited by Theorem \ref{theorem1.1} then the solution $g(t)$ of the corresponding normalized Ricci flow exists for all time, so statement (b.i) is proved. Moreover, the conclusion of Corollary \ref{corollary4.4} holds under pullback to the normalized Ricci flow, so (b.ii) is also proved. The decay estimate (b.iii) similarly follows from Propositions \ref{proposition5.3} and \ref{proposition6.5}.
\end{proof}

\subsection{Proof of Theorem \ref{theorem1.4}}

\begin{proof}
Pick any increasing, divergent sequence of times $\{ t_k\big \vert k=1,2,\dots ,\ t_1>0 \}$ along the flow, and set $g_k = g(t_k)$.  By Propositions \ref{proposition5.3} and \ref{proposition6.5} the sectional curvatures are uniformly bounded along any such sequence. By Corollary \ref{corollary2.4}, all covariant derivatives of the curvatures are also uniformly bounded.

Since $({\mathbb R}^n,{g}_{k})$ is rotationally symmetric, choose the origin of symmetry and call it $O_k$. There can be no geodesic loops through $O_k$ since geodesics through the origin of a rotationally symmetric manifold are necessarily radial, and radial geodesics for a rotationally symmetric metric on ${\mathbb R}^n$ do not close. Then $\inj_{O_k}=\conj_{O_k}$ for each $k$. But $\conj_{O_k}=\infty$, for if a point $p$ at distance $R>0$ from $O_k$ were conjugate to $O_k$, then by rotational symmetry the sphere of radius $R$ about $O_k$ would consist entirely of points conjugate to $O_k$, and no geodesic through $O_k$ could minimize beyond this sphere, contradicting the Hopf-Rinow theorem on ${\mathbb R}^n$.

Thus by \cite[Theorem 2.3]{Hamilton}, we can find a convergent subsequence. Furthermore, by Propositions \ref{proposition5.3} and \ref{proposition6.5} the limit metric must have constant sectional curvature $-1$.

\end{proof}

\end{document}